\newcommand{\newrefformat}[2]{}
\newcommand{\cn}{\colon}
\crefname{lem}{Lemma}{Lemmas}
\crefname{thm}{Theorem}{Theorems}
\crefname{Defi}{Definition}{Definitions}
\crefname{nota}{Notation}{Notations}
\crefname{construction}{Construction}{Constructions}
\crefname{prop}{Proposition}{Propositions}
\crefname{rem}{Remark}{Remarks}
\crefname{cor}{Corollary}{Corollaries}
\crefname{scholium}{Scholium}{Scholia}
\crefname{figure}{Figure}{Figures}
\crefname{equation}{Equation}{Equations}
\crefname{eq}{Equation}{Equations}
\crefname{eqn}{Equation}{Equations}
\newenvironment{eqn}{\begin{equation}}{\end{equation}}
\newenvironment{eq*}{\begin{equation*}}{\end{equation*}}
\newenvironment{eqn*}{\begin{equation*}}{\end{equation*}}
\newcommand{\mb}{\mathbf}
\renewcommand{\SS}{\mathcal{S}}
\newcommand{\colim}{\textrm{colim }}
\def\srarrow{\relbar\joinrel\mapstochar\joinrel\rightarrow}
\newcommand{\id}{\ensuremath{\textnormal{id}}\xspace}
\begin{document}
\numberwithin{equation}{section}
\newtheorem{thm}[equation]{Theorem}
\newtheorem{prop}[equation]{Proposition}
\newtheorem{lem}[equation]{Lemma}
\newtheorem{cor}[equation]{Corollary}

\newtheoremstyle{example}{\topsep}{\topsep}%
     {}
     {}
     {\bfseries}
     {.}
     {2pt}
     {\thmname{#1}\thmnumber{ #2}\thmnote{ #3}}

   \theoremstyle{example}
   \newtheorem{nota}[equation]{Notation}
   \newtheorem{example}[equation]{Example}
   \newtheorem{Defi}[equation]{Definition}
   \newtheorem{rem}[equation]{Remark}
	\newtheorem{comment}[equation]{Comment}

   \title{Operads, tensor products, and the categorical Borel construction}

\author{Nick Gurski}
\address{
School of Mathematics and Statistics,
University of Sheffield,
Sheffield, UK, S3 7RH
}
\email{nick.gurski@sheffield.ac.uk}
\keywords{operad, monoidal category, coboundary category, club, Borel construction}
\subjclass[2010]{18D50, 18D05, 18D10, 16D90, 55N91}

\begin{abstract}
We show that every action operad gives rise to a notion of monoidal category via the categorical version of the Borel construction, embedding action operads into the category of 2-monads on $\mathbf{Cat}$.  We characterize those 2-monads in the image of this embedding, and as an example show that the theory of coboundary categories corresponds precisely to the operad of $n$-fruit cactus groups.  We finally define $\mathbf{\Lambda}$-multicategories for an action operad $\mathbf{\Lambda}$, and show that they arise as monads in a Kleisli bicategory.
\end{abstract}

\maketitle

\tableofcontents
	
\section*{Introduction}

Categories of interest are often monoidal: sets, topological spaces, and vector spaces are all symmetric monoidal, while the category of finite ordinals (under ordinal sum) is merely monoidal.  But other categories have more exotic monoidal structures.  The first such type of structure discovered was that of a braided monoidal category.  These arise in categories whose morphisms have a geometric flavor like cobordisms embedded in some ambient space \cite{js}, in  categories produced from double loop spaces \cite{fie-br}, and categories of representations over objects like quasitriangular (or braided) bialgebras \cite{street-quantum} .  Another such exotic monoidal structure is that of a coboundary category, arising in examples from the representation theory of quantum groups \cite{drin-quasihopf}.

Going back to the original work of May on iterated loop spaces \cite{maygeom}, operads were defined in both symmetric and nonsymmetric varieties.  But Fiedorowicz's work on double loop spaces \cite{fie-br} showed that there was utility in considering another kind of operad, this time with braid group actions instead of symmetric group actions.  There is a clear parallel between these definitions of different types of operads and the definitions of different kinds of monoidal category, with each given by some general schema in which varying an $\mathbb{N}$-indexed collection of groups produced the types of operads or monoidal categories seen in nature.  Building on the work in \cite{cg}, the goal of this paper is to show that this parallel can be upgraded from an intuition to precise mathematics using the notion of action operad.

An action operad $\mb{\Lambda}$ is an operad which incorporates all of the essential features of the operad of symmetric groups.  Thus $\Lambda(n)$ is no longer just a set, but instead also has a group structure together with a map $\pi_{n}:\Lambda(n) \to \Sigma_{n}$.  Operadic composition then satisfies an additional equivariance condition using the maps $\pi_n$ and the group structures.  Each action operad $\mb{\Lambda}$ produces a notion of $\mb{\Lambda}$-operad which encodes equivariance conditions using both the groups $\Lambda(n)$ and the maps $\pi_n$.  Examples include the symmetric groups, the terminal groups (giving nonsymmetric operads), the braid groups (giving braided operads), and the $n$-fruit cactus groups \cite{hk-cobound} (giving a new notion of operad one might call cactus operads).  Using a formula resembling the classical Borel construction for spaces with a group action, we can produce from any action operad $\mb{\Lambda}$ a notion of $\mb{\Lambda}$-monoidal category, in which the group $\Lambda(n)$ acts naturally on $n$-fold tensor powers of any object.  Thus the categorical Borel construction embeds action operads into a category of monads on $\mb{Cat}$, and we characterize the image of this embedding as those monads describing monoidal structures of a precise kind.

The paper is organized into the following sections.  Section 1 reviews the definition of an action operad, and defines the categorical Borel construction on them.  The key result, which reappears in proofs throughout the paper, is \cref{thm:charAOp}, characterizing action operads in terms of two new operations mimicking the block sum of permutations and the operation which takes a permutation of $n$ letters and produces a new permutation on $k_1 + k_2 + \cdots + k_n$ letters by permuting the blocks of $k_i$ letters.  In Section 2, we use this characterization and Kelly's theory of clubs \cite{kelly_club1, kelly_club0, kelly_club2} to embed action operads into monads on $\mb{Cat}$ and determine the essential image of this embedding.  Section 3 gives a construction of the free action operad from a suitable collection of data, and relates this to how clubs can be described using generators and relations.  The results of Sections 2 and 3 show that the definitions of symmetric monoidal category or coboundary category, for example, correspond to the action operad constructed from the corresponding free symmetric monoidal or coboundary category on one object; these and other examples appear in detail in Section 4.  Section 5 then extends the definition of $\mb{\Lambda}$-operad to that of $\mb{\Lambda}$-multicategory and shows that these arise abstractly via a Kleisli construction.

The author would like to thank Alex Corner and Ed Prior for conversations contributing to this research.

This research was supported by EPSRC 134023.

\section{The Borel construction for action operads}

The clasical Borel construction is a functor from $G$-spaces to spaces, sending a $G$-space $X$ to $EG \times_{G} X$.  Our goal in this section is to use the formal description of the Borel construction to construct some special operads in $\mb{Cat}$.   We start by reviewing the analogues of the functors $E, B:\mb{Grp} \rightarrow \mb{Top}$ now taking values in $\mb{Cat}$.

\begin{Defi}\label{Defi:e_b}
\begin{enumerate}
\item Let $X$ be a set.  We define the \textit{translation category} $EX$ to have objects the elements of $X$ and morphisms consisting of a unique isomorphism between any two objects.
\item Let $G$ be a group.  The category $BG$ has a single object $*$, and hom-set $BG(*,*) = G$ with composition and identity given by multiplication and the unit element in the group, respectively.
\end{enumerate}
\end{Defi}

\begin{Defi}
A functor $F:X \to Y$ is an \emph{isofibration} if given $x \in X$ and an isomorphism $f: y \cong F(x)$ in $Y$, there is an isomorphism $g$ in $X$ such that $F(g) = f$.
\end{Defi}

\begin{prop}
There is a natural transformation $p:EU \Rightarrow B$, where $U$ is the underlying set of a group, which is pointwise an isofibration.  Applying the classifying space functor to the component $p_{G}$ gives a universal principal $G$-bundle.
\end{prop}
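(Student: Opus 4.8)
The plan is to handle the two assertions separately. For the natural transformation, I take $EU = E\circ U$ and $B$ to be the evident functors $\mathbf{Grp}\to\mathbf{Cat}$ and define the component at $G$ to be the functor $p_G\colon E(UG)\to BG$ sending every object to the unique object $*$ of $BG$ and sending the (necessarily unique) morphism $g\to g'$ of $E(UG)$ to the element $g'g^{-1}\in G = BG(*,*)$. Since composition and the identity in $BG$ are multiplication and the unit (\cref{Defi:e_b}), functoriality of $p_G$ is exactly the pair of identities $(g''g'^{-1})(g'g^{-1}) = g''g^{-1}$ and $gg^{-1}=1$. Naturality in $G$ is the identity $\phi(g'g^{-1})=\phi(g')\phi(g)^{-1}$ for a homomorphism $\phi\colon G\to H$: both $p_H\circ E(U\phi)$ and $B\phi\circ p_G$ send the morphism $g\to g'$ to $\phi(g')\phi(g)^{-1}$. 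For the isofibration property, let $g$ be an object of $E(UG)$ and $f\colon y\cong p_G(g)=*$ an isomorphism of $BG$; then $y=*$ and $f$ is an arbitrary element of $G$, and the unique morphism $f^{-1}g\to g$ of $E(UG)$ is an isomorphism whose $p_G$-image is $g(f^{-1}g)^{-1}=f$, so isomorphisms lift. (If one prefers the opposite convention for multiplication in $BG$, label the morphism $g\to g'$ by $g^{-1}g'$; the verifications are identical.)

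For the second assertion, read ``classifying space functor'' as the nerve followed by geometric realization, $|N(-)|$. The category $E(UG)$ is the chaotic category on the nonempty set $UG$, hence equivalent to the terminal category, so $|N E(UG)|$ is contractible; concretely $N E(UG)$ is the simplicial set $n\mapsto (UG)^{n+1}$ with faces and degeneracies given by deleting and repeating entries, which is the classical simplicial model for the total space of a universal bundle. The nerve of $BG$ is the bar construction $n\mapsto G^{n}$, and under these descriptions $N(p_G)$ is the standard map $(g_0,\dots,g_n)\mapsto (g_1g_0^{-1},\dots,g_ng_{n-1}^{-1})$. Let $G$ act on $UG$ by right translation; this is a free action by bijections, so it induces a strict action of $G$ on the category $E(UG)$, and on nerves it is the diagonal right-translation action, which is levelwise free. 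The map $N(p_G)$ is invariant for this action (each $g_ig_{i-1}^{-1}$ is unchanged) and exhibits $n\mapsto G^n$ as the levelwise quotient, so $N(p_G)$ is the quotient map $N E(UG)\to (N E(UG))/G\cong N(BG)$. Since geometric realization commutes with such quotients, $|N(p_G)|\colon |N E(UG)|\to |N(BG)|$ is the quotient of a free $G$-CW complex by $G$ (which is discrete), hence a principal $G$-bundle; as its total space $|N E(UG)|$ is contractible and its base $|N(BG)|$ is a CW complex, it is a universal principal $G$-bundle.

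The only delicate points here are conventional: one must fix a labelling of the morphisms of $E(UG)$ by elements of $G$ and a handedness for the $G$-action on $UG$ so that $p_G$ is at once a functor, natural in $G$, and literally the quotient by the action (the choices above do this), and on the topological side one invokes the standard facts that geometric realization carries a levelwise-free simplicial $G$-set to a free $G$-CW complex with quotient the realization of the quotient, and that a principal $G$-bundle with contractible total space over a CW base is universal (see, e.g., Milnor, or May's treatment of classifying spaces). I expect the step requiring the most care to be the identification of $N E(UG)$, together with its $G$-action and the map $N(p_G)$, as the classical pair $EG_\bullet\to N(BG)$; once the conventions above are pinned down this is entirely routine.
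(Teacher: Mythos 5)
Your proposal is correct and follows essentially the same route as the paper: the paper defines $p_G$ by exactly the same formula (the unique isomorphism $g\to h$ goes to $hg^{-1}$) and then asserts that the isofibration property and the identification of $|p_G|$ with the standard universal bundle are direct checks, which is precisely what you carry out in detail via the nerve, the bar construction, and the free right-translation action.
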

\begin{proof}
Given a group $G$, $p_{G}:EUG \rightarrow BG$ sends every object of $EUG$ to the unique object of $BG$.  The unique isomorphism $g \rightarrow  h$ in $EUG$ is mapped to $hg^{-1}:* \rightarrow *$.  It is easy to directly check that this is an isofibration, as well as to see that the classifying spaces of $EUG$ and $BG$ are the spaces classically known as $EG,BG$, with $|p_{G}|$ being the standard universal principal $G$-bundle.
\end{proof}

We will also need the functors $E, B$ defined for more than just a single set or group, in particular for the sets or groups which make up an operad and are indexed by the natural numbers.

\begin{nota}\label{nota:e_b}
Let $S$ be a set which we view as a discrete category.
\begin{enumerate}
\item For any functor $F:S \rightarrow \mb{Sets}$, let $EF$ denote the composite $E \circ F:S \rightarrow \mb{Cat}$; we often view $F$ as an indexed set $\{ F(s) \}$, in which case $EF$ is the indexed category $\{ EF(s) \}$.
\item For any functor $F:S \rightarrow \mb{Grp}$, let $BF$ denote the composite $B \circ F:S \rightarrow \mb{Cat}$; we often view $F$ as an indexed group $\{ F(s) \}$, in which case $BF$ is the indexed category $\{ BF(s) \}$.
\end{enumerate}
\end{nota}

The following lemma is a straightforward verification.

\begin{lem}\label{symmoncor}
The functor $E:\mb{Sets} \rightarrow \mb{Cat}$ is right adjoint to the set of objects functor.  Therefore $E$ preserves all limits, and in particular is a symmetric monoidal functor when both categories are equipped with their cartesian monoidal structures.
\end{lem}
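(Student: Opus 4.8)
The plan is to prove the adjunction directly and then read off the two stated consequences formally. Write $\mathrm{ob}\colon \mathbf{Cat}\to\mathbf{Sets}$ for the set-of-objects functor. I claim $\mathrm{ob}\dashv E$, which I will establish by exhibiting a bijection
$\mathbf{Cat}(\mathcal{C},EX)\cong\mathbf{Sets}(\mathrm{ob}\,\mathcal{C},X)$
natural in the category $\mathcal{C}$ and the set $X$; everything else then follows from general nonsense.

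To build the bijection: given a functor $F\colon\mathcal{C}\to EX$, restricting to objects yields a function $\mathrm{ob}\,\mathcal{C}\to\mathrm{ob}\,EX=X$, and this is the forward map. Conversely, a function $f\colon\mathrm{ob}\,\mathcal{C}\to X$ extends to a functor $\bar f\colon\mathcal{C}\to EX$ which agrees with $f$ on objects and sends a morphism $u\colon c\to c'$ to the unique morphism $f(c)\to f(c')$ of $EX$, which exists and is unique by \cref{Defi:e_b}. The only thing to check — and it is immediate — is that $\bar f$ genuinely is a functor: it preserves identities and composites because any two parallel morphisms of $EX$ coincide, so the required equations hold automatically. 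These two assignments are mutually inverse precisely because a functor with codomain $EX$ is completely determined on morphisms by its effect on objects, and naturality in $\mathcal{C}$ (along functors) and in $X$ (along functions of sets) is a routine diagram chase, both sides seeing only the object-level data. This gives $\mathrm{ob}\dashv E$.

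The consequences are then formal. A right adjoint preserves all limits that exist, so $E$ preserves all limits; in particular $E$ preserves the terminal object and binary products, i.e.\ the canonical comparison maps $E(1)\to 1$ and $E(X\times Y)\to EX\times EY$ in $\mathbf{Cat}$ are isomorphisms. By the standard fact that any finite-product-preserving functor between cartesian monoidal categories is canonically a (strong) symmetric monoidal functor — with structure constraints the inverses of these canonical comparisons and all coherence axioms automatic from the universal property of products — it follows that $E$ is symmetric monoidal for the cartesian structures on $\mathbf{Sets}$ and $\mathbf{Cat}$. Since every step is an unwinding of definitions, there is no real obstacle; the one place demanding a moment's care is verifying that the morphism assignment of a functor into $EX$ is simultaneously forced by, and automatically compatible with, its object assignment, which is exactly where the defining property of $EX$ (a unique isomorphism between any two objects) is used.
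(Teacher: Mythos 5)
Your proof is correct and is exactly the ``straightforward verification'' the paper alludes to without writing out: the unique-isomorphism property of $EX$ forces the natural bijection $\mathbf{Cat}(\mathcal{C},EX)\cong\mathbf{Sets}(\mathrm{ob}\,\mathcal{C},X)$, and the limit-preservation and symmetric monoidality claims follow formally just as you say. Nothing is missing.
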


We now recall the definitions of an action operad $\mb{\Lambda}$ from \cite{cg}.

\begin{Defi}\label{Defi:aop}
An \textit{action operad} $\mb{\Lambda}$ consists of
\begin{itemize}
\item an operad $\Lambda = \{ \Lambda(n) \}$ in the category of sets such that each $\Lambda(n)$ is equipped with the structure of a group and
\item a map $\pi:\Lambda \rightarrow \Sigma$ which is simultaneously a map of operads and a group homomorphism $\pi_{n}:\Lambda(n) \rightarrow \Sigma_{n}$ for each $n$
\end{itemize}
such that one additional axiom holds.  Write
\[
\mu: \Lambda(n) \times \Lambda(k_{1}) \times \cdots \times \Lambda(k_{n}) \rightarrow \Lambda(k_{1} + \cdots + k_{n})
\]
for the multiplication in the operad $\Lambda$.  Let $(g; f_1, \ldots f_n)$ be an element of the product $\Lambda(n) \times \Lambda(k_{\pi(g')^{-1}(1)}) \times \cdots \times \Lambda(k_{\pi(g')^{-1}(1)})$ and $(g'; f_1', \ldots, f_n')$ be an element of the product $\Lambda(n) \times \Lambda(k_{1}) \times \cdots \times \Lambda(k_{n})$.  We require that
\begin{eqn}\label{eqn:ao_axiom}
\mu(g; f_1, \ldots f_n)  \mu(g'; f_1', \ldots, f_n') = \mu (gg'; f_{\pi(g')(1)}f_{1}', \ldots, f_{\pi(g')(n)}f_{n}')
\end{eqn}in the group $\Lambda(k_{1} + \cdots + k_{n})$.
\end{Defi}

\begin{example}
\begin{enumerate}
\item The terminal operad $T$ in the category of sets has a unique action operad structure, $\mathbf{T}$.  Since $T(n)$ is a singleton for each $n$, the group structure is unique as is the map $\pi$.  The single action operad axiom is then automatic as both sides of \cref{eqn:ao_axiom} are the unique element which happens to be the identity.  This is the initial object in the category of action operads.
\item The symmetric operad $\Sigma$ has a canonical action operad structure.  It is given by taking $\pi$ to be the identity map, and this action operad will be denoted $\mathbf{\Sigma}$.  This is the terminal object in the category of action operads.
\item The operad of braid groups also has an obvious action operad structure in which the map $\pi$ is the group homomorphism sending a braid to its underlying permutation.  We will denote this action operad by $\mathbf{Br}$.  The operad of ribbon braids has an action operad structure, essentially using the same map $\pi$, and was studied by Wahl in \cite{wahl-thesis}.
\item The operad of $n$-fruit cactus groups defined by Henriques and Kamnitzer in \cite{hk-cobound} has an action operad structure that we will discuss in Section 4.
\end{enumerate}
\end{example}

Note that the operad of symmetric groups $\Sigma$ has its action operad structure determined by two auxilliary operations.  The first is the block sum of permutations which we denote by
\[
\beta: \Sigma_{k_{1}} \times \cdots \times \Sigma_{k_{n}} \rightarrow \Sigma_{\underline{k}},
\]
where $\underline{k} = \sum k_{i}$.  The second is a kind of diagonal map which is defined for any natural number $n$ together with natural numbers $k_{1}, \ldots, k_{n}$.  Then
\[
\delta = \delta_{n; k_{1}, \ldots, k_{n}}:\Sigma_{n} \rightarrow \Sigma_{\underline{k}},
\]
is defined on $\sigma \in \Sigma_{n}$ by permuting the elements $1, 2, \ldots, k_{1}$ together in a block according to the action of $\sigma \in \Sigma_{n}$ on $1$, then $k_{1}+1, \ldots, k_{1}+k_{2}$ together in a block according to the action of $\sigma$ on $2$, and so on.  The first of these, $\beta$, is a group homomorphism, while $\delta$ is a sort of twisted homomorphism, and taken together they define operadic multiplication in $\Sigma$.  We now use these ideas to give the following algebraic characterization of action operads.

\begin{nota}
We will denote our identity elements in groups generically as $e$. If $\{ G_{i} \}_{i \in I}$ are groups indexed by a set $I$, then $e_{i}$ is the identity element in $G_{i}$.
\end{nota}

\begin{thm}\label{thm:charAOp}
An action operad $\mb{\Lambda}$ determines, and is determined by, the following:
\begin{itemize}
\item groups $\Lambda(n)$ together with group homomorphisms $\pi_{n}:\Lambda(n) \rightarrow \Sigma_{n}$,
\item a group homomorphism
\[
\begin{array}{rcl}
\Lambda(k_{1}) \times \cdots \times \Lambda(k_{n}) & \stackrel{\beta}{\longrightarrow} & \Lambda(k_{1} + \cdots + k_{n}),
\end{array}
\]
for each $k_{1}, \ldots, k_{n}$, and
\item a function of sets
\[
\begin{array}{rcl}
\Lambda(n) & \stackrel{\delta_{n; k_{1}, \ldots, k_{n}}}{\longrightarrow} & \Lambda(k_{1} + \cdots + k_{n})
\end{array}
\]
for each $n, k_{1}, \ldots, k_{n}$,
\end{itemize}
subject to the following axioms.
\begin{enumerate}
\item\label{eq1} The homomorphisms $\beta$ are natural with respect to the maps $\pi_{n}$, where $\underline{k} = k_{1} + \cdots + k_{n}$.
\[
\xy
(0,0)*+{\Lambda(k_{1}) \times \cdots \times \Lambda(k_{n}) } ="00";
(0,-15)*+{\Sigma_{k_{1}} \times \cdots \times \Sigma_{k_{n}}  } ="01";
(40,0)*+{\Lambda(\underline{k}) } ="20";
(40,-15)*+{\Sigma_{\underline{k}} } ="21";
{\ar^{\beta} "00" ; "20"};
{\ar^{\pi} "20" ; "21"};
{\ar_{\pi_1 \times \cdots \times \pi_n} "00" ; "01"};
{\ar_{\beta} "01" ; "21"};
\endxy
\]
\item\label{eq2} The homomorphism $\beta:\Lambda(k) \to \Lambda(k)$ is the identity.
\item\label{eq3} The homomorphisms $\beta$ are associative in the sense that
\[
\beta(k_{11}, \ldots, k_{1j_{1}}, k_{21}, \ldots, k_{2j_{2}}, \ldots, k_{nj_{n}}) = \beta\big( \beta(k_{11}, \ldots, k_{1j_{1}}), \ldots, \beta(k_{n1}, \ldots, k_{nj_{n}}) \big)
\]
holds.
\item\label{eq4} The functions $\delta_{n; k_{1}, \ldots, k_{n}}$ are natural with respect to the maps $\pi_{n}$.
\[
\xy
(0,0)*+{\Lambda(n)} ="00";
(40,0)*+{\Lambda(k_{1} + \cdots + k_{n}) } ="20";
(0,-15)*+{\Sigma_{n}  } ="01";
(40,-15)*+{\Sigma_{k_{1} + \cdots + k_{n}} } ="21";
{\ar^{\delta} "00" ; "20"};
{\ar^{\pi} "20" ; "21"};
{\ar_{\pi} "00" ; "01"};
{\ar_{\delta} "01" ; "21"};
\endxy
\]
\item\label{eq5} The functions $\delta_{n; 1, \ldots, 1}, \, \delta_{1;n} : \Lambda(n) \to \Lambda(n)$ are the identity.
\item\label{eq6} The equation $\delta_{n; k_{i}}(g) \delta_{n; j_{i}}(h) = \delta_{n; j_{i}}(gh)$ holds when
\[
k_{i} = j_{\pi(h)^{-1}(i)}.
\]
\item\label{eq7} The functions $\delta$ are associative in the sense that
\[
\delta_{m_1 + \cdots + m_n; p_{11}, \ldots, p_{1m_{1}}, p_{21}, \ldots, p_{nm_{m}}}\big( \delta_{n; m_{1}, \ldots, m_{n}}(f) \big) = \delta_{n; P_{1}, \ldots, P_{n}}(f)
\]
where $P_{i} = p_{i1} + \cdots + p_{im_{i}}$.
\item\label{eq8} $\delta(g) \beta(h_{1}, \ldots, h_{n}) = \beta(h_{\pi(g)^{-1}(1)}, \ldots,  h_{\pi(g)^{-1}(n)}) \delta(g)$, where $h_{i} \in \Lambda(k_{i})$ and $\delta:\Lambda(n) \rightarrow \Lambda(k_{1} + \cdots + k_{n})$.
\item\label{eq9} The equation
\[
\beta(\delta_{1}(g_{1}), \ldots, \delta_{n}(g_{n})) = \delta_{c}(\beta(g_{1}, \ldots, g_{n}))
\]
holds, where $\delta_{i}(g_{i})$ is shorthand for $\delta_{k_{i}; m_{i1}, \ldots, m_{ik_{i}}}(g_{i})$ and $\delta_{c}$ is shorthand for
\[
\delta_{k_{1}+\cdots + k_{n}; m_{11}, m_{12}, \ldots, m_{1k_{1}}, m_{21}, \ldots, m_{nk_{n}}}.
\]
\end{enumerate}
\end{thm}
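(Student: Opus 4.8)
The plan is to make the correspondence completely explicit, using the idea from the discussion above that operadic composition must factor as a block‑permutation map followed by a block‑sum map. Concretely, an action operad $\mb{\Lambda}$ with composition $\mu$ gives rise to the new data by
\[
\delta_{n;k_1,\dots,k_n}(g) = \mu(g;e,\dots,e), \qquad \beta(h_1,\dots,h_n) = \mu(e_n;h_1,\dots,h_n),
\]
and conversely data $(\beta,\delta)$ satisfying \eqref{eq1}--\eqref{eq9} gives a composition by $\mu(g;f_1,\dots,f_n) = \delta_{n;k_1,\dots,k_n}(g)\,\beta(f_1,\dots,f_n)$. So the proof breaks into (a) extracting $\beta,\delta$ from $\mb{\Lambda}$ and verifying \eqref{eq1}--\eqref{eq9}; (b) building $\mu$ from $(\beta,\delta)$ and verifying the action operad axioms; (c) checking that (a) and (b) are mutually inverse.

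In direction (a) I would first record the factorization $\mu(g;f_\bullet) = \delta(g)\,\beta(f_\bullet)$: this drops out of \eqref{eqn:ao_axiom} applied with $g' = e_n$ (so that $\pi(g') = e$ and no reindexing happens), which reads $\mu(g;f_\bullet)\mu(e_n;f'_\bullet) = \mu(g;f_1f'_1,\dots,f_nf'_n)$, upon setting $f_i = e$; the same identity with $g = e_n$ shows $\beta$ is a group homomorphism. After that, \eqref{eq2} and \eqref{eq5} come from the unit axioms for $\mu$; the naturality squares \eqref{eq1} and \eqref{eq4} come from $\pi$ being a map of operads (together with the fact that the block‑sum and block‑permutation maps of $\Sigma$ are themselves $\mu^{\Sigma}$ with identities inserted); \eqref{eq3}, \eqref{eq7}, \eqref{eq9} are instances of associativity of $\mu$ with suitable inputs set to identities; and \eqref{eq6}, \eqref{eq8} are two further instances of \eqref{eqn:ao_axiom} --- the former with all $f_i, f'_i$ identities, the latter expressing that $\mu(g;h_\bullet)$ can be written as $\delta(g)$ times a block sum and also as a block sum times $\delta(g)$, the two block sums differing by $\pi(g)$.

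In direction (b), with $\mu(g;f_\bullet) = \delta(g)\beta(f_\bullet)$ and operad unit $e_1 \in \Lambda(1)$: the unit laws follow from \eqref{eq2}, \eqref{eq5} and the fact that $\beta$, being a homomorphism, preserves identities; associativity of $\mu$ is the main computation, in which one expands both iterated composites and then uses \eqref{eq6} to split $\delta$ applied to a product, \eqref{eq9} to rewrite $\delta$ applied to a block sum, \eqref{eq8} to commute block sums past block permutations, and \eqref{eq3}, \eqref{eq7} for the separate associativities of $\beta$ and of $\delta$; that $\pi$ is an operad map follows from \eqref{eq1} and \eqref{eq4}; and \eqref{eqn:ao_axiom} itself is obtained by expanding $\mu(g;f_\bullet)\mu(g';f'_\bullet) = \delta(g)\beta(f_\bullet)\delta(g')\beta(f'_\bullet)$, commuting $\beta(f_\bullet)$ past $\delta(g')$ via \eqref{eq8} (which introduces precisely the permutation $\pi(g')$ appearing on the right‑hand side of \eqref{eqn:ao_axiom}), replacing $\delta(g)\delta(g')$ by $\delta(gg')$ via \eqref{eq6} (whose arity hypothesis matches the bookkeeping in \eqref{eqn:ao_axiom}), and merging the two resulting block sums by the homomorphism law. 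For (c): the factorization from (a) shows the recovered $\beta,\delta$ return the original $\mu$, and feeding identities into $\mu = \delta(-)\beta(-)$ recovers $\beta$ and $\delta$ by the unit laws just checked.

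The step I expect to be the real work is the index bookkeeping rather than any conceptual point: every use of \eqref{eq8} reindexes the arguments of a block sum by some $\pi(g)^{\pm1}$, and every use of \eqref{eq3} or \eqref{eq7} regroups a doubly‑indexed family of block sizes, so in the associativity verification and in the derivation of \eqref{eqn:ao_axiom} one must check that these nested reindexings line up on the nose with the arity conventions hard‑coded into \eqref{eqn:ao_axiom} and \eqref{eq6}--\eqref{eq9}. Each individual identity invoked is either an operad axiom, a unit law, or one of \eqref{eq1}--\eqref{eq9}, so once the indices are tracked carefully the verification goes through.
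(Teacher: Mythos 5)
Your proposal is correct and follows essentially the same route as the paper: the same formulas $\delta(g)=\mu(g;e,\dots,e)$, $\beta(h_\bullet)=\mu(e;h_\bullet)$ and $\mu(g;f_\bullet)=\delta(g)\beta(f_\bullet)$, with the same attribution of axioms \eqref{eq1}--\eqref{eq9} to the unit, associativity, operad-map, and equivariance axioms, and the same use of \eqref{eq8} then \eqref{eq6} to derive \cref{eqn:ao_axiom}. (The only slight discrepancy is that \eqref{eq8} is not actually needed in the associativity verification, since $\beta$ being a homomorphism already separates the factors; otherwise the index bookkeeping you flag is exactly where the paper's proof spends its effort, via \eqref{eq1}, \eqref{eq6}, and \eqref{eq7}.)
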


\begin{proof}
For an action operad $\mb{\Lambda}$, define
\[
\begin{array}{rcl}
\beta(h_{1}, \ldots, h_{n}) &=& \mu(e; h_{1}, \ldots, h_{n}) \\
\delta(g) &=& \mu(g; e_{1}, \ldots, e_{n}).
\end{array}
\]
Since $\pi:\Lambda \rightarrow \Sigma$ is an operad map, axioms \eqref{eq1} and \eqref{eq4} hold.  Since $\Lambda$ is an operad of sets, axioms \eqref{eq2} and \eqref{eq5} follow from the operad unit axioms, and axioms \eqref{eq3}, \eqref{eq7}, and \eqref{eq9} follow from the operad associativity axiom.  Axioms \eqref{eq6} and \eqref{eq8} are special cases of the additional action operad axiom, as is the fact that $\beta$ is a group homomorphism.

Given the data above, we need only define the operad multiplication, verify the operad unit and multiplication axioms,  and finally check the action operad axiom.  Multiplication is given by
\[
\mu(g; h_{1}, \ldots, h_{n}) = \delta_{n; k_{1}, \ldots, k_{n}}(g) \beta(h_{1}, \ldots, h_{n})
\]
where $h_{i} \in \Lambda(k_{i})$.  The unit is $e \in \Lambda(1)$.

We now verify the operad unit axioms.
\[
\begin{array}{rcl}
\mu(e; g) & = & \delta(e)\beta(g) \\
& = & e \cdot g \\
& = & g
\end{array}
\]
\[
\begin{array}{rcl}
\mu(h; e, \ldots, e) & = & \delta(h)\beta(e, \ldots, e) \\
& = & h \cdot e \\
& = & h
\end{array}
\]
These follow from axioms \eqref{eq2} and \eqref{eq5}, together with the fact that $\beta$ is a group homomorphism.

For the operad associativity axiom, let
\begin{itemize}
\item $f \in \Lambda(m),$
\item $g_{i} \in \Lambda(n_{i})$ for $i=1, \ldots, m$, and
\item $h_{ij} \in \Lambda(p_{i,j})$ for $i=1, \ldots, m$ and $j=1, \ldots, n_{i}$.
\end{itemize}
Further, let $P_{i} = p_{i1} + \cdots + p_{in_{i}}$ and $\underline{h_{i\bullet}}$ denote the list $h_{i1}, h_{i2}, \ldots, h_{in_{i}}$.  We must then show that
\[
\mu\big( f; \mu(g_{1}; \underline{h_{1\bullet}}), \ldots, \mu(g_{m}; \underline{h_{m\bullet}}) \big) = \mu\big( \mu (f; g_{1}, \ldots, g_{m}); h_{11}, \ldots, h_{1n_{1}}, h_{21}, \ldots, h_{mn_{m}} \big).
\]
By definition, the left side of this equation is
\[
\delta_{m; P_{1}, \ldots, P_{m}}(f) \beta\big( \mu(g_{1}; \underline{h_{1\bullet}}), \ldots, \mu(g_{m}; \underline{h_{m\bullet}}) \big),
\]
and
\[
\mu(g_{i}; \underline{h_{i\bullet}}) = \delta_{n_{i}; p_{i1}, \ldots, p_{in_{i}}}(g_{i})\beta(h_{i1}, \ldots, h_{in_{i}}).
\]
Since $\beta$ is a group homomorphism, we can then rewrite the left side as
\[
\delta(f)\beta\big(\delta(g_{1}), \ldots, \delta(g_{m})\big)\beta\big(\beta(h_{1\bullet}), \ldots, \beta(h_{m\bullet})\big)
\]
where we have suppressed the subscripts on the $\delta$'s.  By axiom \eqref{eq3} , we have
\[
\beta\big(\beta(h_{1\bullet}), \ldots, \beta(h_{m\bullet})\big) = \beta(h_{11}, \ldots, h_{1n_{1}}, h_{21}, \ldots, h_{mn_{m}}).
\]
Further, axiom \eqref{eq9} above shows that
\[
\beta\big(\delta(g_{1}), \ldots, \delta(g_{m})\big) = \delta\big(\beta(g_{1}, \ldots, g_{m})\big).
\]
Thus we have shown that the left side of the operad associativity axiom is equal to
\[
\delta(f)\delta\big(\beta(g_{1}, \ldots, g_{m})\big)\beta(h_{11}, \ldots, h_{1n_{1}}, h_{21}, \ldots, h_{mn_{m}}).
\]
Now the right side is
\[
\mu\big( \mu (f; g_{1}, \ldots, g_{m}); h_{11}, \ldots, h_{1n_{1}}, h_{21}, \ldots, h_{mn_{m}} \big)
\]
which is by definition
\[
\delta\big(\mu (f; g_{1}, \ldots, g_{m})\big)\beta(h_{11}, \ldots, h_{1n_{1}}, h_{21}, \ldots, h_{mn_{m}} ).
\]
Thus verifying the operad associativity axiom reduces to showing
\begin{eqn}\label{eqn:opass}
\delta(f)\delta\big(\beta(g_{1}, \ldots, g_{m})\big) = \delta\big(\mu (f; g_{1}, \ldots, g_{m})\big).
\end{eqn}By the definition of $\mu$, we have
\[
\delta(\mu (f; g_{1}, \ldots, g_{m})) = \delta\big(\delta(f)\beta(g_{1}, \ldots, g_{m}) \big)
\]
which is itself equal to
\begin{eqn}\label{eqn:opass2}
\delta\big(\delta(f)\big) \delta\big(\beta(g_{1}, g_{m})\big)
\end{eqn}by axiom \eqref{eq6} above.  Now the $\delta(f)$ on the left side of \cref{eqn:opass} uses $\delta_{n; P_{1}, \ldots, P_{n}}$, while the $\delta(\delta(f))$ in \cref{eqn:opass2} is actually
\[
\delta_{m_1 + \cdots + m_{n}; q_{ij}}(\delta_{n; m_{1}, \ldots, m_{n}} (f))
\]
where the $q_{ij}$ are defined, by axiom \eqref{eq6}, to be given by
\[
q_{ij} = p_{i,\pi(g_{i})^{-1}(j)}
\]
using the compatibility of $\beta$ and $\pi$ in axiom \eqref{eq1}.  By axiom \eqref{eq7}, this is composite of $\delta$'s  is then $\delta_{n; Q_{1}, \ldots, Q_{n}}$ where $Q_{i} = q_{i1} + \cdots + q_{im_{i}}$.  But by the definition of the $q_{ij}$, we immediately see that $Q_{i} = P_{i}$, so the $\delta(f)$ in \cref{eqn:opass} is equal to the $\delta(\delta(f))$ appearing in \cref{eqn:opass2}, concluding the proof of the operad associativity axiom.

The action operad axiom is now the calculation below, and uses axioms \eqref{eq4} and \eqref{eq8}.
\begin{small}
\[
\begin{array}{rcl}
\mu(g; h_{1}, \ldots, h_{n})\mu(g'; h_{1}', \ldots, h_{n}') & = & \delta(g) \beta(h_{1}, \ldots, h_{n}) \delta(g') \beta(h_{1}', \ldots, h_{n}') \\
& = & \delta(g) \delta(g') \beta(h_{\pi(g')(1)}, \ldots, h_{\pi(g')(n)})  \beta(h_{1}', \ldots, h_{n}') \\
& = & \delta(gg') \beta(h_{\pi(g')(1)}h_{1}', \ldots, h_{\pi(g')(n)}h_{n}') \\
& = & \mu(gg'; h_{\pi(g')(1)}h_{1}', \ldots, h_{\pi(g')(n)}h_{n}')
\end{array}
\]
\end{small}
\end{proof}

\begin{Defi}\label{Defi:cat_aop}
The category $\mb{AOp}$ of action operads has
\begin{itemize}
\item objects which are action operads $\mb{\Lambda}$, and
\item morphisms $\mb{\Lambda} \rightarrow \mb{\Lambda'}$ which are those operad maps $f:\Lambda \rightarrow \Lambda'$ such that each $f_{n}:\Lambda(n) \rightarrow \Lambda'(n)$ is a group homomorphism and $\pi_{n}^{\Lambda} = \pi_{n}^{\Lambda'} \circ f_{n}$.
\end{itemize}
\end{Defi}

\begin{Defi}\label{Defi:lamop}
Let $\mb{\Lambda}$ be an action operad.  A \textit{$\mb{\Lambda}$-operad} $P$ (in $\mb{Sets}$) consists of
\begin{itemize}
\item an operad $P$ in $\mb{Sets}$ and
\item for each $n$, an action $P(n) \times \Lambda(n) \rightarrow P(n)$ of $\Lambda(n)$ on $P(n)$
\end{itemize}
such that the following two equivariance axioms hold for $P$.
\[
\begin{array}{rcl}
\mu^{P}(x; y_{1} \cdot g_{1}, \ldots, y_{n} \cdot g_{n}) & = & \mu^{P}(x; y_{1}, \ldots, y_{n}) \cdot \mu^{\Lambda}(e; g_{1}, \ldots, g_{n})  \\
\mu^{P}(x \cdot g; y_{1}, \ldots, y_{n}) &  = & \mu^{P}(x; y_{\pi(g)^{-1}(1)}, \ldots, y_{\pi(g)^{-1}(n)}) \cdot \mu^{\Lambda}(g; e_{1}, \ldots, e_{n})
\end{array}
\]
\end{Defi}

We are additionally interested in $\mb{\Lambda}$-operads in $\mb{Cat}$ (or other cocomplete symmetric monoidal categories in which tensoring with a fixed object preserves colimits).  While the definition above gives the correct notion of a $\mb{\Lambda}$-operad in $\mb{Cat}$ if we interpret the two equivariance axioms to hold for both objects and morphisms, it is useful to give a purely diagrammatic expression of these axioms.  In the diagrams below, expressions of the form $G \times C$ for a group $G$ and category $C$ mean that the group $G$ is to be treated as a discrete category.  This follows that standard method of how one expresses group actions in categories other than $\mb{Sets}$ using a copower.  Thus the diagrams below are the two equivariance axioms given in \cref{Defi:lamop} expressed diagrammatically.

\[
\xy
(0,0)*+{\scriptstyle P(n) \times P(k_{1}) \times \cdots \times P(k_{n}) \times \Lambda(k_{1}) \times \cdots \times \Lambda(k_{n}) } ="00";
(0,-15)*+{\scriptstyle P(\underline{k}) \times \Lambda(\underline{k}) } ="01";
(70,0)*+{\scriptstyle P(n) \times P(k_{1}) \times \Lambda(k_{1}) \times \cdots \times P(k_{n}) \times  \Lambda(k_{n}) } ="20";
(70,-15)*+{\scriptstyle P(n) \times P(k_{1}) \times \cdots \times P(k_{n}) } ="21";
(35, -25)*+{\scriptstyle P(\underline{k}) } ="12";
{\ar^{\cong} "00" ; "20"};
{\ar^{\alpha_{k_1} \times \cdots \times \alpha_{k_n}} "20" ; "21"};
{\ar^{\mu^P} "21" ; "12"};
{\ar_{\mu^P \times \mu^\Lambda(e;-)} "00" ; "01"};
{\ar_{\alpha_{\underline{k}}} "01" ; "12"};
\endxy
\]
\[
\xy
(0,0)*+{\scriptstyle P(n) \times \Lambda(n) \times P(k_{1}) \times \cdots \times P(k_{n}) } ="00";
(0,-10)*+{\scriptstyle P(n) \times \Lambda(n) \times \Lambda(n) \times P(k_{1}) \times \cdots \times P(k_{n}) } ="01";
(0,-20)*+{\scriptstyle P(n) \times \Lambda(n) \times P(k_{1}) \times \cdots \times P(k_{n}) \times \Lambda(n) } ="02";
(0,-30)*+{\scriptstyle P(n) \times \Sigma_{n} \times P(k_{1}) \times \cdots \times P(k_{n}) \times \Lambda(n) } ="03";
(55,-30)*+{\scriptstyle P(\underline{k}) \times \Lambda(\underline{k}) } ="13";
(70,0)*+{\scriptstyle P(n) \times P(k_{1}) \times \cdots \times P(k_{n}) } ="20";
(70,-18)*+{\scriptstyle P(\underline{k}) } ="21";
{\ar_{1 \times \Delta \times 1} "00" ; "01"};
{\ar^{\cong} "01" ; "02"};
{\ar_{1 \times \pi_{n} \times 1} "02" ; "03"};
{\ar^{} "03" ; "13"};
(35,-33)*{\scriptstyle \tilde{\mu}^P \times \mu^\Lambda(-;\underline{e})};
{\ar_{\alpha_{\underline{k}}} "13" ; "21"};
{\ar^{\alpha_{n} \times 1} "00" ; "20"};
{\ar^{\mu^P} "20" ; "21"};
\endxy
\]

\begin{Defi}\label{Defi:actop_to_cat}
Let $\mb{\Lambda}$ be an action operad.  Then $B\mb{\Lambda}$ (see \cref{nota:e_b}) is the category with objects the natural numbers and
\[
B\mb{\Lambda}(m,n) = \left\{ \begin{array}{lc}
\Lambda(n), & m = n \\
\varnothing, & m \neq n,
\end{array} \right.
\]
where composition is given by group multiplication and the identity morphism is the unit element $e_n \in \Lambda(n)$.
\end{Defi}

\begin{thm}\label{preserveGop}
Let $M,N$ be cocomplete symmetric monoidal categories in which the tensor product preserves colimits in each variable, and let $F:M \rightarrow N$ be a symmetric lax monoidal functor with unit constraint $\varphi_{0}$ and tensor constraint $\varphi_{2}$.  Let $\mb{\Lambda}$ be an action operad, and $P$ a $\mb{\Lambda}$-operad in $M$.  Then $FP = \{ F(P(n)) \}$ has a canonical $\mb{\Lambda}$-operad structure, giving a functor
\[
\mb{\Lambda}\mbox{-}Op(M) \rightarrow \mb{\Lambda}\mbox{-}Op(N)
\]
from the category of $\mb{\Lambda}$-operads in $M$ to the category of $\mb{\Lambda}$-operads in $N$.
\end{thm}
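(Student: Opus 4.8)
The plan is to equip $FP$ with its $\mb{\Lambda}$-operad structure in two stages: first apply the standard principle that a lax symmetric monoidal functor carries operads to operads, obtaining the underlying operad of $FP$, and then transport the $\Lambda(n)$-actions along $F$ and check that the two equivariance axioms of \cref{Defi:lamop} survive. The hypotheses on $M$ and $N$ enter only to ensure that ``$\mb{\Lambda}$-operad in $M$'' and ``$\mb{\Lambda}$-operad in $N$'' are the well-behaved notions described diagrammatically after \cref{Defi:lamop}; the functor $F$ itself is merely assumed lax symmetric monoidal.

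\textbf{The operad structure.} Define $\mu^{FP}$ as the composite
\[
F(P(n)) \otimes \bigotimes_{i} F(P(k_i)) \xrightarrow{\widehat{\varphi}} F\Big( P(n) \otimes \bigotimes_{i} P(k_i) \Big) \xrightarrow{F\mu^P} F(P(\underline{k})),
\]
where $\widehat{\varphi}$ is the iterated comparison map built from $\varphi_2$, unambiguous by the associativity coherence for lax monoidal functors, and define the unit of $FP$ as $I_N \xrightarrow{\varphi_0} F(I_M) \xrightarrow{F(u)} F(P(1))$ with $u \colon I_M \to P(1)$ the operad unit of $P$. The operad unit and associativity laws for $FP$ reduce to those for $P$ by repeatedly invoking naturality of $\varphi_2$ and the coherence relating $\varphi_0$ and $\varphi_2$; no use of the symmetry of $F$ is needed here, since the operads involved are plain (non-symmetric).

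\textbf{The actions and their equivariance.} For each $n$, restricting the structure map $\alpha_n \colon \Lambda(n) \cdot P(n) \to P(n)$ to the summand indexed by $g \in \Lambda(n)$ produces a morphism $\theta_g \colon P(n) \to P(n)$ of $M$, and the group-action axioms say precisely that the $\theta_g$ are automorphisms assembling into a (anti-)homomorphism $\Lambda(n) \to \mathrm{Aut}_M(P(n))$. As $F$ is a functor, the morphisms $F(\theta_g)$ satisfy the same identities and hence define an action $\Lambda(n) \cdot F(P(n)) \to F(P(n))$; equivalently this is the canonical copower comparison $\Lambda(n) \cdot F(P(n)) \to F(\Lambda(n) \cdot P(n))$ followed by $F(\alpha_n)$. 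It then remains to check the two equivariance diagrams of \cref{Defi:lamop} for $FP$: each is obtained by applying $F$ to the corresponding diagram for $P$ and then sliding $F$ past the tensor products and copowers via naturality of $\varphi_2$ and of the copower comparisons. The maps $\Delta$ and $\pi_n$ occurring in the second diagram are induced from maps of the underlying indexing sets of the groups and are therefore respected automatically, while the permutation of tensor factors packaged in $\tilde{\mu}^P$ is respected exactly because $F$ is \emph{symmetric} monoidal.

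\textbf{Functoriality, and the main obstacle.} A morphism $P \to Q$ of $\mb{\Lambda}$-operads in $M$ is sent, componentwise, to maps $F(P(n)) \to F(Q(n))$; these commute with $\mu^{FP}$ and $\mu^{FQ}$ by naturality of $\varphi_2$, and with the $\Lambda$-actions because $F$ applied to the relevant commuting squares still commutes, and $F$ preserves identities and composition, so $P \mapsto FP$ is a functor. The only genuinely delicate point is the bookkeeping in the two equivariance diagrams: since $F$ need not preserve the colimits defining the copowers $\Lambda(n) \cdot (-)$, one cannot merely declare that ``$F$ preserves everything,'' and one must track the comparison maps $\Lambda(n) \cdot F(-) \to F(\Lambda(n) \cdot -)$ carefully against $\Delta$, $\pi_n$, and $\varphi_2$. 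This is defused by the element-wise description above: once the $\Lambda(n)$-action is encoded by the automorphisms $\theta_g$, every diagram becomes a diagram of honest morphisms of $M$ to which $F$ applies directly, and the equivariance equations of \cref{Defi:lamop} (read for objects and morphisms, as indicated in the discussion following that definition) transport along $F$ using nothing more than functoriality of $F$ and naturality of $\varphi_0$ and $\varphi_2$.
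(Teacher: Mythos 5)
Your proof is correct, but it takes a genuinely different route from the paper's. The paper identifies $\mb{\Lambda}$-operads in $M$ with monoids for the composition product $\circ_{M}$ on the functor category $[B\mb{\Lambda}^{\textrm{op}}, M]$ and then shows that $F_{*} = F \circ (-)$ is lax monoidal with respect to $\circ_{M}$ and $\circ_{N}$, constructing the unit and tensor constraints explicitly via coend manipulations and copower comparison maps; the desired functor on $\mb{\Lambda}$-operads is then the induced functor on monoids. You instead verify the structure directly: transport the underlying plain operad along the lax symmetric monoidal functor in the standard way, and observe that an action of the discrete group $\Lambda(n)$ via a copower map $P(n) \odot \Lambda(n) \to P(n)$ is the same datum as a family of automorphisms $\theta_{g}$ satisfying composition identities, which any functor preserves. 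This decomposition is the key move: it converts the two equivariance diagrams of \cref{Defi:lamop}, which a priori involve copowers that $F$ need not preserve, into $\Lambda(n)$-indexed families of equations between honest morphisms of $M$, to which $F$, the naturality of $\varphi_{2}$, and (for the second axiom only) the symmetry of $F$ apply directly. What the paper's approach buys is uniformity — lax monoidality of $F_{*}$ immediately gives preservation of monoids, and the same framework is reused elsewhere in the paper — at the cost of coend bookkeeping that the paper itself leaves to the reader; what your approach buys is that every verification is elementary and the single point where symmetry of $F$ is genuinely used (the permutation of tensor factors in the second equivariance axiom) is made visible rather than being absorbed into the Day convolution.
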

\begin{proof}
The category $\mb{\Lambda}$-operads in $M$ is the category of monoids for the composition product $\circ_{M}$ on $[B\mb{\Lambda}^{\textrm{op}}, M]$ constructed in \cite{cg}.  Composition with $F$ gives a functor
\[
F_{*}: [B\mb{\Lambda}^{\textrm{op}}, M] \rightarrow [B\mb{\Lambda}^{\textrm{op}}, N],
\]
and to show that it gives a functor between the categories of monoids we need only prove that $F_{*}$ is lax monoidal with respect to $\circ_{M}$ and $\circ_{N}$.  In other words, we must construct natural transformations with components $FX \circ_{N} FY \rightarrow F(X \circ_{M} Y)$ and $I_{Op(N)} \rightarrow F(I_{Op(M)})$ and then verify the lax monoidal functor axioms.

We first remind the reader about copowers in cocomplete categories.  For an object $X$ and set $S$, the copower $X \odot S$ is the coproduct $\coprod_{s \in S} X$.  We have natural isomorphisms $(X \odot S) \odot T \cong X \odot (S \times T)$ and $X \odot 1 \cong X$, and using these we can define an action of a group $G$ on an object $X$ using a map $X \odot G \rightarrow X$.  Any functor $F$ between categories with coproducts is lax monoidal with respect to those coproducts:  the natural map $FA \coprod FB \rightarrow F(A \coprod B)$ is just the map induced by the universal property of the coproduct using $F$ applied to the coproduct inclusions $A \hookrightarrow A \coprod B, B \hookrightarrow A \coprod B$.  In particular, for any functor $F$ we get an induced map $FX \odot S \rightarrow F(X \odot S)$.

The unit object in $[B\mb{\Lambda}^{\textrm{op}}, M]$ for $\circ_{M}$ is the copower $I_{M} \odot B\mb{\Lambda}(-,1)$.  Thus the unit constraint for $F_{*}$ is the composite
\[
I_{N} \odot B\mb{\Lambda}(-,1) \stackrel{\varphi_{0} \odot 1}{\longrightarrow} FI_{M} \odot B\mb{\Lambda}(-,1) \rightarrow F(I_{M} \odot B\mb{\Lambda}(-,1) ).
\]

For the tensor constraint, we will require a map
\[
t:(FY)^{\star n}(k) \rightarrow F\big(Y^{\star n}(k)\big)
\]
 where $\star$ is the Day convolution product; having constructed one, the tensor constraint is then the composite
\[
\begin{array}{rcl}
(FX \circ FY)(k) & \cong & \int^{n} FX(n) \otimes (FY)^{\star n}(k) \\
& \stackrel{ \int 1 \otimes t}{\longrightarrow}  & \int^{n} FX(n) \otimes F(Y^{\star n}(k)) \\
& \stackrel{\int \varphi_{2}}{\longrightarrow}  & \int^{n} F(X(n) \otimes Y^{\star n}(k)) \\
& \longrightarrow & F (\int^{n} X(n) \otimes Y^{\star n}(k)) \\
& \cong & F(X \circ Y)(k),
\end{array}
\]
where both isomorphisms are induced by universal properties (see \cite{cg} for more details) and the unlabeled arrow is induced by the same argument as that for coproducts above but this time using coends.  The arrow $t$ is constructed in a similar fashion, and is the composite below.
\[
\begin{array}{rcl}
(FY)^{\star n}(k) & = & \int^{k_{1}, \ldots, k_{n}} FY(k_{1}) \otimes \cdots \otimes FY(k_{n}) \odot B\mb{\Lambda}(k, \sum k_{i}) \\
& \rightarrow &  \int^{k_{1}, \ldots, k_{n}} F(Y(k_{1}) \otimes \cdots \otimes Y(k_{n})) \odot B\mb{\Lambda}(k, \sum k_{i}) \\
& \rightarrow & \int^{k_{1}, \ldots, k_{n}} F(Y(k_{1}) \otimes \cdots \otimes Y(k_{n}) \odot B\mb{\Lambda}(k, \sum k_{i}) ) \\
& \rightarrow & F\int^{k_{1}, \ldots, k_{n}} Y(k_{1}) \otimes \cdots \otimes Y(k_{n}) \odot B\mb{\Lambda}(k, \sum k_{i})  \\
& = & F(Y^{\star n}(k))
\end{array}
\]

Checking the lax monoidal functor axioms is tedious but entirely routine using the lax monoidal functor axioms for $F$ together with various universal properties of colimits, and we leave the details to the reader.
\end{proof}

\begin{prop}\label{gisgop}
Let $\mb{\Lambda}$ be an action operad.  Then the operad $\Lambda$ is itself a $\mb{\Lambda}$-operad.
\end{prop}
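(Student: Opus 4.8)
The plan is to take $P$ to be the operad $\Lambda$ itself, with each $P(n)=\Lambda(n)$ given the right-multiplication action of the group $\Lambda(n)$, i.e. $x\cdot g := xg$. That this defines a genuine action of $\Lambda(n)$ on the set $\Lambda(n)$ is immediate: $x\cdot e = x$ is the unit axiom and $(x\cdot g)\cdot g' = (xg)g' = x(gg') = x\cdot(gg')$ is associativity in the group $\Lambda(n)$. (One should use \emph{right} multiplication; left multiplication would fail associativity of the action since $\Lambda(n)$ need not be abelian.) So the substance of the proof is to verify the two equivariance axioms of \cref{Defi:lamop}, and the point is that each one is a direct specialisation of the single action operad axiom \cref{eqn:ao_axiom}.

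For the first equivariance axiom I would invoke \cref{eqn:ao_axiom} with the element ``$g'$'' set equal to $e\in\Lambda(n)$; then $\pi(g')$ is the identity permutation, no reindexing occurs, and with ``$f_i$''$=y_i$ and ``$f_i'$''$=g_i$ (all lying in $\Lambda(k_i)$) the axiom reads
\[
\mu(x;y_1,\ldots,y_n)\,\mu(e;g_1,\ldots,g_n)=\mu(x;y_1g_1,\ldots,y_ng_n).
\]
Since the top-level action on $P(\underline{k})=\Lambda(\underline{k})$ is again group multiplication and $y_i\cdot g_i=y_ig_i$, this is exactly $\mu^P(x;y_1,\ldots,y_n)\cdot\mu^{\Lambda}(e;g_1,\ldots,g_n)=\mu^P(x;y_1\cdot g_1,\ldots,y_n\cdot g_n)$.

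For the second equivariance axiom I would instead specialise \cref{eqn:ao_axiom} with ``$g$''$=x$, ``$g'$''$=g$, ``$f_i'$''$=e_i$ the identity of $\Lambda(k_i)$, and ``$f_i$''$=y_{\pi(g)^{-1}(i)}\in\Lambda(k_{\pi(g)^{-1}(i)})$, which are exactly the type constraints appearing in \cref{Defi:aop}. The $i$-th entry on the right-hand side of \cref{eqn:ao_axiom} is then $f_{\pi(g)(i)}\,e_i=y_{\pi(g)^{-1}(\pi(g)(i))}=y_i$, so the axiom yields
\[
\mu(x;y_{\pi(g)^{-1}(1)},\ldots,y_{\pi(g)^{-1}(n)})\,\mu(g;e_1,\ldots,e_n)=\mu(xg;y_1,\ldots,y_n),
\]
which, on rewriting $xg = x\cdot g$, is precisely the second equivariance axiom.

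I do not anticipate a real obstacle here: both equivariance conditions are essentially built into \cref{eqn:ao_axiom}, one obtained by trivialising the outer group element and one by trivialising the inner slots. The only thing demanding a little care is the index and permutation bookkeeping — checking that the chosen elements genuinely lie in the products appearing in \cref{Defi:aop}, and tracking $\pi(g)$ against $\pi(g)^{-1}$ so that the identity $f_{\pi(g)(i)} = y_i$ comes out correctly.
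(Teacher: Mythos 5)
Your proof is correct and is exactly the verification the paper has in mind: the paper's proof simply asserts that the two equivariance axioms "follow from the final axiom for $\mb{\Lambda}$ being an action operad," and your two specialisations of \cref{eqn:ao_axiom} (setting $g'=e$ for the first axiom, and setting the inner elements of the second factor to identities for the second) are the intended instantiations, with the index bookkeeping $f_{\pi(g)(i)}=y_{\pi(g)^{-1}(\pi(g)(i))}=y_i$ handled correctly.
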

\begin{proof}
One can in fact easily verify that the two equivariance axioms in the definition of a $\mb{\Lambda}$-operad follow from the final axiom for $\mb{\Lambda}$ being an action operad.
\end{proof}

Combining Theorem \ref{preserveGop} and Proposition \ref{gisgop} with Corollary \ref{symmoncor}, we immediately obtain the following.

\begin{cor}
Let $\mb{\Lambda}$ be an action operad.  Then $E\Lambda = \{ E\big(\Lambda(n)\big) \}$ (see \cref{nota:e_b}) is a $\mb{\Lambda}$-operad in $\mb{Cat}$.
\end{cor}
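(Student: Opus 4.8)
The plan is to obtain the result by simply assembling the three results just cited, so the ``proof'' is an instance of \cref{preserveGop}. First I would record that $\Lambda$, the underlying $\mb{Sets}$-operad of $\mb{\Lambda}$, is a $\mb{\Lambda}$-operad in $\mb{Sets}$ by \cref{gisgop}. Next I would note that $E\colon \mb{Sets} \to \mb{Cat}$ is symmetric monoidal with respect to the cartesian monoidal structures on both categories, by \cref{symmoncor}; in particular it is symmetric \emph{lax} monoidal, with unit constraint $\varphi_0\colon 1 \to E(1)$ and tensor constraint $\varphi_2$ the (invertible) coherence maps witnessing that $E$ preserves products. The conclusion then follows by applying \cref{preserveGop} with $M = \mb{Sets}$, $N = \mb{Cat}$, $F = E$, and $P = \Lambda$: the output is exactly a canonical $\mb{\Lambda}$-operad structure on $FP = E\Lambda = \{ E(\Lambda(n)) \}$ in $\mb{Cat}$.

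The only hypotheses of \cref{preserveGop} that are not literally supplied by \cref{gisgop,symmoncor} are that $\mb{Sets}$ and $\mb{Cat}$ be cocomplete symmetric monoidal categories whose tensor product preserves colimits in each variable. I would dispatch this by observing that both categories are cocomplete and cartesian closed (for $\mb{Cat}$ the internal hom is the functor category $[C,D]$), so each functor $- \times C$ is a left adjoint and hence preserves all colimits; this is precisely the condition needed, and it also matches the parenthetical remark accompanying \cref{Defi:lamop}. With this in place the hypotheses of \cref{preserveGop} are verified and the corollary is immediate.

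If desired I would add a sentence unwinding the transported structure: operadic composition on $E\Lambda$ is $E$ applied objectwise to $\mu^{\Lambda}$, using the isomorphisms $\varphi_2$ to identify $E(\Lambda(n)) \times E(\Lambda(k_1)) \times \cdots \times E(\Lambda(k_n))$ with $E(\Lambda(n) \times \Lambda(k_1) \times \cdots \times \Lambda(k_n))$, and the $\Lambda(n)$-action on $E(\Lambda(n))$ is $E$ applied to the $\mb{\Lambda}$-operad action on $\Lambda(n)$ from \cref{gisgop}. Since everything is a direct transport of $\mb{Sets}$-level data along a symmetric (strong, hence lax) monoidal functor, there is no genuine obstacle here; the only mildly fiddly point is checking that $E$ really does meet the lax monoidal hypotheses of \cref{preserveGop}, and that bookkeeping is exactly what \cref{symmoncor} already isolates, so I would not reprove it.
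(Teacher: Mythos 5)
Your proposal is correct and is exactly the paper's argument: the corollary is stated as an immediate consequence of combining \cref{preserveGop}, \cref{gisgop}, and \cref{symmoncor}, which is precisely the assembly you carry out (with $M=\mb{Sets}$, $N=\mb{Cat}$, $F=E$, $P=\Lambda$). Your extra remarks checking cocompleteness and that the cartesian product preserves colimits in each variable are sensible bookkeeping the paper leaves implicit.
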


Any $\mb{\Lambda}$-operad in $\mb{Cat}$ gives rise to a 2-monad on $\mb{Cat}$ \cite{cg}.  In our case, that 2-monad (also denoted $E\Lambda$) is given by
\[
X \mapsto \coprod_{n \geq 0} E\Lambda(n) \times_{\Lambda(n)} X^{n}
\]
where the action of $\Lambda(n)$ on $E\Lambda(n)$ is given by the obvious multiplication action on the right, and the action of $\Lambda(n)$ on $X^{n}$ is given using $\pi_{n}:\Lambda(n) \rightarrow \Sigma_{n}$ together with the standard left action of $\Sigma_{n}$ on $X^{n}$ in any symmetric monoidal category.  It will be useful for our calculations later to give an explicit description of the categories $E\Lambda(n) \times_{\Lambda(n)} X^{n}$.  Objects are equivalence classes of tuples $(g; x_1, \ldots, x_n)$ where $g \in \Lambda(n)$ and the $x_{i}$ are objects of $X$, with the equivalence relation given by
\[
(gh; x_1, \ldots, x_n) \sim (g; x_{\pi(h)^{-1}(1)}, \ldots, x_{\pi(h)^{-1}(n)});
\]
we write these classes as $[g; x_1, \ldots, x_n]$.  Morphisms are then equivalence classes of morphisms
\[
(!; f_1, \ldots, f_n): (g; x_1, \ldots, x_n) \to (g'; x_1', \ldots, x_n').
\]
We have two distinguished classes of morphisms, one for which the map $!: g \to h$ is the identity and one for which all the $f_{i}$'s are the identity.  Every morphism in $E\Lambda(n) \times X^{n}$ is uniquely a composite of a  morphism of the first type followed by one of the second type.  Now $E\Lambda(n) \times_{\Lambda(n)} X^{n}$ is a quotient of $E\Lambda(n) \times X^{n}$ by a free group action, so every morphism of $E\Lambda(n) \times_{\Lambda(n)} X^{n}$ is in the image of the quotient map.  Using this fact, we can prove the following useful lemma.

\begin{lem}\label{hom-set-lemma}
For an action operad $\mb{\Lambda}$ and any category $X$, the set of morphisms from $[e; x_1, \ldots, x_n]$ to $[e; y_1, \ldots, y_n]$ in $E\Lambda(n) \times_{\Lambda(n)} X^{n}$ is
\[
\coprod_{g \in \Lambda(n)} \prod_{i=1}^{n} X(x_i, y_{\pi(g)(i)}).
\]
\end{lem}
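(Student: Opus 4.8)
The plan is to read the hom-set off directly from the description, recorded just before the statement, of $E\Lambda(n) \times_{\Lambda(n)} X^{n}$ as the quotient of $\mathcal{C} := E\Lambda(n) \times X^{n}$ by a \emph{free} action of $\Lambda(n)$, together with the standard fact that such a quotient functor $q \colon \mathcal{C} \to \mathcal{C}/\Lambda(n)$ behaves like a covering: morphisms lift along $q$, uniquely once the source is pinned down. The translation category $E\Lambda(n)$ has exactly one morphism between any two of its objects, which is what makes the final count clean.

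First I would describe the fibre of $q$ over $[e; y_{1}, \ldots, y_{n}]$. Applying the stated relation $(gh; \ldots) \sim (g; \ldots)$ with $g = e$ shows that every object of this fibre has a unique representative with first coordinate any prescribed element of $\Lambda(n)$; concretely the fibre is $\{ (g; y_{\pi(g)(1)}, \ldots, y_{\pi(g)(n)}) : g \in \Lambda(n) \}$, and these objects are pairwise distinct since they have distinct first coordinates. The same applies to $[e; x_{1}, \ldots, x_{n}]$, for which I single out the representative $(e; x_{1}, \ldots, x_{n})$.

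Next I would establish the lifting bijection between $\mathrm{Hom}_{\mathcal{C}/\Lambda(n)}\big([e; x_{1}, \ldots, x_{n}], [e; y_{1}, \ldots, y_{n}]\big)$ and the coproduct, over the objects $b'$ of the fibre above $[e; y_{1}, \ldots, y_{n}]$, of $\mathrm{Hom}_{\mathcal{C}}\big((e; x_{1}, \ldots, x_{n}), b'\big)$. Surjectivity of $q$ on morphisms (noted in the excerpt, as the action is free) gives that every morphism $[e; x_{1}, \ldots, x_{n}] \to [e; y_{1}, \ldots, y_{n}]$ is $q$ of some morphism of $\mathcal{C}$, which after applying a suitable group element may be taken to have source exactly $(e; x_{1}, \ldots, x_{n})$; and if two morphisms of $\mathcal{C}$ with source $(e; x_{1}, \ldots, x_{n})$ agree after $q$, they differ by a group element, which must fix $(e; x_{1}, \ldots, x_{n})$ and hence, by freeness of the action on objects, is the identity. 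Together with the fibre description this identifies the hom-set with $\coprod_{g \in \Lambda(n)} \mathrm{Hom}_{\mathcal{C}}\big((e; x_{1}, \ldots, x_{n}), (g; y_{\pi(g)(1)}, \ldots, y_{\pi(g)(n)})\big)$.

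Finally I would evaluate each summand: a hom-set in the product category $\mathcal{C} = E\Lambda(n) \times X^{n}$ is the product of the hom-sets in the factors; the hom-set in $E\Lambda(n)$ between any two objects is a one-element set; and the hom-set in $X^{n}$ from $(x_{1}, \ldots, x_{n})$ to $(y_{\pi(g)(1)}, \ldots, y_{\pi(g)(n)})$ is $\prod_{i=1}^{n} X(x_{i}, y_{\pi(g)(i)})$. Substituting yields the claimed formula. The only point genuinely needing care is the unique-lifting claim of the previous paragraph — that is, the correct use of freeness of the $\Lambda(n)$-action on objects — and keeping the permutation conventions straight so that the $y$'s in the $g$-summand are reindexed by $\pi(g)$ and not $\pi(g)^{-1}$; everything else is routine bookkeeping.
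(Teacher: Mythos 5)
Your proof is correct and takes essentially the same route as the paper: lift a morphism of the quotient to a morphism of $E\Lambda(n) \times X^{n}$ with source normalized to $(e; x_1, \ldots, x_n)$, identify the possible targets (indexed by $g \in \Lambda(n)$) lying over $[e; y_1, \ldots, y_n]$, and read off the hom-set of the product category, with the indexing by $\pi(g)$ rather than $\pi(g)^{-1}$ coming out correctly. The only difference is that you make explicit the unique-lifting (injectivity) step via freeness of the action on objects, which the paper leaves implicit.
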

\begin{proof}
A morphism with source $(e; x_1, \ldots, x_n)$ in $E\Lambda(n) \times X^{n}$ is uniquely a composite
\[
(e; x_1, \ldots, x_n) \stackrel{(\id; f_{1}, \ldots, f_{n})}{\longrightarrow} (e; x_1', \ldots, x_n') \stackrel{(!; \id, \ldots, \id)}{\longrightarrow} (g; x_1', \ldots, x_n').
\]
Descending to the quotient, this becomes a morphism
\[
[e; x_1, \ldots, x_n] \to [g; x_1', \ldots, x_n'] = [e; x_{\pi(g)^{-1}(1)}', \ldots, x_{\pi(g)^{-1}(n)}'],
\]
and therefore is a morphism $[e; x_1, \ldots, x_n] \to [e; y_1, \ldots, y_n]$ precisely when $y_i = x_{\pi(g)^{-1}(i)}'$, and so $f_i \in 	X(x_i, y_{\pi(g)(i)})$.
\end{proof}

The 2-monad $E\Lambda$ is both finitary and cartesian (see \cite{cg}).  In fact we can characterize this operad uniquely (up to equivalence) using a standard argument.

\begin{Defi}
Let $\mb{\Lambda}$ be an action operad.  A \textit{$\mb{\Lambda}_{\infty}$ operad} $P$ is a $\mb{\Lambda}$-operad in which each action $P(n) \times \Lambda(n) \rightarrow P(n)$ is free and each $P(n)$ is contractible.
\end{Defi}
\begin{rem}
The above definition makes sense in a wide context, but needs interpretation.  We can interpret the freeness condition in any complete category, as completeness allows one to compute fixed points using equalizers.  Contractibility then requires a notion of equivalence or weak equivalence, such as in an $(\infty, 1)$-category or Quillen model category, and a terminal object.  Our interest is in the above definition interpreted in $\mb{Cat}$, in which case both conditions (free action and contractible $P(n)$'s) mean the obvious thing.
\end{rem}

\begin{prop}
$\mb{\Lambda}_{\infty}$ operads in $\mb{Cat}$ are unique up to a zig-zag of pointwise equivalences of $\mb{\Lambda}$-operads.
\end{prop}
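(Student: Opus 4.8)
The plan is to run the standard ``product trick'' that shows any two $E_\infty$-type operads are equivalent. First I would record that $E\Lambda$ is itself a $\mb{\Lambda}_\infty$ operad: by the description of the translation category in \cref{Defi:e_b}, each $E\Lambda(n) = E\big(\Lambda(n)\big)$ is the indiscrete category on the nonempty set $\Lambda(n)$, hence equivalent to the terminal category, while the $\Lambda(n)$-action on $E\Lambda(n)$ used to build the monad $E\Lambda$ is right multiplication of the group on itself, which is free. So it suffices to show that any two $\mb{\Lambda}_\infty$ operads are joined by a zig-zag of pointwise equivalences, and the statement about $E\Lambda$ follows by comparing an arbitrary such operad to $E\Lambda$.

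Given $\mb{\Lambda}_\infty$ operads $P$ and $Q$ in $\mathbf{Cat}$, I would form the levelwise product $P \times Q$, with $(P\times Q)(n) = P(n) \times Q(n)$ equipped with the diagonal $\Lambda(n)$-action. Since $\mb{\Lambda}$-operads in $\mathbf{Cat}$ are the monoids for the composition product $\circ_{\mathbf{Cat}}$ on $[B\mb{\Lambda}^{\mathrm{op}}, \mathbf{Cat}]$ (as recalled in the proof of \cref{preserveGop}), and $[B\mb{\Lambda}^{\mathrm{op}},\mathbf{Cat}]$ has products computed pointwise, the product of two monoids is again a monoid — its multiplication on $(P\times Q)\circ_{\mathbf{Cat}}(P\times Q)$ is obtained from $\mathrm{pr}\circ_{\mathbf{Cat}}\mathrm{pr}$ into each factor followed by $\mu^P$ and $\mu^Q$, with no hypotheses on $\circ_{\mathbf{Cat}}$ needed. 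Hence $P\times Q$ is again a $\mb{\Lambda}$-operad, and the two projections are maps of $\mb{\Lambda}$-operads.

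Next I would check that $P\times Q$ is again a $\mb{\Lambda}_\infty$ operad. Contractibility is immediate: $P(n)\times Q(n)$ is a product of categories each equivalent to the terminal category, hence equivalent to the terminal category. Freeness is equally easy: if $g\in\Lambda(n)$ fixes an object $(x,y)$ of $P(n)\times Q(n)$, then $g$ fixes $x$, so $g=e$ because the action on $P(n)$ is free. Finally, the projection $P(n)\times Q(n)\to P(n)$ is an equivalence of categories because $Q(n)$ is equivalent to $1$ (for any object $q_0\in Q(n)$ the functor $x\mapsto (x,q_0)$ is a weak inverse), and symmetrically for $Q$. Thus
\[
P \;\xleftarrow{\;\mathrm{pr}_P\;}\; P\times Q \;\xrightarrow{\;\mathrm{pr}_Q\;}\; Q
\]
is a zig-zag of pointwise equivalences of $\mb{\Lambda}$-operads, and in particular the instance $P \leftarrow P\times E\Lambda \to E\Lambda$ exhibits an arbitrary $\mb{\Lambda}_\infty$ operad as equivalent to $E\Lambda$.

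The only place any real care is needed — and hence the ``main obstacle'' — is confirming that the two defining conditions of $\mb{\Lambda}_\infty$, freeness of the action and contractibility, are preserved by the levelwise product, and that the projections genuinely deserve to be called pointwise equivalences; everything else is the formal fact that products of monoids are monoids together with the observation that $E\Lambda$ is manifestly an example. If one prefers to read ``contractible'' as ``having contractible classifying space'' rather than ``equivalent to $1$'', the same argument goes through with ``equivalence of categories'' replaced by ``weak equivalence'', using that the nerve and geometric realization preserve finite products up to homotopy.
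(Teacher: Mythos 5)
Your argument is correct and is exactly the paper's proof: form the levelwise product $P \times Q$ with the diagonal action, observe it is again $\mb{\Lambda}_{\infty}$, and use the two projections as the zig-zag of pointwise equivalences. The extra material about $E\Lambda$ itself being $\mb{\Lambda}_{\infty}$ is not needed for this proposition (the paper records it separately as \cref{egisinf}), but it is also correct.
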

\begin{proof}
Given $P,Q$ $\mb{\Lambda}_{\infty}$ operads in $\mb{Cat}$, the product $P \times Q$ with the diagonal action is also $\mb{\Lambda}_{\infty}$.  Each of the projection maps is a pointwise equivalence of $\mb{\Lambda}$-operads.
\end{proof}
\begin{rem}
Once again, this proof holds in a wide context.  We required that the product of free actions is again free, true in any complete category.  We also required that the product of contractible objects is contractible; this condition will hold, for example, in any Quillen model category in which all objects are fibrant or in which the product of weak equivalences is again a weak equivalence.
\end{rem}

\begin{cor}\label{egisinf}
$E\Lambda$ is $\mb{\Lambda}_{\infty}$, hence unique in the sense above.
\end{cor}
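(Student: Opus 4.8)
The plan is to verify directly that $E\Lambda$ satisfies the two defining conditions of a $\mb{\Lambda}_{\infty}$ operad, and then to invoke the preceding proposition for the uniqueness clause. We already know, from the corollary stating that $E\Lambda = \{ E(\Lambda(n)) \}$ is a $\mb{\Lambda}$-operad in $\mb{Cat}$, that the operad structure is in place; so it remains only to check that each $E(\Lambda(n))$ is contractible and that the $\Lambda(n)$-action on it is free, both interpreted in $\mb{Cat}$ as in the remarks surrounding the definition of $\mb{\Lambda}_{\infty}$ operad.

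For contractibility I would observe that for any nonempty set $X$ the category $EX$ is a groupoid with a unique morphism between any two of its objects, so the unique functor $EX \to \mb{1}$ to the terminal category is an equivalence: any choice of element of $X$ furnishes a section, and the unique isomorphisms of $EX$ assemble into the required natural isomorphism. Since each $\Lambda(n)$ is a group it is in particular nonempty, so every $E(\Lambda(n))$ is contractible; for $n = 0$ this uses only that $\Lambda(0)$ contains its identity element.

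For freeness, recall that the action of $\Lambda(n)$ on $E\Lambda(n) = E(\Lambda(n))$ is the one induced by applying $E$ to the right multiplication action of $\Lambda(n)$ on its underlying set. That action of the group on a set is free, so $\Lambda(n)$ acts freely on the objects of $E(\Lambda(n))$; and since a morphism of $EX$ is determined by its source and target, any element fixing a morphism must in particular fix its source and hence be the identity. Thus the action is free in the sense required.

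Putting these two observations together shows $E\Lambda$ is a $\mb{\Lambda}_{\infty}$ operad in $\mb{Cat}$, and the preceding proposition then yields the asserted uniqueness up to a zig-zag of pointwise equivalences of $\mb{\Lambda}$-operads. I do not expect any genuine obstacle here; the only mild subtlety is pinning down what a free action in $\mb{Cat}$ should mean and checking it on morphisms as well as objects, which the remark following the definition of $\mb{\Lambda}_{\infty}$ operad already flags, and which is immediate for the codiscrete groupoids $E(\Lambda(n))$.
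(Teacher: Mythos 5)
Your proposal is correct and is exactly the argument the paper intends: the corollary is stated without proof as an immediate consequence, and the two facts you verify (that $E(\Lambda(n))$ is contractible because the translation category of any nonempty set is equivalent to the terminal category, and that the right-multiplication action is free on objects and hence on morphisms since morphisms in $EX$ are determined by their endpoints) are precisely the content being left to the reader. Your attention to the $n=0$ case and to what freeness means on morphisms is a welcome bit of extra care, but there is no divergence from the paper's approach.
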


\begin{rem}
One should also note that combining Theorem \ref{preserveGop} with Corollary \ref{egisinf}, we get canonical $\mb{\Lambda}_{\infty}$ operads in the category of simplicial sets by taking the nerve (the nerve functor is represented by a cosimplicial category, namely $\Delta \subseteq \mb{Cat}$, so preserves products) and then in suitable categories of topological spaces by taking the geometric realization (once again, product-preserving with the correct category of spaces).  Thus we have something like a Barratt-Eccles $\mb{\Lambda}_{\infty}$ operad for any action operad $\mb{\Lambda}$.
\end{rem}

\section{Abstract properties of the Borel construction}

Kelly's theory of clubs \cite{kelly_club1, kelly_club0, kelly_club2} was designed to simplify and explain certain aspects of coherence results, namely the fact that many coherence results rely on extrapolating information about general free objects for a 2-monad $T$ from information about the specific free object $T1$ where $1$ denotes the terminal category.  This occurs, for example, in the study of the many different flavors of monoidal category:  plain monoidal category, braided monoidal category, symmetric monoidal category, and so on.  This section will explain how every action operad gives rise to a club, as well as compute the clubs which arise as the image of this procedure.

We begin by reminding the reader of the notion of a club, or more specifically what Kelly \cite{kelly_club1,kelly_club2} calls a club over $\mb{P}$.  We will only be interested in clubs over $\mb{P}$, and thusly shorten the terminology to club from this point onward.  Defining clubs is accomplished most succinctly using Leinster's terminology of generalized operads \cite{leinster}.

\begin{Defi}
Let $C$ be a category with finite limits.
\begin{enumerate}
\item A monad $T:C \rightarrow C$ is \textit{cartesian} if the functor $T$ preserves pullbacks, and the naturality squares for the unit $\eta$ and the multiplication $\mu$ for $T$ are all pullbacks.
\item The category of \textit{$T$-collections}, $T\mbox{-}\mb{Coll}$, is the slice category $C/T1$, where $1$ denotes the terminal object.
\item Given a pair of $T$-collections $X \stackrel{x}{\rightarrow} T1, Y \stackrel{y}{\rightarrow} T1$, their \textit{composition product} $X \circ Y$ is given by the pullback below together with the morphism along the top.
    \[
\xy
(0,0)*+{X \circ Y} ="00";
(15,0)*+{TY} ="10";
(30,0)*+{T^{2}1} ="20";
(45,0)*+{T1} ="30";
(0,-10)*+{X} ="01";
(15,-10)*+{T1} ="11";
{\ar^{} "00" ; "10"};
{\ar^{Ty} "10" ; "20"};
{\ar^{\mu} "20" ; "30"};
{\ar^{T!} "10" ; "11"};
{\ar_{x} "01" ; "11"};
{\ar^{} "00" ; "01"};
(3,-3)*{\lrcorner};
\endxy
\]
\item The composition product, along with the unit of the adjunction $\eta:1 \rightarrow T1$, give $T\mbox{-}\mb{Coll}$ a monoidal structure.  A \textit{$T$-operad} is a monoid in $T\mbox{-}\mb{Coll}$.
\end{enumerate}
\end{Defi}

Let $\Sigma$ be the operad of symmetric groups.  This is the terminal object of the category of action operads, with each $\pi_{n}$ the identity map.  Then $E\Sigma$ is a 2-monad on $\mb{Cat}$, and by \cite{cg} it is cartesian.

\begin{Defi}
A \textit{club} is a $T$-operad in $\mb{Cat}$ for $T = E\Sigma$.
\end{Defi}

\begin{rem}
The category $\mb{P}$ in Kelly's terminology is the result of applying $E\Sigma$ to $1$, and can be identified with $B\Sigma = \coprod B\Sigma_{n}$.
\end{rem}

It is useful to break down the definition of a club.  A club consists of
\begin{enumerate}
\item a category $K$ together with a functor $K \rightarrow B \Sigma$,
\item a multiplication map $K \circ K \rightarrow K$, and
\item a unit map $1 \rightarrow K$
\end{enumerate}
satisfying the axioms to be a monoid in the monoidal category of $E\Sigma$-collections.  By the definition of $K \circ K$ as a pullback, we see that objects are tuples of objects of $K$ $(x; y_{1}, \ldots, y_{n})$ where $\pi(x) = n$.  A morphism
\[
(x; y_{1}, \ldots, y_{n}) \to (z; w_{1}, \ldots, w_{m})
\]
exists only when $n=m$ (since $B\Sigma$ only has endomorphisms) and then consists of a morphism $f:x \to z$ in $K$ together with morphisms $g_{i}:y_{i} \to z_{\pi(x)(i)}$ in $K$.

\begin{nota}\label{nota:clubmult}
For a club $K$ and a morphism $(f; g_{1}, \ldots, g_{n})$ in $K \circ K$, we write $f(g_{1}, \ldots, g_{n})$ for the image morphism under the functor $K \circ K \rightarrow K$.
\end{nota}

We will usually just refer to a club by its underlying category $K$.

\begin{thm}
Let $\mb{\Lambda}$ be an action operad.  Then the map of operads $\pi:\Lambda \rightarrow \Sigma$ gives the category $B\mb{\Lambda} = \coprod B\Lambda(n)$ the structure of a club.
\end{thm}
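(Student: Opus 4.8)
The plan is to show that $B\mb{\Lambda}$, equipped with the functor $B\pi\colon B\mb{\Lambda}\to B\Sigma$, is a monoid in the monoidal category $(E\Sigma\mbox{-}\mb{Coll},\circ,I)$, where $E\Sigma\mbox{-}\mb{Coll}=\mb{Cat}/B\Sigma$. Concretely, one must produce a multiplication $m\colon B\mb{\Lambda}\circ B\mb{\Lambda}\to B\mb{\Lambda}$ and a unit $u\colon I\to B\mb{\Lambda}$, both maps in $\mb{Cat}/B\Sigma$, and then verify the monoid axioms. The guiding idea is that this structure is nothing but the operad structure of $\Lambda$ in disguise: $m$ will be operadic composition $\mu$, $u$ will select the operadic unit $e\in\Lambda(1)$, functoriality of $m$ will be exactly the action operad axiom \eqref{eqn:ao_axiom}, and the associativity and two unit axioms for the monoid will be exactly the operad associativity and unit axioms for $\Lambda$.

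First I would describe $B\mb{\Lambda}\circ B\mb{\Lambda}$ explicitly. Since $B\pi$ is the identity on objects, the description of $K\circ K$ for a category $K$ over $B\Sigma$ recalled above specializes as follows: objects are finite tuples $(n;k_1,\dots,k_n)$ of natural numbers; a morphism $(n;k_1,\dots,k_n)\to(n;k_1',\dots,k_n')$ consists of $f\in\Lambda(n)$ together with elements $g_i\in\Lambda(k_i)$, and exists precisely when $k_i'=k_{\pi(f)^{-1}(i)}$; and composition is given by
\[
(f';g_1',\dots,g_n')\circ(f;g_1,\dots,g_n)=\bigl(f'f;\;g'_{\pi(f)(1)}g_1,\dots,g'_{\pi(f)(n)}g_n\bigr),
\]
the twist by $\pi(f)$ being forced by the multiplication of the $2$-monad $E\Sigma$, which encodes operadic composition in $\Sigma$ (that is, the operations $\beta$ and $\delta$). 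The canonical structure functor $B\mb{\Lambda}\circ B\mb{\Lambda}\to B\Sigma$ then sends $(n;k_\bullet)$ to $\underline{k}:=k_1+\dots+k_n$ on objects and $(f;g_\bullet)$ to $\mu^{\Sigma}(\pi f;\pi g_1,\dots,\pi g_n)$ on morphisms.

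Next, set $m(n;k_\bullet)=\underline{k}$ and $m(f;g_\bullet)=\mu(f;g_1,\dots,g_n)\in\Lambda(\underline{k})=B\mb{\Lambda}(\underline{k},\underline{k})$; this is well defined on morphisms because $(k_i')$ is a permutation of $(k_i)$, so the source and target agree. Preservation of identities reduces to the elementary identity $\mu(e;e,\dots,e)=e$, and preservation of composition is exactly
\[
\mu\bigl(f'f;\,g'_{\pi(f)(1)}g_1,\dots,g'_{\pi(f)(n)}g_n\bigr)=\mu(f';g_1',\dots,g_n')\,\mu(f;g_1,\dots,g_n),
\]
which is \eqref{eqn:ao_axiom} under the substitution $g=f'$, $g'=f$, $f_i=g_i'$, $f_i'=g_i$; and $m$ lies over $B\Sigma$ precisely because $\pi$ is a map of operads, so that $\pi\bigl(\mu(f;g_\bullet)\bigr)=\mu^{\Sigma}(\pi f;\pi g_\bullet)$. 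For the unit, let $u\colon I\to B\mb{\Lambda}$ be the functor picking out the object $1$ of $B\mb{\Lambda}$ together with the morphism $e\in\Lambda(1)$; it lies over $B\Sigma$ automatically. Under the canonical isomorphisms $I\circ B\mb{\Lambda}\cong B\mb{\Lambda}\cong B\mb{\Lambda}\circ I$, the left and right unit laws for $(B\mb{\Lambda},m,u)$ become $\mu(e;g)=g$ and $\mu(f;e,\dots,e)=f$ --- the two operad unit axioms --- while the associativity axiom for $m$ becomes the operad associativity axiom for $\Lambda$ (both legs send a doubly nested tuple to the total sum of its entries on objects, and to the common value of the two iterated operadic composites on morphisms).

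The main obstacle is bookkeeping rather than anything conceptual: computing the composition law in the pullback $B\mb{\Lambda}\circ B\mb{\Lambda}$ correctly --- in particular the presence and direction of the $\pi(f)$-twist --- and then keeping track of the associator and unitors of $E\Sigma\mbox{-}\mb{Coll}$, which come from the cartesian-monad machinery, carefully enough to confirm that the monoid axioms are literally the operad axioms for $\Lambda$. Everything else is formal, using that $E\Sigma$ is cartesian (so that $\circ$ is well behaved) and that $\pi\colon\Lambda\to\Sigma$ is a map of operads.
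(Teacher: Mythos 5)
Your proof is correct, but it takes a genuinely different route from the paper. You work directly in the slice category $\mb{Cat}/B\Sigma$: you unpack the pullback $B\mb{\Lambda}\circ B\mb{\Lambda}$, define the multiplication by operadic composition $\mu$ and the unit by $e\in\Lambda(1)$, and observe that functoriality of the multiplication on composites is literally the extra action operad axiom \eqref{eqn:ao_axiom} (under the substitution you indicate), while the monoid axioms are the operad unit and associativity axioms. The paper instead invokes Leinster's equivalence between $T$-operads and cartesian monads equipped with a cartesian monad morphism to $T$: it shows the induced monad is $E\Lambda$ (cartesian by \cite{cg}), constructs $\tilde\pi\colon E\Lambda\Rightarrow E\Sigma$ from the coequalizer presentation of the Borel construction, and verifies by hand that the relevant naturality squares $E\Lambda(n)\times_{\Lambda(n)}X^n\to E\Sigma_n\times_{\Sigma_n}X^n$ over $B\Lambda(n)\to B\Sigma_n$ are pullbacks. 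Your argument is more elementary and self-contained --- it needs only that $E\Sigma$ is cartesian (so that $\circ$ is coherently monoidal) and makes completely transparent which club axiom corresponds to which operad axiom --- whereas the paper's argument simultaneously identifies the monad induced by the club as the Borel construction $E\Lambda$ and establishes its cartesianness over $E\Sigma$, facts the paper relies on in later sections. One small point worth making explicit if you write this up: the direction of the $\pi(f)$-twist in the composition law of $B\mb{\Lambda}\circ B\mb{\Lambda}$ is exactly what \cref{hom-set-lemma} gives for morphisms in $E\Sigma(B\mb{\Lambda})$, and it is what makes the substitution into \eqref{eqn:ao_axiom} come out on the nose, so it deserves a sentence of justification rather than being asserted.
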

\begin{proof}
To give the functor $B\pi \cn B\mb{\Lambda} \to B \mb{\Sigma}$ the structure of a club it suffices (see \cite{leinster}) to show that
\begin{itemize}
\item the induced monad, which we will show to be $E\Lambda$, is a cartesian monad on $\mb{Cat}$,
\item the transformation $\tilde{\pi}:E\Lambda \Rightarrow E\Sigma$ induced by the functor $E\pi$ is cartesian, and
\item $\tilde{\pi}$ commutes with the monad structures.
\end{itemize}
$E\Lambda$ is always cartesian by results of \cite{cg}.  The transformation $\tilde{\pi}$ is the coproduct of the maps $\tilde{\pi}_{n}$ which are induced by the universal property of the coequalizer as shown below.
\[
\xy
(0,0)*+{\scriptstyle E\Lambda(n) \times \Lambda(n) \times X^n} ="00";
(0,-10)*+{\scriptstyle E\Sigma_{n} \times \Sigma_{n} \times X^n} ="01";
(30,0)*+{\scriptstyle E\Lambda(n) \times X^n} ="10";
(30,-10)*+{\scriptstyle E\Sigma_{n} \times X^n} ="11";
(60,0)*+{\scriptstyle E\Lambda(n) \times_{\Lambda(n)} X^n} ="20";
(60,-10)*+{\scriptstyle E\Sigma_{n} \times_{\Sigma_{n}}  X^n} ="21";
{\ar (12,1)*{}; (22,1)*{} };
{\ar (12,-1)*{}; (22,-1)*{} };
{\ar_{E\pi \times \pi \times 1} "00" ; "01"};
{\ar (12,-9)*{}; (22,-9)*{} };
{\ar (12,-11)*{}; (22,-11)*{} };
{\ar_{E\pi \times 1} "10" ; "11"};
{\ar@{.>}^{\tilde{\pi}_{n}} "20" ; "21"};
{\ar "10" ; "20"};
{\ar "11" ; "21"};
\endxy
\]
Naturality is immediate, and since $\pi$ is a map of operads $\tilde{\pi}$ also commutes with the monad structures.

It only remains to show that $\tilde{\pi}$ is cartesian and that the induced monad is actually $E\Lambda$.  Since the monads $E\Lambda$ and $E\Sigma$ both decompose into a disjoint union of functors, we only have to show that, for any $n$, the square below is a pullback.
\[
\xy
(0,0)*+{E\Lambda(n) \times_{\Lambda(n)} X^n} ="00";
(0,-10)*+{B\Lambda(n)} ="01";
(35,0)*+{E\Sigma_{n} \times_{\Sigma_{n}} X^n} ="10";
(35,-10)*+{B\Sigma_{n}} ="11";
{\ar^{} "00" ; "10"};
{\ar^{} "10" ; "11"};
{\ar^{} "00" ; "01"};
{\ar^{} "01" ; "11"};
\endxy
\]
By the explicit description of the coequalizer given in \cite{cg}, this amounts to showing that the square below is a pullback.
\[
\xy
(0,0)*+{E\Lambda(n) \times X^n/\Lambda(n)} ="00";
(0,-10)*+{B\Lambda(n)} ="01";
(35,0)*+{E\Sigma_{n} \times X^n/\Sigma_{n}} ="10";
(35,-10)*+{B\Sigma_{n}} ="11";
{\ar^{} "00" ; "10"};
{\ar^{} "10" ; "11"};
{\ar^{} "00" ; "01"};
{\ar^{} "01" ; "11"};
\endxy
\]
Here, $A \times B/G$ is the category whose objects are equivalence classes of pairs $(a,b)$ where $(a,b) \sim (ag, g^{-1}b)$, and similarly for morphisms.  Now the bottom map is clearly bijective on objects since these categories only have one object.  An object in the top right is an equivalence class
\[
[\sigma; x_{1}, \ldots, x_{n}] = [e; x_{\sigma^{-1}(1)}, \ldots, x_{\sigma^{-1}(n)}].
\]
A similar description holds for objects in the top left, with $g \in \Lambda(n)$ replacing $\sigma$ and $\pi(g)^{-1}$ replacing $\sigma^{-1}$ in the subscripts.  The map along the top sends $[g; x_{1}, \ldots, x_{n}]$ to $[\pi(g); x_{1}, \ldots, x_{n}]$, and thus sends $[e; x_{1}, \ldots, x_{n}]$ to $[e; x_{1}, \ldots, x_{n}]$, giving a bijection on objects.

Now a morphism in $E\Lambda(n) \times X^{n}/\Lambda(n)$ can be given as
\[
[e; x_{1}, \ldots, x_{n}] \stackrel{[!; f_{i}]}{\longrightarrow} [g; y_{1}, \ldots, y_{n}].
\]
Mapping down to $B\Lambda(n)$ gives $ge^{-1} = g$, while mapping over to $E\Sigma_{n} \times X^{n}/\Sigma_{n}$ gives $[!; f_{i}]$ where $!:e \rightarrow \pi(g)$ is now a morphism in $E\Sigma_{n}$.  In other words, a morphism in the upper left corner of our putative pullback square is determined completely by its images along the top and lefthand functors.  Furthermore, given $g \in \Lambda(n)$, $\tau = \pi(g)$, and morphisms $f_{i}:x_{i} \rightarrow y_{i}$ in $X$, the morphism $[!:e \rightarrow g; f_{i}]$ maps to the pair $(g, [!:e \rightarrow \tau; f_{i}])$, completing the proof that this square is indeed a pullback.
\end{proof}

The club, which we now denote $K_{\mb{\Lambda}}$, associated to $E\Lambda$ has the following properties.  First, the functor $K_{\mb{\Lambda}} \rightarrow B\Sigma$ is a functor between groupoids.  Second, the functor $K_{\mb{\Lambda}} \rightarrow B\Sigma$ is  bijective-on-objects.  We claim that these properties characterize those clubs which arise from action operads.  Thus the clubs arising from action operads are very similar to PROPs \cite{mac_prop, markl_prop}.

\begin{thm}\label{thm:club=operad}
Let $K$ be a club such that
\begin{itemize}
\item the map $K \rightarrow B \mb{\Sigma}$ is bijective on objects and
\item $K$ is a groupoid.
\end{itemize}
Then $K \cong K_{\mb{\Lambda}}$ for some action operad $\mb{\Lambda}$.  The assignment $\mb{\Lambda} \mapsto K_{\mb{\Lambda}}$ is a full and faithful embedding of the category of action operads $\mb{AOp}$ into the category of clubs.
\end{thm}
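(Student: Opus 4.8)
The plan is to extract an action operad from the club $K$ by way of the algebraic characterization in \cref{thm:charAOp}, and then to identify $K$ with $K_{\mb{\Lambda}}$ by comparing underlying categories and club multiplications; for the last sentence of the statement, one then checks functoriality, faithfulness, and fullness of $\mb{\Lambda} \mapsto K_{\mb{\Lambda}}$ more or less by hand.

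\textbf{Recovering the data.} Since $K \to B\mb{\Sigma}$ is bijective on objects I identify $\mathrm{ob}\,K$ with $\mathbb{N}$ so that the anchor is the identity on objects; as $B\mb{\Sigma}(m,n) = \varnothing$ for $m \neq n$ and the anchor is a functor, $K(m,n) = \varnothing$ for $m \neq n$, and since $K$ is a groupoid each $\Lambda(n) := K(n,n)$ is a group under composition, with the anchor restricting to group homomorphisms $\pi_n : \Lambda(n) \to \Sigma_n$. An object of $K \circ K$ is a tuple $(n; k_1, \dots, k_n)$ of objects of $K$; chasing the pullback description of $K \circ K$, its anchor $K \circ K \to B\mb{\Sigma}$ sends it to $\underline{k} := k_1 + \cdots + k_n$, this being the multiplication of the monad $E\Sigma$ applied to objects, namely concatenation of words. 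Hence, $K$ being bijective on objects, the club multiplication $m : K \circ K \to K$ is \emph{forced} on objects to send $(n;k_1,\dots,k_n)$ to $\underline{k}$. A morphism $(n;k_1,\dots,k_n) \to (n;l_1,\dots,l_n)$ of $K \circ K$ is a tuple $(f;g_1,\dots,g_n)$ with $f \in \Lambda(n)$ and $g_i : k_i \to l_{\pi(f)(i)}$ in $K$ (so $l_{\pi(f)(i)} = k_i$ and $g_i \in \Lambda(k_i)$), and, using \cref{nota:clubmult}, I set
\[
\beta(g_1,\dots,g_n) := e_n(g_1,\dots,g_n), \qquad \delta_{n;k_1,\dots,k_n}(f) := f(e_{k_1},\dots,e_{k_n}),
\]
both landing in $\Lambda(\underline{k})$, in parallel with the decomposition of operadic multiplication in $\Sigma$ into block sum and block permutation.

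\textbf{The axioms.} The main work is to verify axioms \eqref{eq1}--\eqref{eq9} of \cref{thm:charAOp} for $(\Lambda(n),\pi_n,\beta,\delta)$ directly from the fact that $m$ is an associative, unital monoid multiplication on the $E\Sigma$-collection $K \to B\mb{\Sigma}$. The naturality axioms \eqref{eq1} and \eqref{eq4} hold because $m$ is a morphism over $B\mb{\Sigma}$; axioms \eqref{eq2}, \eqref{eq5} and the identity clauses follow from the monoid unit laws together with functoriality of $m$; that $\beta$ is a group homomorphism and axioms \eqref{eq6}, \eqref{eq8} follow from functoriality of $m$ applied to composites in $K \circ K$ of the two distinguished kinds of morphism (those with $f = e$, and those with every $g_i = e_{k_i}$) taken in either order; and the associativity axioms \eqref{eq3}, \eqref{eq7}, \eqref{eq9} follow from the monoid associativity law for $m$, that is, the associator of the composition product on $E\Sigma$-collections. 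I expect this bookkeeping to be the main obstacle: no single step is hard, but one must keep straight the identifications of the objects and morphisms of $K \circ K$ and $K \circ K \circ K$ while reading each club axiom as an equation among the $\beta$'s and $\delta$'s --- in effect running the computations in the proof of \cref{thm:charAOp} in reverse. By \cref{thm:charAOp} this produces an action operad $\mb{\Lambda}$, and unwinding \cref{Defi:actop_to_cat} together with the club structure put on $B\mb{\Lambda}$ in the preceding theorem shows that $B\mb{\Lambda}$ has the same underlying category $K$, the same anchor, and the same multiplication, whence $K \cong K_{\mb{\Lambda}}$ over $B\mb{\Sigma}$.

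\textbf{Full and faithfulness.} A morphism $\mb{\Lambda} \to \mb{\Lambda}'$ of action operads is precisely a family of group homomorphisms $f_n : \Lambda(n) \to \Lambda'(n)$ commuting with the $\pi_n$ and with operadic multiplication, which is exactly the data of an identity-on-objects functor $B\mb{\Lambda} \to B\mb{\Lambda}'$ over $B\mb{\Sigma}$ preserving the club multiplication and unit; this shows $\mb{\Lambda} \mapsto K_{\mb{\Lambda}}$ is functorial, and it is faithful because such a club morphism restricts on $(n,n)$-hom-sets to $f_n$. For fullness, let $\phi : K_{\mb{\Lambda}} \to K_{\mb{\Lambda}'}$ be a club morphism: as a functor over $B\mb{\Sigma}$ between two categories that are the identity on objects over $B\mb{\Sigma}$, it is the identity on objects; its restrictions $\phi_n : \Lambda(n) \to \Lambda'(n)$ to hom-sets are group homomorphisms by functoriality, are compatible with the $\pi_n$ because $\phi$ lies over $B\mb{\Sigma}$, and are compatible with operadic multiplication because $\phi$ preserves the club multiplication, which encodes $\mu$. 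Thus $\{\phi_n\}$ is a morphism of action operads inducing $\phi$, so $\mb{\Lambda} \mapsto K_{\mb{\Lambda}}$ is a full and faithful embedding; by the first part its essential image is exactly the clubs satisfying the two stated conditions.
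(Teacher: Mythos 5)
Your proposal is correct and follows essentially the same route as the paper: extract $\Lambda(n) = K(n,n)$, define $\beta$ and $\delta$ via the club multiplication applied to the two distinguished kinds of morphisms of $K \circ K$, verify the nine axioms of \cref{thm:charAOp} from the collection-map, unit, functoriality, and associativity properties of the club structure, and deduce fullness and faithfulness by reading a club morphism as a family of group homomorphisms compatible with $\pi$, $\beta$, and $\delta$. The only (welcome) additions beyond the paper's argument are the explicit observation that the multiplication is forced on objects and the explicit final identification of the club structure on $K$ with that on $B\mb{\Lambda}$.
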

\begin{proof}
Let $K$ be such a club.  Our hypotheses immediately imply that $K$ is a groupoid with objects in bijection with the natural numbers; we will now assume the functor $K \to B\mb{\Sigma}$ is the identity on objects.  Let $\Lambda(n) = K(n,n)$.  Now $K$ comes equipped with a functor to $B \mb{\Sigma}$, in other words group homomorphisms $\pi_{n}:\Lambda(n) \rightarrow \Sigma_{n}$.  We claim that the club structure on $K$ makes the collection of groups $\{ \Lambda(n) \}$ an action operad.  In order to do so, we will employ \cref{thm:charAOp}.

First, we give the group homomorphism $\beta$ using \cref{nota:clubmult}.  Define 
\[
\beta(g_{1}, \ldots, g_{n}) = e_{n}(g_{1}, \ldots, g_{n})
\]
 (see \ref{nota:clubmult}) where $e_{n}$ is the identity morphism $n \to n$ in $K(n,n)$.  Functoriality of the club multiplication map immediately implies that this is a group homomorphism.  Second, we define the function $\delta$ in a similar fashion:
\[
\delta_{n; k_{1}, \ldots, k_{n}}(f) = f(e_{1}, \ldots, e_{n}),
\]
where here $e_{i}$ is the identity morphism of $k_{i}$ in $K$.

There are now nine axioms to verify in \cref{thm:charAOp}.  The club multiplication functor is a map of collections, so a map over $B\mb{\Sigma}$; this fact immediately implies that axioms \eqref{eq1} (using morphisms in $K \circ K$ with only $g_{i}$ parts) and \eqref{eq4} (using morphisms in $K \circ K$ with only $f$ parts) hold.  The mere fact that multiplication is a functor also implies axioms \eqref{eq6} (once again using morphisms with only $f$ parts) and \eqref{eq8} (by considering the composite of a morphism with only an $f$ with a morphism with only $g_{i}$'s).  Axiom \eqref{eq2} is the equation $e_{1}(g) = g$ which is a direct consequence of the unit axiom for the club $K$; the same is true of axiom \eqref{eq5}.  Axioms \eqref{eq3}, \eqref{eq7},  and \eqref{eq9} all follow from the associativity of the club multiplication.

Finally, we would like to show that this gives a full and faithful embedding $K_{-}:\mb{AOp} \rightarrow \mb{Club}$ of the category of action operads into the category of clubs.  Let $f, f':\mb{\Lambda} \rightarrow \mb{\Lambda'}$ be maps between action operads.  Then if $K_{f} = K_{f'}$ as maps between clubs, then they must be equal as functors $K_{\mb{\Lambda}} \rightarrow K_{\mb{\Lambda'}}$.  But these functors are nothing more than the coproducts of the functors
\[
B(f_{n}), B(f_{n}'):B\Lambda(n) \rightarrow B\Lambda'(n),
\]
and the functor $B$ from groups to categories is faithful, so $K_{-}$ is also faithful.  Now let $f:K_{\mb{\Lambda}} \rightarrow K_{\mb{\Lambda'}}$ be a maps of clubs.  We clearly get group homomorphisms $f_{n}:\Lambda(n) \rightarrow \Lambda'(n)$ such that $\pi^{\Lambda}_{n} = \pi^{\Lambda'}_{n} f_{n}$, so we must only show that the $f_{n}$ also constitute an operad map.  Using the description of the club structure above in terms of the maps $\beta, \delta$, we see that commuting with the club multiplication implies commuting with both of these, which in turn is equivalent to commuting with operad multiplication.  Thus $K_{-}$ is full as well.
\end{proof}

\begin{rem}
First, one should note that being a club over $B \mb{\Sigma}$ means that every $K$-algebra has an underlying strict monoidal structure.  Second, requiring that $K \rightarrow B \mb{\Sigma}$ be bijective on objects ensures that $K$ does not have  operations other than $\otimes$, such as duals or internal hom-objects, from which to build new types of objects.  Finally, $K$ being a groupoid ensures that all of the ``constraint morphisms'' that exist in algebras for $K$ are invertible.

These hypotheses could be relaxed somewhat.  Instead of having a club over $B \mb{\Sigma}$, we could have a club over the free symmetric monoidal category on one object (note that the free symmetric monoidal category monad on $\mb{Cat}$ is still cartesian).  This would produce $K$-algebras with underlying monoidal structures which are not necessarily strict.  This change should have relatively little impact on how the theory is developed.  Changing $K$ to be a category instead of a groupoid would likely have a larger impact, as the resulting action operads would have monoids instead of groups at each level.  We have made repeated use of inverses throughout the proofs in the basic theory of action operads, and these would have to be revisited if groups were replaced by monoids in the definition of action operads.
\end{rem}

\section{Presentations of action operads}

One of the most useful methods for constructing new examples of some given algebraic structure is through the use of presentations.  A presentation consists of generating data together with relations between generators using the operations of the algebra involved.  In categorical terms, the generators  and relations are both given as free gadgets on some underlying data, and the presentation itself is a coequalizer.  This section will establish the categorical structure necessary to give presentations for action operads, and then explain how such a presentation is reflected in the associated club and 2-monad.  The most direct route to the desired results uses the theory of locally finitely presentable categories.  We recall the main definitions briefly, but refer the reader to \cite{ar} for additional details.

\begin{Defi}
A \textit{filtered category} is a nonempty category $C$ such that
\begin{itemize}
\item if $a,b$ are objects of $C$, then there is another object $c \in C$ and morphisms $a \to c, b \to c$; and
\item if $f,g \cn a \to b$ are parallel morphisms in $C$, then there exists a morphism $h \cn b \to c$ such that $hf = hg$.
\end{itemize}
\end{Defi}

\begin{Defi}
A \emph{filtered colimit} is a colimit over a filtered category.
\end{Defi}

\begin{Defi}
Let $C$ be a category with all filtered colimits.  An object $x \in C$ is \textit{finitely presentable} if the representable functor $C(x, -):C \rightarrow \mb{Sets}$ preserves filtered colimits.
\end{Defi}

\begin{Defi}
A \textit{locally finitely presentable category} is a category $C$ such that
\begin{itemize}
\item $C$ is cocomplete and
\item there exists a small subcategory $C_{fp} \subseteq C$ of finitely presentable objects such that any object $x \in C$ is the filtered colimit of some diagram in $C_{fp}$.
\end{itemize}
\end{Defi}

The definition of a locally finitely presentable category has many equivalent variants, and our applications are quite straightforward so we have given what is probably the most common version of the definition.  We refer the reader to \cite{ar} for more information.  The following result will be clear to the experts, so we just sketch a proof.

\begin{thm}
The category $\mb{AOp}$ is locally finitely presentable.
\end{thm}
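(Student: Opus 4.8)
The plan is to exhibit $\mathbf{AOp}$ as the category of models for a finite-limit sketch, or more directly, as a full reflective subcategory of a presheaf category closed under filtered colimits; either route identifies it as locally finitely presentable via the standard theory in \cite{ar}. I will take the sketch route, as it most transparently encodes the algebraic nature of the axioms. First I would observe that an action operad, by \cref{thm:charAOp}, is precisely a family of groups $\Lambda(n)$ together with homomorphisms $\pi_n \cn \Lambda(n) \to \Sigma_n$, the block-sum homomorphisms $\beta$, and the functions $\delta_{n;k_1,\ldots,k_n}$, all subject to axioms \eqref{eq1}--\eqref{eq9}. Each $\Lambda(n)$ with its group structure is the model of a finite-limit theory (a single sort with multiplication, unit, and inverse operations, subject to equational axioms); the homomorphism condition on $\pi_n$, the homomorphism condition on $\beta$, and each of the nine axioms are all expressible as commuting diagrams of finite products of these sorts. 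Crucially, the $\Sigma_n$ are \emph{fixed} finite sets, so each $\Sigma_n$ can be encoded as a finite coproduct $1 + 1 + \cdots + 1$ of terminal objects in the sketch, and the structure maps of the symmetric operad $\Sigma$ become specified morphisms between these finite coproducts. Thus the whole package is a finite-limit-plus-chosen-finite-colimit sketch on a (countable, but that is fine) signature.

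The key steps, in order: (1) set up the signature --- one sort $s_n$ for each natural number $n$, operations encoding the group structure on each $s_n$, operations $\pi_n$, $\beta$ (one for each tuple $(k_1,\ldots,k_n)$), and $\delta_{n;k_1,\ldots,k_n}$, together with the finite objects representing the $\Sigma_n$ and the fixed structure morphisms of $\Sigma$; (2) write axioms \eqref{eq1}--\eqref{eq9}, the group axioms, and the homomorphism conditions as commuting diagrams built from finite limits of the sorts and the chosen finite coproducts; (3) invoke the theorem (see \cite{ar}) that the category of set-valued models of a small sketch whose distinguished cones are finite and whose distinguished cocones are finite is locally finitely presentable; (4) check that this category of models is exactly $\mathbf{AOp}$ as defined in \cref{Defi:cat_aop}, which is immediate from \cref{thm:charAOp} once one notes that a morphism of action operads is exactly a natural transformation of models (it is a family of group homomorphisms commuting with $\pi$, hence automatically with $\beta$ and $\delta$, hence with operadic multiplication). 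Alternatively, and perhaps more cleanly, one can describe $\mathbf{AOp}$ directly as the category of monoids for the composition product on $[B\mathbf{\Sigma}^{\mathrm{op}}, \mathbf{Set}]$ equipped with the freeness/equivariance constraints, realize this as a category of algebras for a finitary monad on a locally finitely presentable category, and conclude by the theorem that algebras for a finitary monad on a locally finitely presentable category form a locally finitely presentable category.

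The main obstacle is purely bookkeeping rather than conceptual: the signature is genuinely infinite (one sort per $n$, and the $\beta$ and $\delta$ operations are indexed by arbitrary tuples of naturals), so one must be slightly careful that ``small sketch'' is satisfied --- it is, since a countable signature with countably many axioms is small --- and that the distinguished cones really are \emph{finite} limits, which holds because every operation has finite arity (each $\beta$ takes an $n$-fold product, each $\delta$ a single argument) and each $\Sigma_n$ is a finite object. Since the paper only asks for a sketch of proof, I would compress steps (1) and (2) to the remark that all the data and axioms in \cref{thm:charAOp} are visibly finitary and equational, cite \cite{ar} for the sketch theorem, and note step (4) as an immediate translation, flagging only that one must confirm finiteness of the arities and of the fixed symmetric-group data.
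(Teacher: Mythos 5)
Your overall strategy --- action operads are models of a finitary, essentially algebraic structure, hence form a locally finitely presentable category --- is in the same spirit as the paper's, but the theorem you invoke in step (3) is false as stated, and this is where the gap lies. The category of set-valued models of a small sketch with finite limit cones \emph{and} finite colimit cocones is finitely \emph{accessible}, not locally finitely presentable: mixed sketches can produce non-cocomplete categories (the category of fields, sketched using the cocone $K \cong \{0\} \sqcup K^{\times}$, is the standard counterexample). Cocones enter your sketch precisely because you encode each fixed $\Sigma_{n}$ as a finite coproduct $1 + \cdots + 1$ of terminal objects; once they are present you only get accessibility for free, and you would still owe a separate proof that $\mb{AOp}$ is cocomplete, which is most of the content of local finite presentability here.

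The paper avoids this by never putting $\Sigma$ into the theory. It first observes that the category $\mb{Op}^{g}$ of operads whose levels carry group structures is an equational (hence cocone-free) finitary theory and so locally finitely presentable, then uses that slices of locally finitely presentable categories are locally finitely presentable to handle the fixed map to $\Sigma$ via $\mb{Op}^{g}/\Sigma$, and finally exhibits $\mb{AOp}$ as a full reflective subcategory of $\mb{Op}^{g}/\Sigma$ closed under limits and filtered colimits, concluding by the Reflection Theorem (2.48 of \cite{ar}). Your opening sentence actually gestures at exactly this second route (a reflective subcategory closed under filtered colimits), and your other alternative (algebras for a finitary monad on a locally finitely presentable category) could also be made to work, but neither is developed; as written, the argument you do carry out rests on an incorrect citation. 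The repair is to replace the coproduct encoding of the $\Sigma_{n}$ by the slice construction, after which the bookkeeping in your steps (1), (2) and (4) goes through.
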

\begin{proof}
First, note that there is a category $\mb{Op}^{g}$ whose objects are operads $P$ in which each $P(n)$ also carries a group structure.  This is an equational theory using equations with only finitely many elements, so $\mb{Op}^{g}$ is locally finitely presentable.  The symmetric operad is an object of this category, so the slice category $\mb{Op}^{g}/\Sigma$ is also locally finitely presentable.  There is an obvious inclusion functor $\mb{AOp} \hookrightarrow \mb{Op}^{g}/\Sigma$.  Now $\mb{AOp}$ is a full subcategory of $\mb{Op}^{g}/\Sigma$ which is closed under products, subobjects, and any object of $\mb{Op}^{g}/\Sigma$ isomorphic to an action operad is in fact an action operad, so the inclusion   $\mb{AOp} \hookrightarrow \mb{Op}^{g}/\Sigma$ is actually the inclusion of a reflective subcategory.  One easily checks that $\mb{AOp}$ is in fact closed under all limits and filtered colimits in $\mb{Op}^{g}/\Sigma$, so by the Reflection Theorem (2.48 in \cite{ar}), $\mb{AOp}$ is locally finitely presentable. 
\end{proof}

\begin{Defi}
Let $\SS$ be the set which is the disjoint union of the underlying sets of all the symmetric groups.  Then $\mb{Sets}/\SS$ is the slice category over $\SS$ with objects $(X,f)$ where $X$ is a set and $f:X \rightarrow \SS$ and morphisms $(X_{1}, f_{1}) \rightarrow (X_{2}, f_{2})$ are those functions $g:X_{1} \rightarrow X_{2}$ such that $f_{1} = f_{2}g$.  We call an object $(X,f)$ a \textit{collection over $\SS$}.
\end{Defi}

\begin{rem}
In standard presentations of the theory of operads (see, for example, \cite{mss-op}), a nonsymmetric operad will have an underlying collection (or $\mathbb{N}$-indexed collection of sets) while a symmetric operad will have an underlying symmetric collection (or $\mathbb{N}$-indexed collection of sets in which the $n$th set has an action of $\Sigma_{n}$).  Our collections over $\SS$ more closely resemble the former as there is no group action present.
\end{rem}

\begin{thm}\label{underlyingSS}
There is a forgetful functor $U:\mb{AOp} \rightarrow \mb{Sets}/\SS$ which preserves all limits and filtered colimits.
\end{thm}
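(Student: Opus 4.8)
The plan is to exhibit $U$ as a composite of three functors, each of which manifestly preserves all limits and all filtered colimits, reusing the category $\mb{Op}^{g}/\Sigma$ and the reflection appearing in the proof that $\mb{AOp}$ is locally finitely presentable. Let $V\cn\mb{Op}^{g}\to\mb{Sets}^{\mathbb{N}}$ send an operad-with-group-structure $P$ to its underlying $\mathbb{N}$-indexed family of sets $\{P(n)\}$, so that $V\Sigma=\{\Sigma_{n}\}$. Slicing over the symmetric operad produces $V/\Sigma\cn\mb{Op}^{g}/\Sigma\to\mb{Sets}^{\mathbb{N}}/V\Sigma$, and there is a canonical isomorphism $\mb{Sets}^{\mathbb{N}}/V\Sigma\cong\mb{Sets}/\SS$: a family $\{X_{n}\}$ equipped with functions $X_{n}\to\Sigma_{n}$ is the same datum as the single set $\coprod_{n}X_{n}$ together with a function to $\SS=\coprod_{n}\Sigma_{n}$. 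I would then define $U$ as the composite
\[
\mb{AOp}\hookrightarrow\mb{Op}^{g}/\Sigma\xrightarrow{\;V/\Sigma\;}\mb{Sets}^{\mathbb{N}}/V\Sigma\cong\mb{Sets}/\SS ,
\]
and check that, after unwinding the definitions, it carries an action operad $\mb{\Lambda}$ to the collection $\coprod_{n}\Lambda(n)\to\SS$ with structure map $\coprod_{n}\pi_{n}$, and a map of action operads to the evident induced map of collections; this is exactly the forgetful functor named in the statement.

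It then suffices to see that each of the three functors preserves all limits and all filtered colimits. The inclusion $\mb{AOp}\hookrightarrow\mb{Op}^{g}/\Sigma$ preserves all limits since it is a right adjoint — the inclusion of the reflective subcategory exhibited in the previous proof — and it preserves filtered colimits because, as noted there, $\mb{AOp}$ is closed in $\mb{Op}^{g}/\Sigma$ under filtered colimits, so that such a colimit computed in $\mb{Op}^{g}/\Sigma$ already lies in $\mb{AOp}$ and therefore is the colimit there. The functor $V$ is the forgetful functor of a finitary $\mathbb{N}$-sorted equational theory: operadic composition, the operadic unit, and the group multiplication, inverse and unit at each arity are all finitary operations and all the axioms are equations, so $\mb{Op}^{g}$ is (equivalent to) the category of algebras for a finitary monad on $\mb{Sets}^{\mathbb{N}}$ with $V$ the associated forgetful functor, which therefore creates, and in particular preserves, all limits and all filtered colimits. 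Finally, slicing preserves both properties. For filtered colimits this is immediate, because the forgetful functors off the slices $\mb{Op}^{g}/\Sigma\to\mb{Op}^{g}$ and $\mb{Sets}^{\mathbb{N}}/V\Sigma\to\mb{Sets}^{\mathbb{N}}$ create all colimits while $V$ preserves the filtered ones. For limits one uses that a $J$-shaped limit in a slice $\mathcal{C}/c$ has as underlying object the limit in $\mathcal{C}$ of the diagram obtained from $J$ by freely adjoining a terminal object with value $c$ (its structure map being the leg at the new object); since $V$ preserves all limits it preserves this latter limit, whence $V/\Sigma$ preserves $J$-shaped limits. Composing the three functors gives the theorem.

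There is really no obstacle here; the one point requiring a moment's care is the reduction in the last sentence about limits, which is needed because the forgetful functor off a slice does not itself preserve products or the terminal object. Recasting a limit in the slice as a limit in the base over $J$ with an adjoined terminal object removes this, since then $V$ is assumed to preserve everything in sight, and all remaining checks are routine bookkeeping about slice categories and about forgetful functors of equational theories.
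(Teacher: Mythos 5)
Your proof is correct and follows essentially the same route as the paper's: both rest on the facts that limits and filtered colimits in $\mb{AOp}$ are inherited from $\mb{Op}^{g}/\Sigma$ and computed levelwise on underlying sets, with products appearing as levelwise pullbacks over $\Sigma$ that are carried to the corresponding fibre products over $\SS$ in the slice. The only difference is presentational: you derive these facts formally from the reflective inclusion, the monadicity of the finitary equational theory, and the description of slice-category limits as limits over the diagram with an adjoined terminal object, whereas the paper simply cites the concrete levelwise descriptions; your packaging has the minor virtue of making fully explicit why products in the slice (which are not preserved by the forgetful functor off the slice) are nonetheless handled correctly.
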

\begin{proof}
The functor $U$ is obvious, and the preservation of filtered colimits follows from the fact that these are computed pointwise, together with the fact that every map between action operads preserves underlying permutations.  For limits, recall that limits in $\mb{AOp}$ are computed as in the category of operads over $\Sigma$.  This means that equalizers are computed levelwise, and the product $\mb{\Lambda} \times \mb{\Lambda}'$ has underlying operad the pullback $\Lambda \times_{\Sigma} \Lambda'$; this pullback is itself computed levelwise.  Together, these imply that $U$ also preserves all limits.
\end{proof}

\begin{cor}
$U$ has a left adjoint $F:\mb{Sets}/\SS \rightarrow \mb{AOp}$, the free action operad functor.
\end{cor}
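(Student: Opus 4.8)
The plan is to derive this from the adjoint functor theorem for locally finitely presentable categories: a functor between locally finitely presentable categories has a left adjoint exactly when it preserves all small limits and filtered colimits (see \cite[Theorem 1.66]{ar}). Every ingredient needed to invoke it is already in place, so the proof is purely a matter of assembling them.

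First, $\mb{Sets}/\SS$ is locally finitely presentable, being the slice of the locally finitely presentable category $\mb{Sets}$ over the object $\SS$, and slices of locally finitely presentable categories are again locally finitely presentable (with the finitely presentable objects being those maps $X \to \SS$ whose domain is finite). Second, $\mb{AOp}$ is locally finitely presentable by the theorem proved above. Third, \cref{underlyingSS} asserts precisely that $U \cn \mb{AOp} \rightarrow \mb{Sets}/\SS$ preserves all limits and filtered colimits. Applying the cited theorem, $U$ admits a left adjoint $F$, and since it is left adjoint to a forgetful functor we are entitled to call it the free action operad functor.

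If one prefers a more concrete argument, one can instead invoke the General Adjoint Functor Theorem: $\mb{AOp}$ is complete (local presentability implies completeness) and $U$ preserves limits, so it remains only to exhibit, for each collection $(X,f)$ over $\SS$, a solution set. Because $\mb{AOp}$ is locally finitely presentable, every morphism $(X,f) \rightarrow U\mb{\Lambda}$ factors through the image under $U$ of the sub-action-operad of $\mb{\Lambda}$ generated by the (size-bounded) image of $X$, and a representative set of such sub-action-operads is the required solution set. Either route produces the desired left adjoint $F$.

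There is essentially no real obstacle here; the substantive content lies entirely in the two preceding results — that $\mb{AOp}$ is locally finitely presentable and that $U$ preserves limits and filtered colimits — and this statement is a formal consequence. The only point demanding any care is choosing the correct form of the adjoint functor theorem, since a limit-preserving functor out of a locally presentable category need not have a left adjoint unless it is also accessible, a condition supplied here exactly by the preservation of filtered colimits recorded in \cref{underlyingSS}.
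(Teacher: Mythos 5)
Your proposal is correct and follows essentially the same route as the paper: both establish that $\mb{Sets}/\SS$ is locally finitely presentable (the paper via its equivalence with the presheaf category $[\SS,\mb{Sets}]$, you via the slice-category argument, which amounts to the same thing) and then apply Theorem 1.66 of \cite{ar} to the limit- and filtered-colimit-preserving functor $U$ from \cref{underlyingSS}. The additional remarks about the General Adjoint Functor Theorem are a fine alternative but not needed.
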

\begin{proof}
The category $\mb{Sets}/\SS$ is locally finitely presentable as it is equivalent to the functor category $[\SS, \mb{Sets}]$ (here $\SS$ is treated as a discrete category) and any presheaf category is locally finitely presentable.  The functor $U$ preserves limits and filtered colimits between locally finitely presentable categories, so has a left adjoint (see Theorem 1.66 in \cite{ar}).
\end{proof}

\begin{Defi}
A \textit{presentation} for an action operad $\mb{\Lambda}$ consists of
\begin{itemize}
\item a pair of collections over $\SS$ denoted $\mathbf{g}, \mathbf{r}$,
\item a pair of maps $s_{1}, s_{2}:F\mathbf{r} \rightarrow F\mathbf{g}$ between the associated free action operads, and
\item a map $p:F\mathbf{g} \rightarrow \mb{\Lambda}$ of action operads exhibiting $\mb{\Lambda}$ as the coequalizer of $s_{1},s_{2}$.
\end{itemize}
\end{Defi}

In \cite{kelly_club1}, Kelly discusses clubs given by generators and relations.  His generators include functorial operations more general than what we are interested in here, and the natural transformations are not required to be invertible.  In our case, the only generating operations we require are those of a unit and tensor product, as the algebras for $E\Lambda$ are always strict monoidal categories with additional structure.  Tracing through his discussion of generators and relations for a club gives the following theorem.

\begin{thm}\label{pres1}
Let $\mb{\Lambda}$ be an action operad with presentation given by $(\mathbf{g},\mathbf{r}, s_{i}, p)$.  Then the club $E\Lambda$ is generated by
\begin{itemize}
\item functors giving the unit object and tensor product, and
\item natural transformations given by the collection $\mathbf{g}$:  each element $x$ of $\mathbf{g}$ with $\pi(x) = \sigma_{x} \in \Sigma_{|x|}$ gives a natural transformation from the $n$th tensor power functor to itself,
\end{itemize}
subject to relations such that the following axioms hold.
\begin{itemize}
\item The monoidal structure given by the unit and tensor product is strict.
\item The transformations given by the elements of $\mathbf{g}$ are all natural isomorphisms.
\item For each element $y \in \mathbf{r}$, the equation $s_{1}(y) = s_{2}(y)$ holds.
\end{itemize}
\end{thm}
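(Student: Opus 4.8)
The plan is to unwind the construction $\mb{\Lambda} \mapsto K_{\mb{\Lambda}} = E\Lambda$ along the presentation coequalizer and match it against Kelly's description of a club presented by generators and relations. First I would recall that by \cref{thm:club=operad} the assignment $K_{-}\colon \mb{AOp} \to \mb{Club}$ is a full and faithful embedding landing in the clubs over $B\mb{\Sigma}$ that are groupoids and bijective on objects; moreover one checks directly that $K_{-}$ is a left adjoint (or at least preserves the relevant colimits), so that it sends the coequalizer $F\mathbf{r} \rightrightarrows F\mathbf{g} \to \mb{\Lambda}$ to a coequalizer $K_{F\mathbf{r}} \rightrightarrows K_{F\mathbf{g}} \to K_{\mb{\Lambda}} = E\Lambda$ in the category of clubs. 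Since the category $\mb{AOp}$ is locally finitely presentable and $U\colon \mb{AOp} \to \mb{Sets}/\SS$ is a finitary right adjoint, reflexive coequalizers and filtered colimits are computed compatibly, so this is legitimate; the key point is that presenting $\mb{\Lambda}$ as a coequalizer of free action operads presents $E\Lambda$ as a corresponding coequalizer of free clubs.

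Next I would identify the free club $K_{F\mathbf{g}}$ on a collection $\mathbf{g}$ over $\SS$ with the club generated by the stated data: for each $x \in \mathbf{g}$ with $\pi(x) = \sigma_x \in \Sigma_{|x|}$, a natural isomorphism of the $|x|$-fold tensor power functor with itself lying over the permutation $\sigma_x$, with no relations beyond strict associativity and unitality of $\otimes$. This is exactly Kelly's free club on a suitable system of generators \cite{kelly_club1}, once one observes that our generators are restricted to natural transformations between tensor powers of a single variable (because $K \to B\mb{\Sigma}$ is bijective on objects, so there are no operations like duals or internal homs) and that these transformations are required to be invertible (because $K$ is a groupoid). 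The translation between "element $x$ of $\mathbf{g}$ lying over $\sigma_x \in \Sigma_{|x|}$" and "generating natural iso $x^{\otimes |x|} \Rightarrow x^{\otimes |x|}$ over $\sigma_x$" is precisely the correspondence, established in \cref{thm:charAOp} and the discussion preceding it, between elements of $\Lambda(n)$ and the operations $\beta, \delta$ that assemble operadic multiplication; I would spell out that the free action operad on $\mathbf{g}$ has its underlying club presented this way.

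Then, passing to the coequalizer, the two maps $s_1, s_2\colon F\mathbf{r} \to F\mathbf{g}$ become, under $B(-)$, two ways of interpreting each generator $y \in \mathbf{r}$ as a formal composite of the $\mathbf{g}$-generators and the structural isomorphisms; the coequalizer $E\Lambda$ is thus the club obtained by additionally imposing $s_1(y) = s_2(y)$ for each $y \in \mathbf{r}$, which is the third bulleted relation. The first two bulleted axioms (strictness of $\otimes$, invertibility of the $\mathbf{g}$-transformations) are already built into $K_{F\mathbf{g}}$ as just described, and they are preserved by the coequalizer since quotienting a groupoid-over-$B\mb{\Sigma}$-club by relations of the stated form again yields a groupoid-over-$B\mb{\Sigma}$-club — indeed this is forced because $E\Lambda = K_{\mb{\Lambda}}$ is, by \cref{thm:club=operad}, a bijective-on-objects groupoid club. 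I expect the main obstacle to be making precise and verifying the claim that $K_{-}$ sends the presentation coequalizer to a club coequalizer with the expected underlying category; this requires knowing that the forgetful functor from clubs (to $\mb{Cat}/B\mb{\Sigma}$, or to the relevant category of collections) creates the needed reflexive coequalizers, together with the identification of the free club on generators with Kelly's, so the bulk of the work is bookkeeping to align our conventions with those of \cite{kelly_club1} rather than any genuinely new argument — which is why I would, as the theorem statement itself signals, present it as "tracing through" Kelly's discussion.
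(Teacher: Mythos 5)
Your proposal matches the paper's approach: the paper offers no formal proof of \cref{pres1}, stating only that the result follows by ``tracing through'' Kelly's discussion of clubs given by generators and relations, and your argument is exactly that translation carried out in more detail --- identify $K_{F\mathbf{g}}$ with Kelly's free club on invertible one-variable generators over $B\mb{\Sigma}$, then pass the presentation coequalizer through $K_{-}$. The one claim I would soften is that $K_{-}$ is a left adjoint (the paper never asserts this and it is not obvious); what is actually needed, and what you correctly isolate as the main point to verify, is only that the coequalizer $F\mathbf{r} \rightrightarrows F\mathbf{g} \to \mb{\Lambda}$ is carried to a presentation of the club $E\Lambda$, which follows since the maps involved are bijective on objects over $B\mb{\Sigma}$ and quotients of groupoid clubs by such relations remain groupoid clubs in the essential image of $K_{-}$.
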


Bringing this down to a concrete level we have the following corollary.

\begin{cor}\label{pres2}
Assume we have a notion $\mathcal{M}$ of strict monoidal category which is given by  a set natural isomorphisms
\[
\mathcal{G} = \{ (f, \pi_{f}) \, | \,  x_{1} \otimes \cdots \otimes x_{n} \stackrel{f}{\longrightarrow} x_{\pi_{f}^{-1}(1)} \otimes \cdots \otimes x_{\pi_{f}^{-1}(n)} \}
\]
subject to a set $\mathcal{R}$ of axioms.  Each such axiom is given by the data
\begin{itemize}
\item two finite sets $f_{1}, \ldots, f_{n}$ and $f_{1}', \ldots, f_{m}'$ of elements of $\mathcal{G}$; and
\item two formal composites $F,F'$ using only composition and tensor product operations and the $f_{i}$, respectively $f_{i}'$, 
\end{itemize}
such that the underlying permutation of $F$ equals the underlying permutation of $F'$ (we compute the underlying permutations using the functions $\beta, \delta$ of \cref{thm:charAOp}).  The element $(\underline{f}, \underline{f}', F, F')$ of the set $\mathcal{R}$ of axioms corresponds to the requirement that the composite of the morphisms $f_{i}$ using $F$ equals the composite of the morphisms $f_{j}'$ using $F'$ in any strict monoidal category of type $\mathcal{M}$.  Then strict monoidal categories of type $\mathcal{M}$ are given as the algebras for the club $E\Lambda$ where $\mb{\Lambda}$ is the action operad with
\begin{itemize}
\item $\mathbf{g} = \mathcal{G}$,
\item $\mathbf{r} = \mathcal{R}$,
\item $s_{1}$ given by mapping the generator $(\underline{f}, \underline{f}', F, F')$ to the operadic composition of the $f_{i}$ using $F$ via $\beta, \delta$, and
\item $s_{2}$ given by mapping the generator $(\underline{f}, \underline{f}', F, F')$ to the operadic composition of the $f_{i}'$ using $F'$ via $\beta, \delta$.
\end{itemize}
\end{cor}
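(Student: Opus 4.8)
The plan is to deduce the statement directly from \cref{pres1} by matching the abstract presentation data with the concrete data $\mathcal{G}$, $\mathcal{R}$ and then unwinding what it means to be an algebra for the resulting club. Let $\mb{\Lambda}$ be the action operad presented by $\mathbf{g} = \mathcal{G}$, $\mathbf{r} = \mathcal{R}$, and the maps $s_{1}, s_{2}$ described in the statement. Before even forming this presentation one must check that $s_{1}$ and $s_{2}$ are genuine maps of action operads $F\mathbf{r} \to F\mathbf{g}$; since $F\mathbf{r}$ is free this only requires that, for each axiom $(\underline{f}, \underline{f}', F, F')$ in $\mathcal{R}$, the two elements of $F\mathbf{g}$ obtained by forming the $\beta$-$\delta$-composites $F$ and $F'$ of the generators have the same image in $\Sigma$ under $\pi$. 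By \cref{thm:charAOp} that image is exactly the underlying permutation of $F$, respectively $F'$, computed via $\beta$ and $\delta$, so this is precisely the hypothesis built into the definition of $\mathcal{R}$.

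Next I would invoke \cref{pres1} for this $\mb{\Lambda}$. It says that an $E\Lambda$-algebra is a strict monoidal category equipped with, for each generator $x \in \mathbf{g} = \mathcal{G}$, a natural transformation of the $|x|$-fold tensor power functor twisting objects according to $\pi(x) = \sigma_{x}$, subject only to the requirements that the monoidal structure be strict, that these transformations be invertible, and that $s_{1}(y) = s_{2}(y)$ for every $y \in \mathbf{r} = \mathcal{R}$. The first two conditions, together with the data of the $\mathcal{G}$-transformations, say exactly that we are given a strict monoidal category equipped with the natural isomorphisms of $\mathcal{G}$ --- this is the structural part of being of type $\mathcal{M}$. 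So the only thing left is to identify the relational part: that imposing $s_{1}(y) = s_{2}(y)$ in the free action operad, once interpreted in an algebra, is the same as requiring, for the axiom $(\underline{f}, \underline{f}', F, F')$, that the composite of the $f_{i}$ built by $F$ equal the composite of the $f_{j}'$ built by $F'$.

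This last point is the heart of the matter, and is a translation between the free action operad $F\mathbf{g}$ and formal composites of the $\mathcal{G}$-transformations in an arbitrary strict monoidal category. There, a natural transformation $\phi$ of the $n$-fold tensor power can be tensored with identity transformations, composed vertically with other such transformations of the same power, and evaluated at ``grouped'' tuples of objects; under the description in \cref{thm:charAOp} these three operations are exactly $\beta$, multiplication inside some $\Lambda(k)$, and $\delta$, and every axiom of \cref{thm:charAOp} holds automatically for these operations on natural transformations (for instance axiom~\eqref{eq8} is an instance of the interchange law together with naturality). Consequently a formal composite of the $f_{i}$ using only composition and tensor product is precisely a $\beta$-$\delta$-word in the generators, i.e.\ an element of $F\mathbf{g}$ --- here I would appeal to the explicit construction of the free action operad and to Kelly's generators-and-relations description of clubs underlying \cref{pres1} --- and evaluating such an element in an $E\Lambda$-algebra returns exactly the composite morphism built by $F$. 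Under this dictionary $s_{1}(y) = s_{2}(y)$ translates into the asserted equality of composites and conversely, so the $E\Lambda$-algebras are exactly the strict monoidal categories of type $\mathcal{M}$ (with strict algebra maps corresponding to strict monoidal functors strictly preserving the $\mathcal{G}$-structure). The main obstacle is exactly this translation: pinning down that elements of $F\mathbf{g}$ are $\beta$-$\delta$-words modulo the axioms of \cref{thm:charAOp} and nothing more, so that neither side of the correspondence picks up spurious identifications; granting the free action operad construction, verifying this and the routine compatibilities is where the remaining work lies.
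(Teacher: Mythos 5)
Your proposal is correct and follows the same route as the paper, which presents this corollary as a direct consequence of \cref{pres1} obtained by matching the concrete data $(\mathcal{G},\mathcal{R})$ with the abstract presentation $(\mathbf{g},\mathbf{r},s_i,p)$ and unwinding what an $E\Lambda$-algebra is; the paper in fact gives no further argument beyond that deduction. Your elaboration of the dictionary between formal composites of the $\mathcal{G}$-isomorphisms and $\beta$-$\delta$-words in the free action operad is exactly the implicit content the paper leaves to the reader.
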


\section{Examples}

In this section we will discuss examples of the preceding theory.  We have seen that there are three equivalent incarnations of the same algebraic structure:
\begin{itemize}
\item as an action operad $\mb{\Lambda}$,
\item as a 2-monad $X \mapsto \coprod E\Lambda(n) \times_{\Lambda(n)} X^{n}$ on $\mb{Cat}$, or
\item as a club $B\Lambda \rightarrow B\Sigma$ satisfying certain properties.
\end{itemize}
In practice, something like the third of these is the most likely to arise from applications (even if the notion of a club is perhaps less well-known outside of the categorical literature than that of an operad or a 2-monad) as a club can be given by a presentation as we discussed in Section 3.  We will go into more detail here, explaining how particular monoidal structures of interest are represented in this theory.

\begin{example}
The 2-monad for symmetric strict monoidal categories (or permutative categories, as they are known in the topological literature) is given by $E \mb{\Sigma}$, so the notion of symmetric strict monoidal categories corresponds to the symmetric operad.  While this example is well-known, we go into further detail to set the stage for less common examples.

The 2-monad $E\Sigma$ on $\mb{Cat}$ is given by
\[
E\Sigma (X) = \coprod E\Sigma_{n} \times_{\Sigma_{n}} X^{n}.
\]
An object of $E\Sigma_{n} \times_{\Sigma_{n}} X^{n}$ is an equivalence class of the form $[\sigma; x_{1}, \ldots, x_{n}]$ where $\sigma \in \Sigma_{n}$ and $x_{i} \in X$.  The equivalence relation gives
\[
[\sigma; x_{1}, \ldots, x_{n}] = [e; x_{\sigma^{-1}(1)}, \ldots, x_{\sigma^{-1}(n)}],
\]
so objects can be identified with finite strings of objects of $X$.  Morphisms are given by equivalence classes of the form
\[
[\sigma; x_{1}, \ldots, x_{n}] \stackrel{[!; f_{1}, \ldots, f_{n}]}{\longrightarrow} [\tau; y_{1}, \ldots, y_{n}].
\]
Here $!:\sigma \cong \tau$ is the unique isomorphism in $E \Sigma_{n}$, and $f_{i}:x_{i} \rightarrow y_{i}$ in $X$.  Using the equivalence relation, we get that morphisms between finite strings
\[
x_{1}, \ldots, x_{n} \rightarrow y_{1}, \ldots, y_{n}
\]
are given by a permutation $\rho \in \Sigma_{n}$ together with maps $f_{i}:x_{i} \rightarrow y_{\rho(i)}$ in $X$ (note that there are no morphisms between strings of different length); this is a special case of the calculation in \ref{hom-set-lemma}.  Thus $E \Sigma(X)$ is easily seen to be the free permutative category generated by $X$, and therefore $E \Sigma$-algebras are permutative categories.
\end{example}

\begin{example}
The template above can be used to show that the braid operad $\mb{B}$ corresponds to the 2-monad for braided strict monoidal categories.  The details are almost exactly the same, only we use braids instead of permutations.  The equivalence relation on objects gives
\[
[\gamma; x_{1}, \ldots, x_{n}] = [e; x_{\pi(\gamma)^{-1}(1)}, \ldots, x_{\pi(\gamma)^{-1}(n)}],
\]
where $\gamma \in B_{n}$ and $\pi(\gamma)$ is its underlying permutation; thus objects of $EB(X)$ are once again finite strings of objects of $X$.  A morphism
\[
x_{1}, \ldots, x_{n} \rightarrow y_{1}, \ldots, y_{n}
\]
is then given by a braid $\gamma \in B_{n}$ together with maps $f_{i}:x_{i} \rightarrow y_{\pi(\gamma)(i)}$ in $X$.  Thus one should view a morphism as given by
\begin{itemize}
\item a finite ordered set $x_{1}, \ldots, x_{n}$ of objects of $X$ as the source,
\item another such finite ordered set (of the same cardinality) $y_{1}, \ldots, y_{n}$ of objects of $X$ as the target,
\item a geometric braid $\gamma \in B_{n}$ on $n$ strands, and
\item for each strand, a morphism in $X$ from the object labeling the source of that strand to the object labeling the target.
\end{itemize}
This is precisely Joyal and Street's \cite{js} construction of the free braided strict monoidal category generated by a category $X$, and thus we see that $EB$-algebras are braided strict monoidal categories.

This example can be extended to include ribbon braided categories as well.  A \textit{ribbon braid} is given, geometrically, in much the same way as a braid except that instead of paths $[0,1] \rightarrow \mathbb{R}^{3}$ making up each individual strand, we use ribbons
$[0,1] \times [-\varepsilon, \varepsilon] \rightarrow \mathbb{R}^{3}$.  This introduces the possibility of performing a full twist on a ribbon, and one can describe ribbon braided categories using generators and relations by introducing a natural twist isomorphism $\tau_{A}:A \rightarrow A$ and imposing one relation between the twist and the braid $\gamma_{A,B}:A \otimes B \rightarrow B \otimes A$.  In \cite{sal-wahl}, the authors show that the ribbon braid groups give an action operad $\mb{RB}$, and that (strict) ribbon braided categories are precisely the algebras for $ERB$.
\end{example}

We now turn to an example which is not as widely known in the categorical literature, that of coboundary categories \cite{drin-quasihopf}.  These arise in the representation theory of quantum groups and in the theory of crystals \cite{hk-cobound, hk-quantum}.  Our goal here is to refine the relationship between coboundary categories and the operad of $n$-fruit cactus groups in \cite{hk-cobound} by using the theory of action operads and our Borel construction.  We begin by recalling the definition of a coboundary category.

\begin{Defi}
A \textit{coboundary category} is a monoidal category $C$ equipped with a natural isomorphism $\sigma_{x,y}:x \otimes y \rightarrow y \otimes x$ (called the \textit{commutor}) such that
\begin{itemize}
\item $\sigma_{y,x} \circ \sigma_{x,y} = 1_{x \otimes y}$ and
\item the diagram
\[
\xy
(0,0)*+{(x \otimes y) \otimes z} ="00";
(35,0)*+{x \otimes (y \otimes z)} ="10";
(70,0)*+{x \otimes (z \otimes y)} ="20";
(0,-15)*+{(y \otimes x) \otimes z} ="01";
(35,-15)*+{z \otimes (y \otimes x)} ="11";
(70,-15)*+{(z \otimes y )\otimes x} ="21";
{\ar "00"; "10" };
{\ar^{1 \sigma_{y,z}} "10"; "20" };
{\ar^{\sigma_{x,zy}} "20"; "21" };
{\ar_{\sigma_{x,y}1} "00"; "01" };
{\ar_{\sigma_{yx,z}} "01"; "11" };
{\ar "11"; "21" };
\endxy
\]
commutes (in which the unlabeled morphisms are an associator and an inverse associator).
\end{itemize}
\end{Defi}

\begin{example}
\begin{enumerate}
\item From the definition above, it is clear that any symmetric monoidal category is also a coboundary category.
\item The name coboundary category comes from the original work of Drinfeld \cite{drin-quasihopf} in which he shows that the category of representations of coboundary Hopf algebra has the structure of coboundary category.
\item Henriques and Kamnitzer \cite{hk-cobound} show that the category of crystals for a finite dimensional complex reductive Lie algebra has the structure of a coboundary category. 
\end{enumerate}
\end{example}

Our interest is in strict coboundary categories by which we mean coboundary categories with strict underlying monoidal category.  Under the assumption of strictness, the second axiom above does not include associations for the tensor product and reduces to a square.  To show that every coboundary category is equivalent to a strict coboundary category, we must introduce the 2-category $\mb{CobCat}$ of coboundary categories.

\begin{Defi}
Let $(C,\sigma), (C', \sigma')$ be coboundary categories.  A \emph{coboundary functor} $F:C \rightarrow C'$ is a strong monoidal functor (with invertible constraints $\varphi_{0}$ for the unit and $\varphi_{x,y}$ for the tensor product) such that
\[
F\sigma_{x,y} \circ \varphi_{x,y} = \varphi_{y,x} \circ \sigma_{Fx,Fy}'
\]
holds.
\end{Defi}

Coboundary functors are composed just as strong monoidal functors are, giving the following.

\begin{lem}
There is a 2-category $\mb{CobCat}$  of coboundary categories, coboundary functors, and monoidal transformations.
\end{lem}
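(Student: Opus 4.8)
The plan is to realize $\mb{CobCat}$ as a locally full sub-2-category of the 2-category $\mb{MonCat}$ of monoidal categories, strong monoidal functors, and monoidal transformations, whose existence I take as known. Its objects are the monoidal categories equipped with a commutor satisfying the two coboundary axioms, its $1$-cells are the coboundary functors, and its $2$-cells are \emph{all} monoidal transformations between coboundary functors, with composition, identities, and whiskering inherited from $\mb{MonCat}$. Since no condition is imposed on $2$-cells, identity $2$-cells and vertical composites automatically stay in $\mb{CobCat}$, and horizontal composites do too once we know the $1$-cells are closed under composition; hence the only real content is checking that the class of coboundary functors contains identities and is closed under composition.

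First I would check identities: for $\id_{C}\cn C\to C$ the monoidal constraints are identities, so the defining equation $F\sigma_{x,y}\circ\varphi_{x,y}=\varphi_{y,x}\circ\sigma'_{Fx,Fy}$ degenerates to $\sigma_{x,y}=\sigma_{x,y}$. Then I would check composites: given coboundary functors $F\cn(C,\sigma)\to(C',\sigma')$ and $G\cn(C',\sigma')\to(C'',\sigma'')$ with constraints $\varphi^{F},\varphi^{G}$, the composite $GF$ is strong monoidal with tensor constraint $(GF)_{x,y}=G(\varphi^{F}_{x,y})\circ\varphi^{G}_{Fx,Fy}$ and the analogous unit constraint, and a one-step diagram chase verifies the coboundary equation for $GF$:
\[
GF(\sigma_{x,y})\circ(GF)_{x,y}=G\big(F\sigma_{x,y}\circ\varphi^{F}_{x,y}\big)\circ\varphi^{G}_{Fx,Fy}=G\big(\varphi^{F}_{y,x}\circ\sigma'_{Fx,Fy}\big)\circ\varphi^{G}_{Fx,Fy},
\]
where the last equality is the coboundary equation for $F$; expanding $G$ on the composite and then applying the coboundary equation for $G$ at the objects $Fx,Fy$ rewrites this as $G(\varphi^{F}_{y,x})\circ\varphi^{G}_{Fy,Fx}\circ\sigma''_{GFx,GFy}=(GF)_{y,x}\circ\sigma''_{GFx,GFy}$, as required.

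With $1$-cell closure in hand, associativity and unitality of $1$-cell composition, the interchange law, and the remaining $2$-category axioms for $\mb{CobCat}$ are inherited verbatim from $\mb{MonCat}$, so nothing further needs to be verified. I do not expect any genuine obstacle here: the single point of substance is the stability of the coboundary condition under composition of strong monoidal functors, which is the short computation displayed above; everything else is bookkeeping inside the ambient 2-category of monoidal categories.
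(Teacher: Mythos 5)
Your proposal is correct and matches the paper's approach: the paper offers only the remark that ``coboundary functors are composed just as strong monoidal functors are,'' which is precisely your realization of $\mb{CobCat}$ as a locally full sub-2-category of $\mb{MonCat}$, with the one substantive check being that the coboundary equation is stable under composition of strong monoidal functors. Your displayed computation fills in that check correctly.
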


\begin{prop}
Let $(C, \sigma)$ be a coboundary category.  Then there is a strict coboundary category $(C', \sigma')$ which is equivalent to $(C, \sigma)$ in $\mb{CobCat}$.
\end{prop}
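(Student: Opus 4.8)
The plan is to strictify the underlying monoidal category by Mac Lane's coherence theorem and then transport the commutor along the resulting equivalence. Concretely, choose a strict monoidal category $C'$ equipped with a monoidal equivalence to $C$; since an internal equivalence in the $2$-category of monoidal categories, strong monoidal functors, and monoidal transformations can be promoted to an adjoint equivalence, we may fix a strong monoidal equivalence $G \colon C' \to C$ (with invertible constraints $\psi_{0}$ and $\psi_{x,y} \colon Gx \otimes Gy \to G(x \otimes y)$) whose pseudo-inverse $H \colon C \to C'$ is again strong monoidal, together with monoidal unit and counit. Because $G$ is fully faithful, for each pair of objects $x,y$ of $C'$ there is a unique morphism $\sigma'_{x,y} \colon x \otimes y \to y \otimes x$ in $C'$ with
\[
G\sigma'_{x,y} = \psi_{y,x} \circ \sigma_{Gx,Gy} \circ \psi_{x,y}^{-1};
\]
equivalently, once $C'$ carries $\sigma'$, this is precisely the condition that $G$ be a coboundary functor in the sense of the definition above.

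First I would verify that $(C',\sigma')$ is a coboundary category. Naturality of $\sigma'$ follows from naturality of $\sigma$ and of $\psi$ by applying $G$ to the naturality square and invoking faithfulness of $G$. The involution axiom is handled the same way: applying $G$ and substituting the defining formula makes the left-hand side collapse via $\sigma_{Gy,Gx}\circ\sigma_{Gx,Gy} = 1_{Gx \otimes Gy}$ in $C$, and faithfulness transports the identity back to $C'$. For the coboundary hexagon one again applies $G$, expands each edge using the definition of $\sigma'$ together with the strong monoidal coherence of $G$ (so that $G$ carries the strict triple tensor products of $C'$ to the bracketed ones of $C$ compatibly), and reduces the diagram to the hexagon already assumed in $C$; faithfulness of $G$ then yields the required identity in $C'$, which is automatically the strict (square) version since the monoidal structure of $C'$ is strict. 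Hence $(C',\sigma')$ is a strict coboundary category.

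Finally I would note that $G \colon (C',\sigma') \to (C,\sigma)$ is, by construction, a coboundary functor, and that its pseudo-inverse $H$ acquires a canonical coboundary-functor structure (one can transport $\sigma$ the other way along $H$ and check directly that $H$ is a coboundary functor, and that the unit and counit of the adjoint equivalence are monoidal, hence coboundary, transformations). Therefore $G$ is an equivalence in $\mathbf{CobCat}$ between $(C',\sigma')$ and $(C,\sigma)$, completing the argument.

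The main obstacle will be the verification of the coboundary hexagon for $\sigma'$: it is the one axiom involving triple tensor products, so it is the only place where one must track the monoidal constraints of $G$ through a nontrivial diagram chase. Everything else is formal once one commits to reflecting equations along the fully faithful strong monoidal functor $G$. I would also remark that this proposition is an instance of the general fact that pseudo-algebras for a $2$-monad are equivalent to strict ones, applied to the (non-strict) $2$-monad whose strict algebras are strict coboundary categories; but the hands-on argument above is more transparent and self-contained.
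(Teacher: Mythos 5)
Your proposal is correct and takes essentially the same route as the paper: strictify the underlying monoidal category by coherence, upgrade to an adjoint monoidal equivalence, transport the commutor across it (your implicit definition of $\sigma'$ via full faithfulness of $G$ yields the same morphism as the paper's explicit composite through the unit and the tensor constraints), and verify the coboundary axioms by reflecting them along the equivalence. The paper likewise leaves the axiom checks and the coboundary-functor structure on the pseudo-inverse to the reader.
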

\begin{proof}
Consider the underlying monoidal category of $(C, \sigma)$, which we will just write as $C$.  We can find a strict monoidal category $C'$ by coherence for monoidal categories together with an equivalence, as monoidal categories, between $C$ and $C'$.  By standard methods, this can be improved to an adjoint equivalence between $C$ and $C'$ in the 2-category of monoidal categories, strong monoidal functors, and monoidal transformations.  Let $F: C \rightarrow C', G:C' \rightarrow C$ be the functors in this adjoint equivalence, and $\eta: 1 \Rightarrow FG$ the unit (which we note for emphasis is invertible).  For objects $x,y \in C'$, we define a commutor $\sigma'$ for $C'$ as the composite
\[
\begin{array}{rcl}
xy & \stackrel{\eta \otimes \eta}{\longrightarrow} & FGxFGy \\
& \cong & F(GxGy) \\
& \stackrel{F\sigma}{\longrightarrow} & F(GyGx) \\
& \cong  & FGyFGx \\
& \stackrel{\eta^{-1} \otimes \eta^{-1}}{\longrightarrow} & yx.
\end{array}
\]
We then leave to the reader the computations to show that $\sigma'$ is a commutor for $C'$ and that $F,G$ become coboundary functors using $\sigma'$.
\end{proof}

We now turn to the operadic description of strict coboundary categories; we note from this point onwards, all our coboundary categories are assumed to be strict.

\begin{Defi}
Fix $n>1$, and let $1 \leq p < q \leq n$, $1 \leq k < l \leq n$.
\begin{enumerate}
\item $p<q$ is \textit{disjoint} from $k<l$ if $q<k$ or $l<p$.
\item $p<q$ \textit{contains} $k<l$ if $p \leq k < l \leq q$.
\end{enumerate}
\end{Defi}

\begin{Defi}
Let $1 \leq p < q \leq n$, and define $\hat{s}_{p,q} \in \Sigma_{n}$ to be the permutation defined below.
\[
\begin{array}{r|ccccccccccccc}
i & 1 & 2 & \cdots & p-1 & p & p+1 & p+2 & \cdots & q-1 & q & q+1 & \cdots & n \\
\hat{s}_{p,q}(i) & 1 & 2 & \cdots & p-1 & q & q-1 & q-2 & \cdots & p+1 & p & q+1 & \cdots & n
\end{array}
\]
\end{Defi}

\begin{Defi}\label{Defi:defcactus}
Let $J_{n}$ be the group generated by symbols $s_{p,q}$ for $1 \leq p < q \leq n$ subject to the following relations.
\begin{enumerate}
\item For all $p < q$, $s_{p,q}^{2}=e$.
\item If $p<q$ is disjoint from $k<l$, then $s_{p,q}s_{k,l} = s_{k,l}s_{p,q}$.
\item If $p<q$ contains $k<l$, then $s_{p,q}s_{k,l} = s_{m,n}s_{p,q}$ where
\begin{itemize}
\item $m = \hat{s}_{p,q}(l)$ and
\item $n = \hat{s}_{p,q}(k)$.
\end{itemize}
\end{enumerate}
\end{Defi}

It is easy to check that the elements $\hat{s}_{p,q} \in \Sigma_{n}$ satisfy the three relations in  \cref{Defi:defcactus}, so $s_{p,q} \mapsto \hat{s}_{p,q}$ extends to a group homomorphism $\pi_{n}:J_{n} \rightarrow \Sigma_{n}$.  This is the first step in proving the following.

\begin{thm}\label{J_aop}
The collection of groups $J = \{ J_{n} \}$ form an action operad.
\end{thm}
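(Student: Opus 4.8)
The plan is to apply the characterization of action operads in \cref{thm:charAOp}. The groups $J_n$ and the homomorphisms $\pi_n\colon J_n\to\Sigma_n$ built just above are already in hand, so what remains is to produce block-sum homomorphisms $\beta\colon J_{k_1}\times\cdots\times J_{k_n}\to J_{\underline k}$ and diagonal functions $\delta_{n;k_1,\ldots,k_n}\colon J_n\to J_{\underline k}$ and to verify the nine axioms. Throughout write $K_j=k_1+\cdots+k_j$ with $K_0=0$, and use the convention $s_{a,a}=e$.

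For $\beta$, send the generator $s_{p,q}$ sitting in the $i$-th factor $J_{k_i}$ to $s_{p+K_{i-1},\,q+K_{i-1}}\in J_{\underline k}$. After the index shift the three defining relations of $J_{k_i}$ become instances of the same relations in $J_{\underline k}$, so this is a well-defined homomorphism on each factor; the images of distinct factors are supported on the disjoint intervals $[K_{i-1}+1,K_i]$, hence commute by relation (2), so $\beta$ extends to the product. It visibly covers the block sum of permutations, which is axiom \eqref{eq1}, and axioms \eqref{eq2}, \eqref{eq3} are immediate.

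For $\delta$, the underlying permutation of $\delta_{n;k_1,\ldots,k_n}(s_{p,q})$ must be $\delta_{n;k_1,\ldots,k_n}(\hat s_{p,q})\in\Sigma_{\underline k}$, which reverses the order of the blocks $B_p,\ldots,B_q$ while preserving each block internally; the element of $J_{\underline k}$ I would use to realize this is
\[
\delta_{n;k_1,\ldots,k_n}(s_{p,q}) \;=\; s_{K_{p-1}+1,\,K_q}\,\prod_{i=p}^{q} s_{K_{i-1}+1,\,K_i},
\]
the product being unambiguous since its factors have disjoint supports, and $\pi$ carries it to $\delta(\hat s_{p,q})$ by a direct check. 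Since $\delta$ is only a function of sets, I would extend it to $J_n$ word by word: along $s_{p_1,q_1}\cdots s_{p_\ell,q_\ell}$ it multiplies the values on the individual letters after reshuffling the target block sizes by the relevant $\hat s_{p_j,q_j}$'s --- the twisted multiplicativity forced by axiom \eqref{eq6}. The crux is that this is independent of the word, i.e.\ respects the three cactus relations; for relation (1), say, it reduces to the identity $\delta_{n;k'}(s_{p,q})=\delta_{n;k}(s_{p,q})^{-1}$ with $k'$ the reshuffle of $k$ by $\hat s_{p,q}$, which follows from relation (3). Relation (3) together with (2) also powers the mixed axioms \eqref{eq8}, \eqref{eq9}, while \eqref{eq4}, \eqref{eq5}, \eqref{eq7} are direct checks on generators. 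One can note afterwards that the operad composition $\mu(g;h_1,\ldots,h_n)=\delta(g)\beta(h_1,\ldots,h_n)$ so obtained is the $n$-fruit cactus operad of \cite{hk-cobound}.

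I expect the main obstacle to be twofold. First, because $\delta$ is merely a function, pinning it down on a general group element and proving the word-by-word prescription well defined requires an induction on word length that must be organized carefully against the three cactus relations --- particularly against the ``contains'' relation, which is the one governing how nested interval reversals interact. Second, axiom \eqref{eq6} itself is fiddly: the hypothesis $k_i=j_{\pi(h)^{-1}(i)}$ means one must track how the target block sizes are permuted by $\pi(h)=\hat s_{p',q'}$ when $h=s_{p',q'}$, and similar bookkeeping resurfaces in \eqref{eq8}. (Alternatively one could transport an operad structure onto $J$ from a topological model, but the combinatorial verification via \cref{thm:charAOp} is self-contained and matches the methods of the rest of the paper.)
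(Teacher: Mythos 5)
Your proposal follows essentially the same route as the paper: invoke \cref{thm:charAOp}, define $\beta$ by shifting generators onto disjoint intervals, define $\delta$ on a generator $s_{p,q}$ by exactly the formula $s_{K_{p-1}+1,K_q}\prod_{i=p}^{q}s_{K_{i-1}+1,K_i}$ (which is the paper's $\beta(e_A,\,m_{k_p+\cdots+k_q}\cdot\beta(m_{k_p},\ldots,m_{k_q}),\,e_B)$ unpacked), extend to words via the twisted multiplicativity of axiom \eqref{eq6} and check well-definedness against the three cactus relations, and then verify the remaining axioms on generators. The points you flag as delicate (well-definedness of the word-by-word extension and the block-size bookkeeping in axioms \eqref{eq6} and \eqref{eq8}) are precisely where the paper's proof spends its effort, so the plan is sound and matches the paper's argument.
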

\begin{proof}
We will use \cref{thm:charAOp} to determine the rest of the action operad structure.  Thus we must give, for any collection of natural numbers $n, k_{1}, \ldots, k_{n}$ and $\underline{k} = \sum k_{i}$, group homomorphisms $\beta:J_{k_{1}} \times \cdots \times J_{k_{n}} \rightarrow J_{\underline{k}}$ and functions $\delta: J_{n} \rightarrow J_{\underline{k}}$ satisfying nine axioms.    We define both of these on generators, starting with $\beta$.

Let $s_{p_{i}, q_{i}} \in J_{k_{i}}$.  Let $r_{i} = k_{1} + k_{2} + \cdots + k_{i-1}$.  Define $\beta$ by
\[
\beta(s_{p_{1}, q_{1}}, \ldots, s_{p_{n}, q_{n}}) = s_{p_{1}, q_{1}} s_{p_{2}+r_{2}, q_{2}+r_{2}} \cdots s_{p_{n}+r_{n}, q_{n}+r_{n}}.
\]
Note that $s_{p_{i}+r_{i}, q_{i}+r_{i}}$ and $s_{p_{j}+r_{j}, q_{j}+r_{j}}$ are disjoint when $i \neq j$.
%
It is easy to check that this disjointness property ensures that $\beta$ gives a well-defined group homomorphism
\[
J_{k_{1}} \times \cdots \times J_{k_{n}} \rightarrow J_{\underline{k}}.
\]

To define $\delta: J_{n} \rightarrow J_{\underline{k}}$ for natural numbers $n, k_{1}, \ldots, k_{n}$ and $\underline{k} = \sum k_{i}$, let $m_{k} = s_{1,k} \in J_{k}$.  Then we start by defining
\[
\delta(m_{n}) = m_{\underline{k}} \cdot \beta(m_{k_{1}}, m_{k_{2}}, \ldots, m_{k_{n}}).
\]
Note that, by the containment relation, this is equal to
\[
\beta(m_{k_{n}}, m_{k_{n-1}}, \ldots, m_{k_{1}}) \cdot m_{\underline{k}}.
\]
Now $s_{p,q} \in J_{n}$ is equal to $\beta(e_{p-1}, m_{q-p+1}, e_{n-q})$ (here $e_{i}$ is the identity element in $J_{i}$) by definition of the $m_{i}$ and $\beta$, so we can define $\delta$ on any generator $s_{p,q}$ by
\[
\delta(s_{p,q}) = \beta ( e_{A}, M, e_{B} )
\]
with
\begin{itemize}
\item $A = k_{1} + k_{2} + \cdots + k_{p-1}$,
\item $M = m_{k_{p}+ \cdots +k_{q}} \cdot \beta(m_{k_{p}}, m_{k_{p+1}}, \ldots, m_{k_{q}})$, and
\item $B = k_{q+1} + k_{q+2} + \cdots + k_{n}$.
\end{itemize}
Unpacking this yields the following formula:
\[
\resizebox{\textwidth}{!}{$\delta(s_{p,q}) = s_{k_{1}+\cdots+k_{p-1}+1, k_{1}+\cdots+k_{q}} \cdot \beta(e_{k_{1}+\cdots+k_{p-1}}, m_{k_{p}}, \ldots, m_{k_{q}}, e_{k_{q+1}+\cdots+k_{n}}).$}
\]

We extend $\delta$ to products of generators using axiom 6 of \cref{thm:charAOp}.  As before, we must check that this gives a well-defined function on products of two generators in each of the relations of the cactus groups, and we must also check that this is well-defined on products of three or more generators.  Thus we define
\[
\delta_{n; j_{i}}(gh) = \delta_{n; k_{i}}(g)\delta_{n; j_{i}}(h)
\]
where $k_{i} = j_{\pi(h)^{-1}(i)}$.  There are three relations we must verify for compatibility.
\begin{itemize}
\item We must show that $\delta_{n; j_{i}}(s_{p,q}^{2}) = e$.  By definition, we have
\[
\delta_{n; j_{i}}(s_{p,q}^{2}) = \delta_{n; k_{i}}(s_{p,q})\delta_{n; j_{i}}(s_{p,q})
\]
which is
\[
m_{\underline{j}}\beta(m_{j_{n}}, \ldots, m_{j_{1}}) m_{\underline{j}} \beta(m_{j_{1}}, \ldots, m_{j_{n}}).
\]
By the remarks above in the definition of $\delta$ and the fact that $s_{p,q}^{2}=e$, the element above is easily seen to be the identity.
\item We must show that $\delta(s_{p,q}s_{k,l}) = \delta(s_{k,l}s_{p,q})$ when $(p,q)$ is disjoint from $(k,l)$.  This is another simple calculation using the definition of $\delta$ and the disjointness of the terms involved.
\item We must show that $\delta(s_{p,q}s_{k,l}) = \delta(s_{a,b}s_{p,q})$,  where $a = \hat{s}_{p,q}(l), b = \hat{s}_{p,q}(k)$, if $p < k < l < q$.  In this case, we use all of the relations in the cactus groups to show that each side is equal to
\[
\resizebox{\textwidth}{!}{$\beta(e_{j_{1}}, \ldots, e_{j_{p-1}}, m_{j_{p}+\cdots + j_{q}} \cdot \beta (m_{j_{p}}, \ldots m_{j_{k-1}}, m_{j_{k}+ \cdots j_{l}}, m_{j_{l+1}}, \cdots, m_{j_{q}}), m_{j_{q+1}}, \ldots, m_{j_{n}}).$}
\]
\end{itemize}
In order to show that this gives a well-defined function on products of three or more generators, one proceeds inductively to show that $\delta\big((fg)h\big) = \delta\big(f(gh)\big)$ using the formula above.  This is simply a matter of keeping track of the permutations used to define the subscripts for the different $\delta$'s and we leave it to the reader.  This concludes the definition of the family of functions $\delta_{n; j_{i}}$.

There are now nine axioms to check in \cref{thm:charAOp}.  Axioms \eqref{eq1} - \eqref{eq3} all concern $\beta$, and are immediate from the defining formula.  Axiom \eqref{eq4} is obvious for the elements $m_{k}$, from which it follows in general by the formulas defining $\delta$.  For axiom \eqref{eq5}, one can check easily that
\[
\delta_{n; 1, \ldots, 1}(m_{n}) = m_{n}, \quad \delta_{1;n}(m_{n}) = m_{n}
\]
and once again the general case follows from these.  Axiom \eqref{eq6} holds by the construction of $\delta$.  Axiom \eqref{eq8} can be verified with only one $h_{i}$ nontrivial at a time, and then it is a simple consequence of the second and third relations for $J_{n}$.

Axiom \eqref{eq9} is straightforward to check when only a single $g_{i}$ is a generator and the rest are identities using the defining formulas, and the general case then follows using axiom \eqref{eq6}.  Using \eqref{eq9}, we can then prove axiom \eqref{eq7} as follows; we suppress the subscripts on different $\delta$'s for clarity.  We must show
\[
\delta_{m_1 + \cdots + m_n; p_{11}, \ldots, p_{1m_{1}}, p_{21}, \ldots, p_{nm_{m}}}\big( \delta_{n; m_{1}, \ldots, m_{n}}(f) \big) = \delta_{n; P_{1}, \ldots, P_{n}}(f),
\]
and we do so on $m_{n}$.  By definition, we have
\[
\delta \big( \delta(m_{n}) \big) = \delta \big( m_{\underline{k}} \beta(m_{k_{1}}, \ldots, m_{k_{n}}) \big),
\]
which by axiom \eqref{eq6} is equal to
\[
m_{P_{1} + \cdots P_{n}} \cdot \beta(m_{p_{11}}, \ldots, m_{p_{n,m_{n}}}) \cdot \delta\big( \beta(m_{k_{1}}, \ldots, m_{k_{n}}) \big).
\]
Now this last term is equal to $\beta \big( \delta(m_{k_{1}}), \ldots, \delta(m_{k_{n}}) \big)$ by axiom \eqref{eq9}, which is then equal to
\[
\beta \big( m_{P_{1}}\cdot \beta(m_{p_{11}}, \ldots, m_{p_{1,m_{1}}}), \ldots,  m_{P_{n}}\cdot \beta(m_{p_{n1}}, \ldots, m_{p_{1,m_{n}}}) \big).
\]
Taken all together, the left hand side of axiom \eqref{eq9} is then
\[
\resizebox{\textwidth}{!}{$m_{P_{1} + \cdots P_{n}} \cdot \beta(m_{p_{11}}, \ldots, m_{p_{n,m_{n}}}) \cdot \beta \big( m_{P_{1}}\cdot \beta(m_{p_{11}}, \ldots, m_{p_{1,m_{1}}}), \ldots,  m_{P_{n}}\cdot \beta(m_{p_{n1}}, \ldots, m_{p_{1,m_{n}}}) \big).$}
\]
All of the terms coming from an $m_{p_{ij}}$ can be collected together, and since $s_{p,q}^{2} = e$ for all $p,q$, these cancel.  This leaves
\[
m_{P_{1} + \cdots P_{n}} \cdot \beta \big( m_{P_{1}}, \ldots,  m_{P_{n}} \big)
\]
which is the right hand side of axiom \eqref{eq9} as desired.

\end{proof}

\begin{lem}
The 2-monad $C$ for strict coboundary categories is a club.
\end{lem}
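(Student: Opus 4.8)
The plan is to get this as a direct instance of \cref{pres2}: once the notion of (strict) coboundary category is exhibited as a ``type $\mathcal{M}$ of strict monoidal category'' in the sense of that corollary, it tells us that the $2$-monad $C$ is exactly $E\Lambda$ for a presented action operad $\mathbf{\Lambda}$, and \cref{thm:club=operad} (together with the construction preceding it) then gives that $E\Lambda$ is a club.

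First I would read off the presentation. A strict coboundary category is a strict monoidal category equipped with a single natural isomorphism, the commutor $\sigma_{x,y}\colon x\otimes y\to y\otimes x$; regarded as a map $x_1\otimes x_2\to x_{\tau^{-1}(1)}\otimes x_{\tau^{-1}(2)}$ its underlying permutation is the transposition $\tau=(1\,2)\in\Sigma_2$, so we take $\mathcal{G}=\{(\sigma,(1\,2))\}$, a one-element set. The two axioms defining a strict coboundary category give the set $\mathcal{R}$ with two elements. For the symmetry axiom $\sigma_{y,x}\circ\sigma_{x,y}=1_{x\otimes y}$, the composite $F$ uses two copies of $\sigma$ and $F'$ is the identity; their underlying permutations agree since $(1\,2)(1\,2)=e$. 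For the coboundary axiom --- the strict form of the hexagon, which (as already noted) reduces to a square once the monoidal structure is strict --- we have $F=\sigma_{x,z\otimes y}\circ(1_x\otimes\sigma_{y,z})$ and $F'=\sigma_{y\otimes x,z}\circ(\sigma_{x,y}\otimes 1_z)$, and a short computation with the operations $\beta,\delta$ of \cref{thm:charAOp} shows that both $F$ and $F'$ have underlying permutation $1\mapsto 3,\ 2\mapsto 2,\ 3\mapsto 1$. Hence each element of $\mathcal{R}$ satisfies the matching-of-underlying-permutations hypothesis required in \cref{pres2}.

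Applying \cref{pres2} with this $\mathcal{M}$, $\mathcal{G}$, $\mathcal{R}$ (and the maps $s_1,s_2$ it prescribes) identifies strict coboundary categories with the algebras for the club $E\Lambda$, where $\mathbf{\Lambda}$ is the action operad presented by $\mathbf{g}=\mathcal{G}$ and $\mathbf{r}=\mathcal{R}$; in particular $C\cong E\Lambda$ is a club. The only thing that genuinely requires care is the bookkeeping of the previous paragraph --- confirming that the hexagon becomes a square in the strict setting and that the two legs of each relation carry the same underlying permutation --- so there is no real obstacle here beyond invoking the machinery of Section~3. I would also remark that this argument only produces \emph{some} presented action operad $\mathbf{\Lambda}$; identifying $\mathbf{\Lambda}$ with the $n$-fruit cactus operad $J$ of \cref{J_aop} is a separate computation, to be carried out next.
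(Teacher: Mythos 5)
Your proof is correct and takes the same route as the paper, which simply cites \cref{pres2}; you have merely supplied the bookkeeping (the single generator $\sigma$ with underlying permutation $(1\,2)$, the two relations, and the check that each relation's two sides share an underlying permutation) that the paper leaves implicit as ``obvious.''
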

\begin{proof}
This is obvious by \ref{pres2}.
\end{proof}

\begin{thm}
The free coboundary category on one element, $C1$, is isomorphic to $B\mb{J} = \coprod BJ_{n}$.
\end{thm}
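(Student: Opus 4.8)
The plan is to reduce the statement to a comparison of presentations, using the machinery of Section~3 together with the explicit action operad $\mb{J}$ of \cref{J_aop}. First I would pin down the underlying category of $C1$. By \cref{pres2} the $2$-monad $C$ for strict coboundary categories is isomorphic to $E\Lambda_{\mb{C}}$, where $\mb{\Lambda}_{\mb{C}}$ is the action operad presented by the single commutor generator $\sigma$ (of arity $2$, with underlying permutation the transposition) subject to the two coboundary axioms. Hence $C1 \cong E\Lambda_{\mb{C}}(1)$, and applying \cref{hom-set-lemma} with $X = 1$, the $n$th summand $E\Lambda_{\mb{C}}(n) \times_{\Lambda_{\mb{C}}(n)} 1^{n}$ is the one-object groupoid with endomorphism group $\Lambda_{\mb{C}}(n)$, so that
\[
C1 \;\cong\; \coprod_{n \ge 0} B\Lambda_{\mb{C}}(n).
\]
Since $B\mb{J} = \coprod_{n} BJ_{n}$, it therefore suffices to produce group isomorphisms $\Lambda_{\mb{C}}(n) \cong J_{n}$ for all $n$; I will in fact produce an isomorphism of action operads $\mb{\Lambda}_{\mb{C}} \cong \mb{J}$, which is the natural way to organise the construction (the group isomorphisms are then obtained by forgetting structure).

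To build $\mb{\Lambda}_{\mb{C}} \cong \mb{J}$ I would exhibit a pair of action operad maps and check they are mutually inverse. Unwinding the proof of \cref{J_aop} shows that $\mb{J}$ is generated, as an action operad via $\beta$ and $\delta$, by the single element $m_{2} = s_{1,2} \in J_{2}$, whose underlying permutation is the transposition: indeed $m_{k} = \delta_{2;k-1,1}(m_{2})\,\beta(m_{k-1}, e_{1})$ (using $s^{2}=e$ to drop an inverse), and $s_{p,q} = \beta(e_{p-1}, m_{q-p+1}, e_{n-q})$. Since $\mb{\Lambda}_{\mb{C}}$ is freely generated by $\sigma$ modulo the coboundary relations, there is a unique candidate map $\mb{\Lambda}_{\mb{C}} \to \mb{J}$ with $\sigma \mapsto m_{2}$, and one checks the two coboundary axioms are killed in $\mb{J}$: the self-inverse axiom becomes $s_{1,2}^{2}=e$, and the coboundary square becomes an identity in $J_{3}$ following from the nested cactus relation $s_{1,3}s_{1,2} = s_{2,3}s_{1,3}$. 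Conversely I would define $\mb{J} \to \mb{\Lambda}_{\mb{C}}$ by sending $s_{p,q}$ to the endomorphism of the $(q-p+1)$-fold tensor power that reverses the block $[p,q]$, assembled recursively from the commutor in the Henriques--Kamnitzer fashion; well-definedness requires verifying the three relations of \cref{Defi:defcactus} for these reversals in an arbitrary strict coboundary category --- self-invertibility from the first coboundary axiom, commutativity of disjoint reversals from the interchange law, and the nested relation from the coboundary square. The two composites are identities since each map is determined by its value on a generator and $\sigma \mapsto m_{2} \mapsto (\text{reversal of }[1,2]) = \sigma$, and symmetrically for $m_{2}$.

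The main obstacle is showing that $\mb{J} \to \mb{\Lambda}_{\mb{C}}$ is bijective in each arity: that $\Lambda_{\mb{C}}(n)$ is \emph{generated} by the block-reversal elements, and that the cactus relations of \cref{Defi:defcactus} are the \emph{only} relations among them. Together these amount to a coherence theorem for strict coboundary categories --- every endomorphism of $*^{\otimes n}$ in the free strict coboundary category is a product of block reversals, and two such products with equal image in $\Sigma_{n}$ and equal cactus word are equal --- which is essentially the analysis of Henriques and Kamnitzer \cite{hk-cobound}. In a full write-up one would either invoke their identification of $\mathrm{End}_{C1}(*^{\otimes n})$ with $J_{n}$ directly, or reprove it by matching normal forms for morphisms in $C1$ with normal forms in $J_{n}$. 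Everything else --- the reduction $C1 \cong \coprod B\Lambda_{\mb{C}}(n)$ and the bookkeeping with $\beta$, $\delta$, and $\pi$ --- is routine given the results already established.
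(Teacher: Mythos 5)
Your proposal is correct in substance but organises the argument differently from the paper. The paper verifies the universal property of $C1$ directly: it equips $B\mb{J}$ with a strict coboundary structure (commutor $\sigma_{m,n} = s_{1,m+n}s_{1,m}s_{m+1,m+n}$) and then shows that strict coboundary functors $B\mb{J} \to X$ biject with objects of $X$, using Theorem 7 of Henriques--Kamnitzer for the existence of the functor $\overline{x}$ attached to an object $x$ (the $J_{n}$-action on $X(x^{n},x^{n})$) and the generation of $J_{n}$ by operadic composites of the commutor for its uniqueness. You instead route through Section 3: identify $C1$ with $\coprod_{n} B\Lambda_{\mb{C}}(n)$ via \cref{pres2} and \cref{hom-set-lemma}, and then build mutually inverse action operad maps between $\mb{\Lambda}_{\mb{C}}$ and $\mb{J}$. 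The underlying verifications coincide --- your map $\mb{J} \to \mb{\Lambda}_{\mb{C}}$ is exactly the content of Henriques--Kamnitzer's Theorem 7 applied to the universal example, and your map $\mb{\Lambda}_{\mb{C}} \to \mb{J}$ is the coboundary structure the paper puts on $B\mb{J}$ --- but your version makes the action operad isomorphism explicit, which is closer in form to the corollary the paper draws immediately afterwards.

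One correction to your final paragraph: the ``main obstacle'' you name is not an obstacle in your own argument. Once $\phi\colon \mb{\Lambda}_{\mb{C}} \to \mb{J}$ and $\psi\colon \mb{J} \to \mb{\Lambda}_{\mb{C}}$ are well defined and both composites are identities (which, as you say, follows from $\mb{\Lambda}_{\mb{C}}$ being generated by $\sigma$ and $\mb{J}$ by $m_{2}$), bijectivity of $\psi$ in each arity is automatic: $\psi\phi = \id$ gives surjectivity of $\psi$ and $\phi\psi = \id$ gives injectivity. The coherence statement that the cactus relations are the \emph{only} relations is an output of the argument, not an input; and the fallback you suggest --- citing an identification of $\mathrm{End}_{C1}(*^{\otimes n})$ with $J_{n}$ --- would be circular, since that identification is precisely the theorem. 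The paper deliberately uses only the two weaker inputs (the $J_{n}$-action on endomorphisms of tensor powers, and generation of $J_{n}$ by composites of $\sigma$), and your argument needs nothing more. What does deserve explicit attention is that $\psi$ must be a map of action operads, i.e.\ compatible with $\beta$, $\delta$ and $\pi$ of \cref{thm:charAOp}, not merely a family of group homomorphisms killing the relations of \cref{Defi:defcactus}; this is routine on generators but should be stated.
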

\begin{proof}
The universal property we desire is with respect to strict coboundary functors (i.e., coboundary functors whose underlying monoidal functor is strict), so we must give $B\mb{J}$ the structure of a strict coboundary category and then check that to give a strict coboundary functor $B\mb{J} \to X$ to any other strict coboundary category is the same as giving an object of $X$.

The category $B\mb{J}$ has natural numbers as objects, and addition as its tensor product.  The tensor product of two morphisms is given by $\beta$ as in \ref{J_aop}, and it is simple to check that this is a strict monoidal structure.  The commutor $\sigma_{m,n}$ is $s_{1, m+n}s_{1,m}s_{m+1,m+n}$.  Using the relations in $J_{n}$, it is clear that $\sigma_{m,n}\sigma_{n,m}$ is the identity, so we only have one more axiom to verify in order to give a coboundary structure.  By definition, this axiom is equivalent to the equation
\[
\sigma_{m, p+n}\cdot \beta(e_{m}, \sigma_{n,p}) = \sigma_{n+m,p}\cdot \beta(\sigma_{m,n},e_{p})
\]
holding for all $m,n,p$.  Each side has six terms when written out using the definitions of $\sigma$ and $\beta$, two terms on each side cancel using $s_{p,q}^{2} = e$ and the disjointness relation, and the other four terms match after using the disjointness relation.  This establishes the coboundary structure on $B\mb{J}$; note that $\sigma_{1,1} = s_{1,2}$, the nontrivial element of $J(2)$.

Every strict coboundary functor $F:B\mb{J} \to X$ determines an object of $X$ by evaluation at $1$.  Conversely, given an object $x$ of a strict coboundary category $X$, we have an action of $J_{n}$ on $X(x^{n},x^{n})$ by Theorem 7 of \cite{hk-cobound} and therefore a strict monoidal functor $\overline{x}:B\mb{J} \to X$ with $\overline{x}(1) = x$.  By construction, this strict monoidal functor is in fact a strict coboundary functor since it sends the commutor $\sigma_{1,1}$ in $B\mb{J}$ to $\sigma_{x,x}$ in $X$.  In fact, the calculations in \cite{hk-cobound} leading up to Theorem 7 show that every element of $J_{n}$ is given as an operadic composition of $\sigma$'s, so requiring $\overline{x}$ to be a strict coboundary functor with $\overline{x}(1) = x$ determines the rest of the functor uniquely.  This establishes the bijection between strict coboundary functors $F:B\mb{J} \to X$ and objects of $X$ which proves that $B\mb{J}$ is the free strict coboundary category on one object.
\end{proof}

\begin{cor}
The 2-monad $C$ for coboundary categories corresponds, using  \cref{thm:club=operad}, to the action operad $\mb{J}$.
\end{cor}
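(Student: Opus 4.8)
The plan is to feed the identification $C1 \cong B\mb{J}$ of the previous theorem into \cref{thm:club=operad}. First I would note that $C$ is within the scope of that theorem: by the lemma identifying $C$ with a club, together with \cref{pres2}, $C$ is the club $E\Lambda$ for an action operad, so its underlying category is $\coprod_n B\Lambda(n)$ --- a coproduct of one-object groupoids --- and its structure functor to $B\mb{\Sigma}$ is $B\pi$, hence the identity on objects. So \cref{thm:club=operad} applies and extracts the action operad to which $C$ corresponds, whose level-$n$ group is $C(n,n)$ computed in the underlying category of the club $C$. But for a club of the form $E\Lambda$ that underlying category is $E\Lambda$ applied to the terminal category, which here is the free strict coboundary category $C1$, and the previous theorem identifies $C1$ with $B\mb{J} = K_{\mb{J}}$.

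Under this identification the level-$n$ group becomes $C1(n,n) = J_n$; the homomorphism to $\Sigma_n$ becomes $\pi_n\colon J_n \to \Sigma_n$, since the isomorphism $C1 \cong B\mb{J}$ lies over $B\mb{\Sigma}$; and the operations $\beta$, $\delta$ read off from the club multiplication of $C$ become the tensor product of morphisms and the twisted diagonal maps of $B\mb{J}$ --- precisely the $\beta$, $\delta$ put on $\mb{J}$ in \cref{J_aop}. Hence the action operad corresponding to $C$ is $\mb{J}$. Equivalently, and perhaps more cleanly, one observes that the previous theorem upgrades to an isomorphism of clubs $C \cong K_{\mb{J}}$, and then invokes the fact that $K_{-}\colon \mb{AOp} \to \mb{Club}$ is full and faithful (\cref{thm:club=operad}), hence reflects isomorphisms, so that the action operads $\mb{J}$ and the one associated to $C$ are isomorphic in $\mb{AOp}$.

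The only genuine work is the middle claim: that the isomorphism $C1 \cong B\mb{J}$ respects club structure, not merely the underlying coboundary categories, equivalently that the club multiplication of $C$ transports to that of $K_{\mb{J}}$. This reduces to the fact already used in the proof of the previous theorem --- from the Henriques--Kamnitzer calculations \cite{hk-cobound} --- that every element of $J_n$ is an operadic composite of copies of $s_{1,2}$ and that the cactus relations are consequences of this. With that in hand, both club multiplications are pinned down by how they interact with the single generating commutor, and the isomorphism $C1 \cong B\mb{J}$ sends the generating commutor $\sigma_{1,1}$ to $s_{1,2} \in J_2$, which is by construction the commutor of the coboundary structure on $B\mb{J}$; so all the data and axioms of \cref{thm:charAOp} match on the nose.
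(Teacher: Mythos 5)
Your argument is correct and is essentially the route the paper intends: the corollary is stated without explicit proof precisely because it is the combination of the lemma that $C$ is a club, the theorem identifying $C1$ with $B\mb{J}$ as strict coboundary categories, and \cref{thm:club=operad}. Your additional care about the isomorphism respecting the club multiplication (via generation of $J_n$ by operadic composites of $s_{1,2}$, which the paper already invokes in proving $C1 \cong B\mb{J}$) fills in exactly the step the paper leaves implicit.
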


\section{Profunctors and multicategories}
In this section we generalize from operads to multicategories (or colored operads).  The notions of plain and symmetric multicategories are standard \cite{bd_hda3}, but in fact there is a corresponding notion of $\mb{\Lambda}$-multicategory for any action operad $\mb{\Lambda}$.  We will give the basic definition and then show that it arises abstractly from a lifting of $E\Lambda$ as a 2-monad  on $\mb{Cat}$ to a pseudomonad on $\mb{Prof}$, the bicategory of categories, profunctors, and transformations.  A quick treatment of similar material but restricted to the symmetric case can be found in \cite{garner_poly}.

\begin{Defi}\label{lambda_multicat}
Let $\mb{\Lambda}$ be an action operad.  A \emph{$\mb{\Lambda}$-multicategory} $M$ consists of the following data:
\begin{itemize}
\item a set of objects $M_{0}$;
\item for any finite list $x_{1}, \ldots, x_{n}$ of objects and any object $y$, a set
\[
M(x_{1}, \ldots, x_{n}; y)
\]
of multi-arrows (or just arrows) from $x_{1}, \ldots, x_{n}$ to $y$;
\item for each $\alpha \in \Lambda(n)$, an isomorphism
\[
-\cdot \alpha : M(x_{1}, \ldots, x_{n}; y) \rightarrow M(x_{\pi(g)(1)}, \ldots, x_{\pi(g)(n)}; y);
\]
\item for each object $x$, an arrow $\id_{x} \in M(x;x)$; and
\item a composition function
\[
M(y_{1}, \ldots, y_{k}; z) \times M(x_{11}, \ldots, x_{1,n_{1}}; y_{1}) \times \cdots \times M(x_{k1}, \ldots, x_{k,n_{k}}; y_{k}) \rightarrow M(\underline{x}; z)
\]
where $\underline{x} = x_{11}, \ldots, x_{1,n_{1}}, x_{21}, \ldots, x_{k,n_{k}}$, and which we write as
\[
(g; f_{1}, \ldots, f_{n}) \mapsto g(f_{1}, \ldots, f_{n}).
\]
\end{itemize}
These data are subject to the following axioms.
\begin{enumerate}
\item $\id$ is a two-sided unit:
\[
\begin{array}{rcl}
\id (f) & = & f, \\
f(\id, \ldots, \id) & = & f.
\end{array}
\]
\item Composition is associative:
\[
f\Big( g_{1}(h_{11}, \ldots, h_{1m_{1}}), \ldots, g_{n}(h_{n1}, \ldots, h_{nm_{n}}) \Big) = f(g_{1}, \ldots, g_{n})(h_{11}, \ldots, h_{nm_{n}}).
\]
\item Composition respects the group actions:
\[
\begin{array}{rcl}
f(g_{1} \cdot \alpha_{1}, \ldots, g_{n} \cdot \alpha_{n}) & = & f(g_{1}, \ldots, g_{n}) \cdot \mu^{\Lambda}(e; \alpha_{1}, \ldots, \alpha_{n}), \\
f\cdot \alpha (g_{1}, \ldots, g_{n}) & = & f(g_{\pi^{-1}(\alpha)(1)}, \ldots, g_{\pi^{-1}(\alpha)(n)}) \cdot \mu^{\Lambda}(\alpha; e_{1}, \ldots, e_{n}).
\end{array}
\]
\end{enumerate}
\end{Defi}

\begin{Defi}
Let $M, N$ be $\mb{\Lambda}$-multicategories.  A \emph{$\mb{\Lambda}$-multifunctor} $F$ consists of the following data:
\begin{itemize}
\item a function $F_{0}:M_{0} \to N_{0}$ on sets of objects and
\item functions $F:M(x_1, \ldots, x_n; y) \to N(F_{0}(x_1), \ldots, F_{0}(x_n); F_{0}(y))$ which are $\Lambda(n)$-equivariant in that $F(f \cdot \alpha) = F(f) \cdot \alpha$.
\end{itemize}
These data are subject to the following axioms.
\begin{enumerate}
\item $F$ preserves identites: $F(\id_x) = \id_{F_{0}(x)}$.
\item $F$ preserves composition: $F\Big( f(g_1, \ldots, g_n) \Big) = F(f) \Big( F(g_1), \ldots, F(g_n) \Big).$
\end{enumerate}
\end{Defi}

Recall that the bicategory $\mb{Prof}$ has objects categories, 1-cells $F:X \srarrow Y$ profunctors from $X$ to $Y$ or equivalently functors
\[
F:Y^{\textrm{op}} \times X \rightarrow \mb{Sets},
\]
and 2-cells transformations $F \Rightarrow G$.  Composition of profunctors is given by the coend formula
\[
G \circ F (z,x) = \int^{y \in Y} G(z,y) \times F(y,x)
\]
and hence is only unital and associative up to coherent isomorphism.  There is an embedding pseudofunctor $(-)^{+}: \mb{Cat} \hookrightarrow \mb{Prof}$ which is the identity on objects and sends a functor $F:X \to Y$ to the profunctor $F^{+}$ defined by $F^{+}(y,x) = Y(y,Fx)$.

\begin{thm}
The 2-monad $E\Lambda$ on the 2-category $\mb{Cat}$ lifts to a pseudomonad $\widetilde{E\Lambda}$ on the bicategory $\mb{Prof}$.
\end{thm}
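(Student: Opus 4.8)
The plan is to give $\widetilde{E\Lambda}$ explicitly on profunctors, modelled on \cref{hom-set-lemma}, verify that it is a pseudofunctor restricting to $E\Lambda$ along the embedding $(-)^{+}\cn \mb{Cat} \hookrightarrow \mb{Prof}$, and then lift the strict unit and multiplication of $E\Lambda$ to a pseudomonad structure. On objects $\widetilde{E\Lambda}$ is forced to be $E\Lambda$. Given a profunctor $F\cn X \srarrow Y$, viewed as $F\cn Y^{\mathrm{op}} \times X \to \mb{Sets}$, I would define $\widetilde{E\Lambda}(F)\cn E\Lambda X \srarrow E\Lambda Y$ to be empty between the $m$- and $n$-indexed summands when $m \neq n$, and on the $n$-indexed summands to be $E\Lambda(n) \times_{\Lambda(n)} F^{\boxtimes n}$, where $F^{\boxtimes n}\cn X^{n} \srarrow Y^{n}$ is the $n$-fold external product of $F$ and $\Lambda(n)$ acts on $X^{n}$, $Y^{n}$ through $\pi_{n}$. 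Unwinding via \cref{hom-set-lemma}, on representatives this reads
\[
\widetilde{E\Lambda}(F)\big([e; y_{1}, \ldots, y_{n}],\, [e; x_{1}, \ldots, x_{n}]\big) \;=\; \coprod_{g \in \Lambda(n)} \prod_{i=1}^{n} F\big(y_{\pi(g)(i)},\, x_{i}\big),
\]
with functoriality in the morphisms of $E\Lambda X$ and $E\Lambda Y$ dictated by exactly the bookkeeping in the proof of \cref{hom-set-lemma}. A direct comparison with the explicit description of $E\Lambda$ on functors then supplies a natural isomorphism $\widetilde{E\Lambda}(H^{+}) \cong (E\Lambda H)^{+}$, so that $\widetilde{E\Lambda}$ genuinely extends $E\Lambda$.

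Next I would establish pseudofunctoriality of $\widetilde{E\Lambda}$. Preservation of identity profunctors up to coherent isomorphism is precisely \cref{hom-set-lemma}. For composition, given $F\cn X \srarrow Y$ and $G\cn Y \srarrow Z$, both $\widetilde{E\Lambda}(G) \circ \widetilde{E\Lambda}(F)$ and $\widetilde{E\Lambda}(G \circ F)$ are assembled from coends (profunctor composition is a coend), finite products, and the free quotients $(-) \times_{\Lambda(n)} (-)$; the comparison isomorphism will come from Fubini for coends, distributivity of finite products over these colimits in $\mb{Sets}$, and --- the one genuinely combinatorial ingredient --- the identification of a length-$\underline{k}$ list carrying a $\Lambda(\underline{k})$-action, cut into $n$ blocks of lengths $k_{1}, \ldots, k_{n}$ and equipped with an outer $\Lambda(n)$-action, with the data obtained by operadic substitution. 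That identification is exactly what the $\beta,\delta$ presentation of an action operad in \cref{thm:charAOp} encodes; in effect this step re-runs the calculation that made $E\Lambda$ a $2$-monad on $\mb{Cat}$, and its coherence (associativity and unitality of the comparison) follows the same way.

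Finally I would lift the monad structure. The unit $\eta$ and multiplication $\mu$ of $E\Lambda$ are strict $2$-natural transformations, given componentwise by functors $\eta_{X}\cn X \to E\Lambda X$ and $\mu_{X}\cn E\Lambda E\Lambda X \to E\Lambda X$; I take $\eta_{X}^{+}$ and $\mu_{X}^{+}$ as the components of the pseudonatural transformations underlying the pseudomonad, the invertible associativity and unit modifications being supplied by the pseudofunctoriality constraints of $\widetilde{E\Lambda}$ from the previous step. What requires checking is pseudonaturality of $\eta^{+}$ and $\mu^{+}$ with respect to \emph{profunctors}, not just functors: for $F\cn X \srarrow Y$ one needs coherent isomorphisms $\widetilde{E\Lambda}(F) \circ \eta_{X}^{+} \cong \eta_{Y}^{+} \circ F$ and $\widetilde{E\Lambda}(F) \circ \mu_{X}^{+} \cong \mu_{Y}^{+} \circ \widetilde{E\Lambda}\widetilde{E\Lambda}(F)$. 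Since $\eta_{X}^{+}$ and $\mu_{X}^{+}$ come from functors, these composites unwind to restrictions of $\widetilde{E\Lambda}(F)$ along $\eta$ and $\mu$, and the isomorphisms fall out of the explicit formula above --- the case $n=1$ for $\eta$, and the matching of nested lists with flattened lists together with operadic multiplication in $\Lambda$ for $\mu$. The remaining pseudomonad coherence axioms then reduce, after restriction along the locally faithful embedding $(-)^{+}$ where all the data originate, to the ordinary strict monad axioms for $E\Lambda$ that already hold in $\mb{Cat}$.

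The main obstacle is the composition-comparison isomorphism for $\widetilde{E\Lambda}$ and its coherence: this is the one place where the action-operad axioms of \cref{thm:charAOp} are used in an essential way, and where care is needed with the Fubini manipulations and the interplay of the free $\Lambda(n)$-quotients with coends. A more conceptual route is available that sidesteps this bookkeeping: $E\Lambda$ is finitary and cartesian, $\mb{Prof}$ is the Kleisli bicategory of the presheaf pseudomonad on $\mb{Cat}$, and for such $2$-monads a pseudo-distributive law over the presheaf pseudomonad exists and induces the desired lift (the abstract shadow of the symmetric case in \cite{garner_poly}); its cost is tracking the coherence of the pseudo-distributive law, which is the same combinatorics in another guise.
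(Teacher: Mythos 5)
Your proposal is correct and follows essentially the same route as the paper: the same explicit formula for $\widetilde{E\Lambda}$ on profunctors (yours differs only by the harmless reindexing $g \mapsto g^{-1}$), the same use of \cref{hom-set-lemma} to see that $\widetilde{E\Lambda}(H^{+}) \cong (E\Lambda H)^{+}$, and the same choice of $\eta^{+}$ and $\mu^{+}$ for the pseudomonad structure, with the distributive-law perspective you mention appearing in the paper only as a remark afterward. If anything you are more explicit than the paper about which coherence checks (pseudofunctoriality of the lift, pseudonaturality of $\eta^{+}$ and $\mu^{+}$ against genuine profunctors) actually need to be carried out.
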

\begin{proof}
On objects, we have $\widetilde{E\Lambda}(X) = E\Lambda(X)$.  Let $F: X \srarrow Y$ be a profunctor given by the functor $F:Y^{\textrm{op}} \times X \rightarrow \mb{Sets}$.  We define $\widetilde{E\Lambda}F$ to be the functor
\[
( E\Lambda(Y) )^{\textrm{op}} \times E\Lambda(X) \rightarrow \mb{Sets}
\]
which is defined by the formulas
\[
\widetilde{\Lambda}F \big( [e; x_1, \ldots, x_n], [e; y_1, \ldots, y_m] \big) = \left\{
\begin{array}{lr}
\varnothing & \textrm{if $n \neq m$}, \\
\coprod_{g \in \Lambda(n)} \prod_{i=1}^{n} F(y_i, x_{\pi(g)(i)}) & \textrm{if $n = m$.}
\end{array}
\right.
\]
For a functor $G:X \to Y$, it is easy to check that
\[
\widetilde{E\Lambda}(G^{+}) = \big( E\Lambda G \big)^{+}
\]
using \ref{hom-set-lemma}.  The same formulas define the action of  $\widetilde{E\Lambda}$ on 2-cells as well.  The multiplication and unit of $\widetilde{E\Lambda}$ are just $\mu^{+}$ and $\eta^{+}$, where $\mu, \eta$ are the multiplication and unit, respectively, of $E\Lambda$.  The remainder of the pseudomonad data comes from the pseudofunctoriality of $(-)^{+}$, and the axioms follow from the 2-monad axioms for $E\Lambda$ and the pseudofunctor axioms for $(-)^{+}$.
\end{proof}

\begin{rem}
Since $\mb{Prof}$ is essentially the Kleisli bicategory for the free cocompletion pseudomonad, this lift corresponds to a pseudo-distributive law between $E\Lambda$ and the free cocompletion pseudomonad, but we do not pursue this perspective here.
\end{rem}

Given a bicategory $B$ and a pseudomonad $T$ on $B$, we can form the Kleisli bicategory of $T$, $\mb{Kl}_{T}$.  It has the same objects as $B$, but a 1-cell from $a$ to $b$ in  $\mb{Kl}_{T}$ is a 1-cell $f:a \rightarrow Tb$ in $B$.  In the case $B = \mb{Prof}, T = \widetilde{E\Lambda}$, the objects of $\mb{Kl}_{T}$ are categories, the 1-cells $X \srarrow Y$ are profunctors from $X$ to $E\Lambda Y$, or alternatively a functor $(E\Lambda Y)^{op} \times X \to \mb{Sets}$, and the 2-cells are natural transformation between such.

We now recall some standard definitions \cite{ben-bicats}.

\begin{Defi}
Let $B$ be a bicategory.  A \emph{monad} $(x,t,\mu,\eta)$ in $B$ consists of the following data:
\begin{itemize}
\item an object $x$,
\item a 1-cell $t: x \to x$,
\item a 2-cell $\mu:t^{2} \Rightarrow t$, and
\item a 2-cell $\eta: \id_x \Rightarrow t$.
\end{itemize}
These data are subject to the following axioms.
\[
\xy
(0,0)*+{(t \circ t) \circ t} ="1";
(25,0)*+{t \circ (t \circ t)} ="2";
(40,-12)*+{t \circ t} ="3";
(0,-24)*+{t \circ t} ="4";
(40,-24)*+{t} ="5";
{\ar^{\cong} "1";"2" };
{\ar^{t * \mu} "2";"3" };
{\ar^{\mu} "3";"5" };
{\ar_{\mu * t} "1";"4" };
{\ar_{\mu} "4";"5" };
(60,0)*+{\id_{x} \circ t} ="11";
(90,0)*+{t \circ t} ="12";
(90,-10)*+{t} ="13";
{\ar^{\eta * t} "11";"12" };
{\ar^{\mu} "12";"13" };
{\ar_{\cong} "11";"13" };
(60,-16)*+{t \circ \id_{x}} ="11";
(90,-16)*+{t \circ t} ="12";
(90,-26)*+{t} ="13";
{\ar^{t * \eta} "11";"12" };
{\ar^{\mu} "12";"13" };
{\ar_{\cong} "11";"13" };
\endxy
\]
\end{Defi}

\begin{Defi}
Let $(x,t,\mu,\eta), (x',t',\mu',\eta')$ be monads in $B$.  An \emph{oplax monad map} $(F, \alpha)$ from $t$ to $t'$ consists of the following data:
\begin{itemize}
\item a 1-cell $F:x \to x'$ and
\item a 2-cell $\alpha: F \circ t \Rightarrow t' \circ F$.
\end{itemize}
These data are subject to the following axioms, in which we suppress the constraints of the bicategory $B$.
\[
\xy
(0,0)*+{Ft^{2}} ="1";
(25,0)*+{t'Ft} ="2";
(40,-12)*+{t'^{2} F} ="3";
(0,-24)*+{Ft} ="4";
(40,-24)*+{t'F} ="5";
{\ar^{\alpha * t} "1";"2" };
{\ar^{t' * \alpha} "2";"3" };
{\ar^{\mu' * F} "3";"5" };
{\ar_{F * \mu} "1";"4" };
{\ar_{\alpha} "4";"5" };
(60,0)*+{F} ="11";
(90,0)*+{Ft} ="12";
(90,-10)*+{t'F} ="13";
{\ar^{F*\eta} "11";"12" };
{\ar^{\alpha} "12";"13" };
{\ar_{\eta'*F} "11";"13" };
\endxy
\]
\end{Defi}

\begin{Defi}
Let $(F,\alpha), (F', \alpha')$ be oplax monad maps from $t$ to $t'$.  A \emph{transformation of monad maps} $\Gamma: (F, \alpha) \Rightarrow (F', \alpha')$ is a 2-cell $\Gamma: F \Rightarrow F'$ such that
\[
\xy
(0,0)*+{Ft} ="1";
(40,0)*+{t'F} ="2";
(40,-12)*+{t'F'} ="3";
(0,-12)*+{F't} ="4";
{\ar^{\alpha } "1";"2" };
{\ar^{t' * \Gamma} "2";"3" };
{\ar_{\Gamma * t} "1";"4" };
{\ar_{\alpha'} "4";"3" };
\endxy
\]
commutes.
\end{Defi}

It is simple to check that monads, oplax monad maps, and transformations of monad maps form a bicategory.

\begin{thm}
There is a biequivalence between the category $\mb{\Lambda}\mbox{-}\mb{Multicat}$ of
\begin{itemize}
\item $\mb{\Lambda}$-multicategories and
\item $\mb{\Lambda}$-multifunctors, and
\end{itemize}
 the bicategory $\mb{Mnd}_{d}(\mb{Kl}_{\widetilde{E\Lambda}})$ of
\begin{itemize}
\item monads on sets (viewed as discrete categories) in $\mb{Kl}_{\widetilde{E\Lambda}}$,
\item oplax monad maps $(F, \alpha)$ between them which are isomorphic to one of the form $(f^{+}, \alpha)$ for $f:S \to T$ for some function of the underlying sets, and
\item transformations of monad maps.
\end{itemize}
Under this biequivalence, the category of $\mb{\Lambda}$-operads is equivalent to the bicategory of monads on the terminal set in $\mb{Kl}_{\widetilde{E\Lambda}}$.
\end{thm}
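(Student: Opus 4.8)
The plan is to build an explicit dictionary between the data of a monad on a discrete category in $\mb{Kl}_{\widetilde{E\Lambda}}$ and the data in \cref{lambda_multicat}, leaning throughout on the hom-set computation of \cref{hom-set-lemma}. First I would treat objects. Let $S$ be a set, viewed as a discrete category. A $1$-cell $t\colon S \to S$ in $\mb{Kl}_{\widetilde{E\Lambda}}$ is by definition a profunctor $t\colon S \srarrow E\Lambda(S)$, i.e.\ a functor $(E\Lambda S)^{\mathrm{op}} \times S \to \mb{Sets}$. Since $S$ is discrete, $E\Lambda(S) = \coprod_{n} E\Lambda(n) \times_{\Lambda(n)} S^{n}$ is the groupoid whose objects are the classes $[e; x_{1}, \ldots, x_{n}]$ of finite lists of elements of $S$ and whose morphisms $[e;\underline{x}] \to [e;\underline{y}]$ are, by \cref{hom-set-lemma}, exactly the relabellings indexed by those $g \in \Lambda(n)$ with $x_{i} = y_{\pi(g)(i)}$. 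Thus a functor $t$ as above is the same thing as a family of sets $M(x_{1}, \ldots, x_{n};y) := t([e; x_{1}, \ldots, x_{n}], y)$ together with, for each $g \in \Lambda(n)$, a functorial isomorphism $M(x_{1}, \ldots, x_{n};y) \to M(x_{\pi(g)^{-1}(1)}, \ldots, x_{\pi(g)^{-1}(n)};y)$; that is, precisely the arrow sets and group actions of a $\mb{\Lambda}$-multicategory with object set $S$.

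Next I would unpack the monad structure. One computes the Kleisli composite $t \circ t$, namely the profunctor $\mu^{+} \circ \widetilde{E\Lambda}(t) \circ t$, by combining the coend formula for composition in $\mb{Prof}$, the formula defining $\widetilde{E\Lambda}$ on profunctors (given in the proof that $E\Lambda$ lifts to a pseudomonad on $\mb{Prof}$), and the definition of $\mu^{+}$. Using \cref{hom-set-lemma} to evaluate the hom-sets of $E\Lambda(S)$ and $E\Lambda^{2}(S)$, the iterated coends collapse and $(t \circ t)([e;\underline{x}], z)$ becomes the quotient, by the group actions, of $\coprod M(y_{1}, \ldots, y_{k}; z) \times \prod_{j} M(x_{j1}, \ldots, x_{jn_{j}}; y_{j})$ over intermediate lists $(y_{1}, \ldots, y_{k})$ --- exactly the source of the multi-composition operation. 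A monad structure on $t$ then amounts to a $2$-cell $\mu\colon t \circ t \Rightarrow t$ and a $2$-cell $\eta\colon \mathrm{id}_{S} \Rightarrow t$; the unit $1$-cell $\mathrm{id}_{S} = (\eta_{S})^{+}$ has $\mathrm{id}_{S}([e;\underline{x}], y)$ nonempty only for one-element lists, so $\eta$ selects the identity arrows $\id_{x} \in M(x;x)$, while $\mu$ supplies the composition $(g; f_{1}, \ldots, f_{n}) \mapsto g(f_{1}, \ldots, f_{n})$. That $\mu$ is a well-defined morphism out of the above coend already amounts to equivariance axiom~(3) of \cref{lambda_multicat}, the monad associativity $2$-cell axiom becomes associativity of composition, axiom~(2), and the two monad unit $2$-cell axioms become the two equations of axiom~(1). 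Run backwards, the same formulas turn a $\mb{\Lambda}$-multicategory $M$ on $S$ into a monad $t_{M}$, and $t_{M}$ recovers $M$.

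For morphisms, a $\mb{\Lambda}$-multifunctor $F\colon M \to N$ gives the function $f = F_{0}\colon S = M_{0} \to N_{0} = T$ and hence a Kleisli $1$-cell $f^{+}\colon S \to T$, and the $\Lambda(n)$-equivariance of the maps $F\colon M(\underline{x};y) \to N(f\underline{x};fy)$ is exactly what is needed for them to assemble into a $2$-cell $\alpha\colon f^{+} \circ t_{M} \Rightarrow t_{N} \circ f^{+}$; the oplax monad map axioms for $(f^{+}, \alpha)$ then unwind to the two $\mb{\Lambda}$-multifunctor axioms. Conversely, an oplax monad map $(F, \alpha)$ with $F$ isomorphic to some $f^{+}$ --- which, since $T$ is discrete, determines $f$ uniquely --- transports to such a pair, hence to a $\mb{\Lambda}$-multifunctor. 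One then checks that the transformations of monad maps are exactly what is needed for each hom-category of $\mb{Mnd}_{d}(\mb{Kl}_{\widetilde{E\Lambda}})$ to be equivalent to the discrete category of $\mb{\Lambda}$-multifunctors, so that, together with essential surjectivity of $M \mapsto t_{M}$ on objects and the (pseudo)functoriality of the two assignments, one obtains the biequivalence. The final clause is the case $S = T = 1$: a $\mb{\Lambda}$-operad is precisely a one-object $\mb{\Lambda}$-multicategory, and for monads on the terminal set the only $1$-cell of the form $f^{+}$ is the identity, so the $d$-restriction is vacuous and the biequivalence degenerates to the asserted equivalence of categories.

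I expect the main obstacle to be the coend computation in the second step: evaluating the iterated coends defining $t \circ t$ against the \emph{pseudo}monad structure of $\widetilde{E\Lambda}$, keeping the associativity and unit constraints of $\mb{Prof}$ under control, and verifying that the pseudomonad axioms in $\mb{Kl}_{\widetilde{E\Lambda}}$ reproduce the \emph{strict} multicategory axioms --- in particular that the variance inherent in the coend over $E\Lambda(S)$ is precisely equivariance axiom~(3) of \cref{lambda_multicat}, with no extra twist.
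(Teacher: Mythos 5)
Your proposal is correct and follows essentially the same route as the paper: unpack a Kleisli monad on a discrete category into the arrow sets, actions, units and composition of a $\mb{\Lambda}$-multicategory via \cref{hom-set-lemma} and the coend formula, match oplax monad maps of the form $(f^{+},\alpha)$ with $\mb{\Lambda}$-multifunctors, and invoke local essential discreteness to get the biequivalence. The only organizational difference is that the paper proves the local essential discreteness of $\mb{Mnd}_{d}(\mb{Kl}_{\widetilde{E\Lambda}})$ \emph{first} (via the explicit empty/nonempty analysis of 2-cells $f^{+}\Rightarrow g^{+}$) and then uses it to dispose of all pseudofunctoriality constraints and coherence checks at once, whereas you defer that step to the end; it is worth making that argument explicit since it is what lets you avoid the delicate coend bookkeeping you flag as the main obstacle.
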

\begin{proof}
First, we note that $\mb{Mnd}_{d}(\mb{Kl}_{\widetilde{E\Lambda}})$ is a locally essentially discrete bicategory, by which we mean the hom-categories are all equivalent to discrete categories.  We will show there is a unique isomorphism or no 2-cell at all between oplax monad maps of the form $(f^{+}, \alpha)$, from which the claim follows in general.  A 2-cell between such has as its data a natural transformation $\gamma: f^{+} \Rightarrow g^{+}$ which has components
\[
\gamma_{[e; t_1, \ldots, t_n], s}:f^{+}([e; t_1, \ldots, t_n], s) \to g^{+}([e; t_1, \ldots, t_n], s).
\]
Both of these sets are empty unless $n=1$, and then the source is nonempty when $f(s) = t$ and the target is nonempty when $g(s)=t$; when nonempty, both of these sets are singletons.  If both are nonempty for some $s$, then the functions $f,g$ agree on $s$.  Assume the target is nonempty for some $([e;t], s)$ but that the source is empty, in other words that $g(s)=t$ but $f(s) \neq t$.  Then consider $\gamma_{[e;f(s)], s}$.  Its source is $f^{+}([e;f(s)], s)$ which is nonempty by construction, but its target is $g^{+}([e;f(s)], s)$.  We know that $g(s) = t \neq f(s)$, so $g^{+}([e;f(s)], s)$ must be empty, giving a map from a nonempty set to an empty one, a contradiction.  Thus there is a at most one 2-cell from an oplax monad map $(f^{+}, \alpha)$ to another $(g^{+}, \beta)$, such a map can only exist if $f = g$, and if it does exist then it is invertible.  Thus the hom-categories of $\mb{Mnd}_{d}(\mb{Kl}_{\widetilde{E\Lambda}})$ are essentially discrete, and this bicategory is equivalent to a category.

We begin by describing an object of $\mb{Mnd}_{d}(\mb{Kl}_{\widetilde{E\Lambda}})$ which is a monad in $\mb{Kl}_{\widetilde{E\Lambda}}$ whose underlying category is a set $S$.  A 1-cell $M:S \srarrow S$ is then a functor $(E\Lambda S)^{op} \times S \to \mb{Sets}$ which amounts to sets $M(s_1, \ldots, s_n; s)$ for $s_1, \ldots, s_n, s \in S$ together with a right action of $\Lambda(n)$ as in \ref{lambda_multicat}.  A 2-cell $1_{S} \Rightarrow M$ consists of a $\Lambda(1)$-equivariant function $\Lambda(1) \to M(s;s)$ for each $s \in S$, in other words an element $\id_{s} \in M(s;s)$.  A 2-cell $M \circ M \Rightarrow M$ then consists of a multicategorical composition function, as in \ref{lambda_multicat}, with appropriate equivariance built in by the coend used for composition of profunctors.  Associativity and unit conditions are then seen to be the same as for $\mb{\Lambda}$-multicategories.

By definition, an oplax monad map $(f^{+}, \alpha): (S,M) \to (S', M')$ consists of a function $f:S \to S'$ and a transformation $\alpha: M \circ f^{+} \Rightarrow f^{+} \circ M'$ satisfying two axioms.  The transformation $\alpha$ amounts to giving $\Lambda(n)$-equivariant functions
\[
M(s_1, \ldots, s_n; s) \to M'\big(f(s_1), \ldots, f(s_n); f(s)\big),
\]
and the two axioms correspond to the unit and composition axioms for a $\mb{\Lambda}$-multifunctor.

These descriptions give the action on objects and morphisms of a pseudofunctor $\mb{\Lambda}\mbox{-}\mb{Multicat} \to \mb{Mnd}_{d}(\mb{Kl}_{\widetilde{E\Lambda}})$ with local contractibility providing the pseudofunctoriality constraints as well as showing that the axioms for a pseudofunctor hold.  It is also clear that this pseudofunctor is biessentially surjective and locally essentially surjective, so it is a biequivalence once again using local contractibility.

The final claim is then an immediate consequence of the definitions of $\mb{\Lambda}$-operad and $\mb{\Lambda}$-multicategory.
\end{proof}

\bibliographystyle{plain}
\bibliography{borel_ref}

\end{document}